\newtheorem{Theorem}{Theorem}[section]
\newtheorem{Lemma}[Theorem]{Lemma}
\newtheorem{Prop}[Theorem]{Proposition}
\newtheorem{Rem}[Theorem]{Remark}
\numberwithin{equation}{section}
\numberwithin{Theorem}{section}
\def\cL{\mathcal{L}}
\def\cZ{\mathcal{Z}}
\def\Erw{\mathbb{E}}
\def\N{\mathbb{N}}
\def\Prob{\mathbb{P}} 
\def\R{\mathbb{R}}
\def\Var{\mathbb{V}{\rm ar\,}}
\def\sfm{\mathsf{m}}
\def\bfE{\text{\rm\bfseries E}}
\def\bfe{\text{\rm\bfseries e}}
\def\bfP{\text{\rm\bfseries P}}
\def\vph{\varphi}
\def\1{\mathbf{1}}
\def\3{{\ss}}
\def\eqdist{\stackrel{d}{=}}
\def\iprob{\stackrel{\Prob}{\to}}
\def\idist{\stackrel{d}{\longrightarrow}}
\def\weakly{\stackrel{w}{\longrightarrow}}
\def\wh{\widehat}
\def\ovl{\overline}
\def\geq{\geqslant}
\def\ssy{\scriptscriptstyle}
\def\GWP{$\mathsf{GWP}$}
\def\GWPRE{$\mathsf{GWPRE}$}
\def\Geom{\mathsf{Geom}}
\def\LF{\mathsf{LF}}
\def\CLF{\mathsf{CLF}}
\def\ML{\mathsf{MiLe}}
\def\PF{\mathsf{PF}}
\def\CPF{\mathsf{CPF}}
\def\Sib{\mathsf{Sib}\hspace{.6pt}}
\def\GSib{\mathsf{GSib}\hspace{.6pt}}
\def\le{\leqslant}
\def\ge{\geqslant}
\def\cblue{\color{blue}}
\begin{document}
\mainmatter              
\title{Power-fractional distributions and branching processes}
\titlerunning{Power-fractional distributions and branching processes}  
%
\author{Gerold Alsmeyer\inst{1} \and Viet Hung Hoang\inst{2}}
\authorrunning{Gerold Alsmeyer and Viet Hung Hoang} 
%
\tocauthor{Ivar Ekeland, Roger Temam, Jeffrey Dean, David Grove,
Craig Chambers, Kim B. Bruce, and Elisa Bertino}
\institute{Inst.~Math.~Stochastics, Department
of Mathematics and Computer Science\\ University of M\"unster, Orl\'eans-Ring 10, D-48149  M\"unster, Germany\\
\email{gerolda@math.uni-muenster.de}
\and
Faculty of Fundamental Science, Industrial University of Ho Chi Minh City,\\ No. 12 Nguyen Van Bao, Ward 4, Go Vap District,\\ Ho Chi Minh City, Vietnam\\
\email{hoangviethung@iuh.edu.vn}
}

\maketitle              

\begin{abstract}
In branching process theory, linear-fractional distributions are commonly used to model individual reproduction, especially when the goal is to obtain more explicit formulas than those derived under general model assumptions. In this article, we explore a generalization of these distributions, first introduced by Sagitov and Lindo, which offers similar advantages. We refer to these as power-fractional distributions, primarily because, as we demonstrate, they exhibit power-law behavior. Along with a discussion of their additional properties, we present several results related to the Galton-Watson branching process in both constant and randomly varying environments, illustrating these advantages. The use of power-fractional distributions in continuous time, particularly within the framework of Markov branching processes, is also briefly addressed.

\keywords{Galton-Watson process, power-fractional distribution, linear-fractional distribution, power-law behavior, random environment, iterated function system, random difference equation, perpetuity, extinction probability, Yaglom-type limit law, Markov branching process}
\end{abstract}
\section{Introduction}\label{sec:1}
It is a well-known fact in the theory of branching processes that the linear-fractional distribution $\LF(a,b)$ with probability generating function (pgf)
\begin{equation*}
f(s)\ =\ 1-\left[\frac{a}{1-s}+b\right]^{-1}
\end{equation*}
for parameters $a,b>0$ with $a+b\ge 1$, is stable under iterations of $f$, namely
\begin{equation}\label{eq:def LF gf}
\frac{1}{1-f^{n}(s)}\ =\ \frac{a^{n}}{1-s}+b_{n},\quad b_{n}\,:=\,b\sum_{k=0}^{n-1}a^{k}
\end{equation}
for each $n\in\N$, and so $f^{n}$, the $n$-fold iteration of $f$, equals the pgf of $\LF(a^{n},b_{n})$. But $f^{n}$ is also the pgf of $Z_{n}$ for each $n$ if $(Z_{n})_{n\ge 0}$ is a Galton-Watson branching process with one ancestor and offspring law $\LF(a,b)$. Therefore, all $Z_{n}$ have a linear-fractional distribution, namely $Z_{n}\eqdist\LF(a^{n},b_{n})$, where $\eqdist$ means equality in law. 

\vspace{.1cm}
Sagitov and Lindo in \cite[Section\,14.2]{SagitovLindo:16} introduced an extended class of possibly defective distributions on $\N_{0}$ that are stable under iterations of their pgf's. Any such distribution, here called \emph{power-fractional law} and abbreviated as $\PF(\theta,\gamma,a,b)$, involves two further parameters $\theta\in [-1,1]\backslash\{0\}$ and $\gamma\ge 1$. Its pgf has the form
\begin{gather}
f(s)\ =\ \gamma-\left[\frac{a}{(\gamma-s)^{\theta}}+b\right]^{-1/\theta},\quad s\in [0,\gamma).
\label{eq1:def PF gf}
\end{gather}
where $\theta,\gamma,a,b$ must satisfy the conditions of one of the cases (A1--3) shown in the Table \ref{tab:1.1}.
\begin{table}
\centering
\begin{tabular}{lllll}
\hline\noalign{\smallskip}
Case &$\theta$ &$\gamma$ & \multicolumn{2}{c}{$a,\,b\qquad$}  \\
\noalign{\smallskip}\hline\noalign{\smallskip}
(A1)\hspace{.4cm} &$\theta\in (0,1]$ &$\gamma=1$ &$a,b>0$ &$a+b\ge 1$\\[1mm]
(A2) &$\theta\in (0,1]$ &$\gamma>1$ &$a\in (0,1)$\hspace{.4cm} &$\gamma^{-\theta}\le\frac{b}{1-a}\le (\gamma-1)^{-\theta}$\\[1mm]
(A3) &$\theta\in [-1,0)$\hspace{.4cm} &$\gamma\ge 1$\hspace{.4cm} &$a\in (0,1)$ &$(\gamma-1)^{|\theta|}\le\frac{b}{1-a}\le\gamma^{|\theta|}$\\
\noalign{\smallskip}\hline\noalign{\smallskip}
\end{tabular}
\caption{The constraints on the parameters $\theta,\gamma,a$ and $b$}
\label{tab:1.1}
\end{table}
The linear-fractional laws $\LF(a,b)$ are obtained for $\theta=\gamma=1$, but the subcritical ones also for $\theta=1$ and $\gamma>1$ (see Comment (d) below). With $q$ denoting the minimal fixed point of $f$ in $[0,1]$, the last constraint in (A2) and (A3) may be restated as
\begin{align*}
b\,=\,\frac{1-a}{(\gamma-q)^{\theta}}\quad\text{and}\quad q\in\,\begin{cases} [0,1]&\text{if }\gamma>1,\\ [0,1)&\text{if }\gamma=1,\end{cases}
\end{align*}
see \cite[Def.\,14.1]{SagitovLindo:16}. One can also readily check that Eq.\,\eqref{eq1:def PF gf} is equivalent to
\begin{gather}\label{eq2:def PF gf}
\frac{1}{(\gamma-f(s))^{\theta}}\ =\ \frac{a}{(\gamma-s)^{\theta}}+b,\quad s\in [0,\gamma).
\end{gather}

\vspace{.1cm}
The parameter $\theta=0$ appears as a limiting case, namely,
\begin{gather}
f(s)=\gamma-(\gamma-q)^{1-a}(\gamma-s)^{a}\label{eq3:def PF gf}
\shortintertext{or, equivalently,}
\gamma-f(s)\ =\ (\gamma-q)^{1-a}(\gamma-s)^{a},\quad s\in [0,\gamma).
\label{eq4:def PF gf}
\end{gather}
This can be deduced from \eqref{eq1:def PF gf} by a continuity argument when observing that
\begin{align*}
\lim_{\theta\to 0}\left[\frac{a}{(\gamma-s)^{\theta}}+\frac{1-a}{(\gamma-q)^{\theta}}\right]^{-1/\theta}\  =\ (\gamma-q)^{1-a}(\gamma-s)^{a}.
\end{align*}
Here the parameters are $\gamma,q$ and $a$, and they must satisfy $a\in (0,1)$ and either $\gamma=1,\,q\in [0,1)$, or $\gamma>1,\,q\in [0,1]$. 

\vspace{.1cm}
We summarize the previous definitions (without repeating the constraints on $\gamma,a,b$ and $q$) as
\begin{gather}\label{eq:compact def of f(s)}
f(s)\ =\ 
\begin{cases}
\hfill\displaystyle\gamma\,-\,\frac{\gamma-s}{\big[a+b(\gamma-s)^{\theta}\big]^{1/\theta}}&\text{if }\theta\in (0,1],\\[4mm]
\hfill\gamma\,-\,(\gamma-q)^{1-a}(\gamma-s)^{a}&\text{if }\theta=0,\\[1mm]
\gamma\,-\,\big[a(\gamma-s)^{|\theta|}+b\big]^{1/|\theta|}&\text{if }\theta\in [-1,0)
\end{cases}
\end{gather}
and also note that the iterations of $f$ are easily computed with the help of Eqs.~\eqref{eq2:def PF gf} and \eqref{eq4:def PF gf} as
\begin{gather}\label{eq:iteration of f(s)}
f^{n}(s)\ =\ 
\begin{cases}
\hfill\displaystyle\gamma\,-\,\frac{\gamma-s}{\big[a^{n}+b_{n}(\gamma-s)^{\theta}\big]^{1/\theta}}&\text{if }\theta\in (0,1],\\[4mm]
\hfill\gamma\,-\,(\gamma-q)^{1-a^{n}}(\gamma-s)^{a^{n}}&\text{if }\theta=0,\\[1mm]
\gamma\,-\,\big[a^{n}(\gamma-s)^{|\theta|}+b_{n}\big]^{1/|\theta|}&\text{if }\theta\in [-1,0).
\end{cases}
\end{gather}
for any $n\ge 1$, where $b_{n}=b\sum_{k=0}^{n-1}a^{k}$ as in \eqref{eq:def LF gf}.

\vspace{.1cm}
Due to the leading parameter $\theta$, Sagitov and Lindo \cite{SagitovLindo:16} coined the term \emph{theta-branching process} for a Galton-Watson process (\GWP) with power-fractional~off\-spring law. 

\vspace{.1cm}
We continue with some notable facts and supplements in connection with the above definitions:
\begin{itemize}\itemsep3pt
\item[(a)] As pointed out in \cite[Sect.\,1]{LindoSagitov:15}, Eq.'s\,\eqref{eq2:def PF gf} and \eqref{eq4:def PF gf} are both special cases of a general functional equation, namely
\begin{equation}\label{eq:general functional equation}
H(f(s))\ =\ aH(s)\,+\,H(f(0))
\end{equation}
where $H(s)=H_{\theta,\gamma}(s)=(\gamma-s)^{-\theta}-\gamma^{-\theta}$ in \eqref{eq2:def PF gf},\,\eqref{eq3:def PF gf} and $H(s)=H_{0,\gamma}(s)=\log\gamma-\log(\gamma-s)$ in \eqref{eq4:def PF gf}. Regarding the minimal fixed point $q$ of $f$, it follows that 
\begin{equation*}
H(q)\ =\ \begin{cases}
\displaystyle\frac{H(f(0))}{1-a}&\text{if }a\in (0,1),\\[2mm]
\hfil\infty&\text{if }a\ge 1\text{ and thus }\theta\in (0,1],\,\gamma=1,\,q=1.
\end{cases}
\end{equation*}
\item[(b)] Since $q$ equals the smallest fixed point of $f$ in $[0,1]$, it is also the extinction probability of the associated branching process and therefore the notational choice in accordance with a common convention in the branching process literature.
\item[(c)] We always have that $f(1)\le 1$, and $f(1)=1$ holds iff one of the cases shown in Table \ref{tab:1.2} occurs. Notably, $b$ is a unique function of $a$ (and $\theta,\gamma$) in the cases (A.2.1) and (A.3.1), giving
\begin{gather}
f(s)\,=\,\gamma\,-\,\bigg[\frac{a}{(\gamma-s)^{\theta}}+\frac{1-a}{(\gamma-1)^{\theta}}\bigg]^{1/\theta}\label{eq5:def PF gf}
\shortintertext{and}
f(s)\,=\,\gamma\,-\,\big[a(\gamma-s)^{|\theta|}+(1-a)(\gamma-1)^{|\theta|}\big]^{1/|\theta|},
\label{eq6:def PF gf}
\end{gather}
respectively.
\end{itemize}
\begin{table}
\vspace{-.3cm}
\centering
\begin{tabular}{lllll}
\hline\noalign{\smallskip}
Case &$\theta$ &$\gamma$ & $a$ &$b$  \\
\noalign{\smallskip}\hline\noalign{\smallskip}
(A1)\hspace{.4cm} &$\theta\in (0,1]$ &$\gamma=1$ &$a>0$ &$b>0,\,b\ge 1-a$\\[1mm]
(A2.1) &$\theta\in (0,1]$ &$\gamma>1$ &$a\in (0,1)$\hspace{.4cm} &$b=(1-a)(\gamma-1)^{-\theta}$\\[1mm]
(A3.1) &$\theta\in [-1,0)$\hspace{.4cm} &$\gamma\ge 1$\hspace{.4cm} &$a\in (0,1)$ &$b=(1-a)(\gamma-1)^{|\theta|}$\\
\noalign{\smallskip}\hline\noalign{\smallskip}
\end{tabular}
\caption{The constraints on the parameters $\theta,\gamma,a$ and $b$ that are required for $f(1)=1$.}
\label{tab:1.2}
\end{table}
\vspace{-.6cm}
\begin{itemize}\itemsep3pt
\item[] Thus, a power-fractional law can be defective or, equivalently, assign mass to $\infty$, in which case the \GWP\ with this offspring law exhibits explosion in finite time.
\item[(d)] We have already pointed out that the class of linear-fractional distributions is obtained when $\theta=\gamma=1$, i.e.
$$ \LF(a,b)\,=\,\PF(1,1,a,b)\quad\text{for all }a,b>0\text{ such that }a+b\ge 1. $$
On the other hand, the choice $\theta=1$ and $\gamma>1$ obviously also leads to linear-fractional laws. In fact, if $\LF(a,b)$ is subcritical, meaning $a>1$, then the associated pgf $f$ has the two fixed points $1$ and $\wh{q}=b^{-1}(a+b-1)>1$ (in the supercritical case $<1$ and the extinction probability), and one can readily verify that $f$ satisfies \eqref{eq2:def PF gf} not only for $\gamma=1$, but also for $\gamma=\wh{q}$. Namely,
\begin{gather*}
\frac{1}{\wh{q}-f(s)}\ =\ \frac{a^{-1}}{\wh{q}-s}\,+\,\frac{b}{a},\quad s\in [0,\wh{q}\,),
\shortintertext{and therefore}
\PF(1,1,a,b)\ =\ \PF\left(1,\frac{a+b-1}{b},\frac{1}{a},\frac{b}{a}\right)\quad\text{for all }a>1,\,b>0.
\end{gather*}
The reader can check that this aligns with (A2.1) by substituting the pair $(a^{-1},b^{-1}(a+b-1))$ for $(a,\gamma)$. Therefore, we conclude that the parametrization of a subcritical linear-fractional law within the class of power-fractional laws is not unique.
\item[(e)] Let us also mention that, if $\theta\ne 0$ and $\psi(s):=\left[a+b(\gamma-s)^{\theta}\right]^{-1/\theta}$, then
$$ f'(s)\ =\ a\psi(s)^{\theta+1}\quad\text{and}\quad f''(s)\ =\ ab(\theta+1)(\gamma-s)^{\theta-1}\psi(s)^{2\theta+1}. $$
The resulting values for $\sfm=f'(1)$ (the mean number of offspring und thus criticality parameter) and $f''(1)$ in the three proper cases (A1), (A2.1) and (A3.1) are listed in Table \ref{tab:1.3} below, keeping in mind that in the last two cases $b$ is uniquely determined by the other parameters. We see that the critical case $\sfm=1$ occurs only if $\theta>0$ and $a=1$, and the supercritical case only if $\theta>0$ and $a\in (0,1)$. This can be viewed as some evidence for Case (A1) providing the most interesting subclass of power-fractional distributions in the context of branching processes.
\begin{table}
\centering
\begin{tabular}{lllll}
\hline\noalign{\smallskip}
Case &$\theta$ &$\gamma$ & $\sfm=f'(1)$\hspace{.3cm} & $f'\hspace{-1pt}'(1)$  \\
\noalign{\smallskip}\hline\noalign{\smallskip}
(A1) &$\theta\in (0,1)$ &$\gamma=1$ &$a^{-1/\theta}$ &$\infty$\\[1mm]
(A1) &$\theta=1$ &$\gamma=1$ &$a^{-1}$ &$2a^{-2}b$\\[1mm]
(A2.1) &$\theta\in (0,1]$ &$\gamma>1$ &$a$ &$a(1-a)(1+\theta)/(\gamma-1)$\\[1mm]
(A3.1) &$\theta\in [-1,0)$ &$\gamma>1$ &$a$ &$a(1-a)(1-|\theta|)/(\gamma-1)$\\[1mm]
(A3.1)\hspace{.3cm} &$\theta\in [-1,0)$\hspace{.3cm} &$\gamma=1$\hspace{.3cm} &$a^{1/|\theta|}$ &$0$\\
\noalign{\smallskip}\hline\noalign{\smallskip}
\end{tabular}
\caption{First and second derivative of $f$ at 1 for the proper cases when $f(1)=1$.}
\label{tab:1.3}
\end{table}
\item[(f)] The \emph{Sibuya distribution} $\Sib(a)$ for $a\in (0,1)$, named after Sibuya \cite{Sibuya:79}, has pgf
\begin{gather}
f(s)\ =\ 1-(1-s)^{a},\quad s\in [0,1],\label{eq:pgf Sib}
\intertext{support $\N$ and probability mass function}
p_{n}\,=\,(-1)^{n+1}\binom{a}{n}\text{ for }n\ge 1.\label{eq:pmf Sib}
\end{gather}
It appears here as a particular power-fractional law with $\theta=0$ and, furthermore, $\gamma=1$, $a\in (0,1)$ and $q=0$, see \eqref{eq3:def PF gf}. Note also that it has mean $\sfm=\infty$. 

\vspace{.1cm}
The reason for explicitly highlighting this subclass is that, in terms of pgf's, every power-fractional law with parameter $\theta\in (0,1)$ can be expressed as a conjugation of a linear-fractional law and a Sibuya law. More specifically,
\begin{equation}\label{eq:conjugation rule with Sibuya}
h\circ f\,=\,g\circ h
\end{equation}
if, for $\theta\in (0,1)$, $f,g$ and $h$ are the pgf's of $\PF(\theta,1,a,b)$, $\LF(a,b)$ and $\Sib(\theta)$, respectively. This is easily checked and means that
\begin{equation}\label{eq2:conjugation rule with Sibuya}
\sum_{k=1}^{S}X_{k}\ \eqdist\ \sum_{k=1}^{Y}S_{k}
\end{equation}
for independent random variables $X,Y$ and $X_{n},S_{n}$, $n=1,2,\ldots$, such that the law of $X,X_{1},X_{2},\ldots$ is $\PF(\theta,1,a,b)$, the law of $Y$ is $\LF(a,b)$, and the law of $S,S_{1},S_{2},\ldots$ is $\Sib(\theta)$. The identity remains valid for $\theta=1$, because $\PF(1,1,a,b)=\LF(a,b)$ and $\Sib(1)$, with pgf $f(s)=1-(1-s)=s$, is the point measure at 1, so $S=1$ a.s.

\item[(g)] By taking a convex combination of a Sibuya distribution with the Dirac measure at 0, we stay in the subclass of power-fractional laws with parameter $\theta=0$. More specifically, defining the \emph{generalized Sibuya distribution} $\GSib(a,q)=(p_{n})_{n\ge 0}$ with parameter $(a,q)\in (0,1)\times [0,1)$ as
\begin{gather*}\label{eq:def GSib}
\GSib(a,q)\ =\ (1-(1-q)^{1-a})\delta_{0}\,+\,(1-q)^{1-a}\Sib(a),
\shortintertext{which has pgf}
f(s)\ =\ 1-(1-q)^{1-a}(1-s)^{a},\quad s\in [0,1]\label{eq:pgf GSib},
\end{gather*}
we obtain the power-fractional law with $\theta=0,\,\gamma=1,\,a\in (0,1),$ and general $q$, see again \eqref{eq3:def PF gf}. Plainly, $\GSib(a,0)=\Sib(a)$.

\vspace{.1cm}
Finally, we mention that the conjugation relation \eqref{eq:conjugation rule with Sibuya} remains valid if $h$ is the pgf of a generalized Sibuya distribution. Thus, \eqref{eq2:conjugation rule with Sibuya} also holds if the $S,S_{1},S_{2},\ldots$ have common law $\GSib(a,q)$.
\end{itemize}

After this general introduction to power-fractional distributions, we will focus for the remainder of this article on case (A1), and thus adopt the following

\vspace{.2cm}\noindent
\textit{Standing Assumption}: $\theta \in (0,1]$ and $\gamma=1$.

\vspace{.2cm}\noindent 
This assumption not only represents the most natural extension of the linear-frac\-tional case, but is also the only one that encompasses critical, subcritical,~and supercritical laws. Furthermore, some of the results we will present do not apply to other cases. With $\gamma = 1$ fixed henceforth, we will omit it from the list~of parameters of a power-fractional law, simply writing $\PF(\theta, a, b)$ to refer to $\PF(\theta,1,a,b)$.


\section{Power-law behavior}

This section presents some basic properties of a power-fractional law $(p_{n})_{n\ge 0}=\PF(\theta,a,b)$ for $\theta\in (0,1]$ and $a,b>0$ with $a+b\ge 1$. Our main result will demonstrate that, for $\theta<1$, this law is essentially a power law, meaning that $p_{n}$ behaves like $n^{-(2+\theta)}$ as $n\to\infty$. The precise statement of this result can be found in Theorem \ref{thm:asymptotics pn} below. The following proposition by Sagitov and Lindo \cite[Prop. and Cor.\,14.1]{SagitovLindo:16} is not only essential for its proof, but also of interest in its own right. In particular, it shows that the function $f$ defined by \eqref{eq1:def PF gf} for $\gamma=1$ and $\theta\in (0,1]$ is indeed the pgf~of a proper probability distribution $(p_{n})_{n\ge 0}$ on $\N_{0}$. Moreover, it reveals the monotonicity of the $p_{n}$ and provides a formula  for them, see \eqref{eq2:formula pn}. This formula plays a crucial role in the proof of Theorem \ref{thm:asymptotics pn}.

\begin{Prop}
Fixing $\theta\in (0,1]$ and $a,b>0$ with $a+b\ge 1$, we define nonnegative numbers $c_{n,i}$ for $n\ge 2$ and $i=0,\ldots n$ as follows:
\begin{gather}
c_{2,1}=1+\theta\quad\text{and}\quad c_{n,0}=c_{n,n}=0\quad\text{for all }n\ge 2\label{eq1:c_n,i}
\intertext{and, recursively,}
c_{n,i}=(n-2-i\theta)c_{n-1,i}+(1+i\theta)c_{n-1,i-1}\label{eq2:c_n,i}
\end{gather}
for $n\ge 3$ and $i=1,\ldots,n-1$. Then $f(s)=\sum_{n\ge 0}p_{n}s^{n}$, where
\begin{gather}
p_{0}\,=\,1-\frac{1}{(a+b)^{1/\theta}},\quad p_{1}\,=\,\frac{a}{(a+b)^{1+1/\theta}}\label{eq1:formula pn}
\shortintertext{and}
p_{n}\,=\,\frac{p_{1}}{n!}\sum_{i=1}^{n-1}\left(\frac{a}{a+b}\right)^{i}c_{n,i}\label{eq2:formula pn}
\end{gather}
for $n\ge 2$. Moreover, $p_{n}\ge p_{n+1}$ for all $n\ge 1$.
\end{Prop}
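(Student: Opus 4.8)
The plan is to extract the coefficients of $f$ by successive differentiation, using the compact form $f(s) = 1 - (1-s)[a + b(1-s)^{\theta}]^{-1/\theta}$ from \eqref{eq:compact def of f(s)} with $\gamma = 1$. Write $\psi(s) := [a + b(1-s)^{\theta}]^{-1/\theta}$, so that $f(s) = 1 - (1-s)\psi(s)$ and $f'(s) = \psi(s) + (1-s)\psi'(s)$. A direct computation (this is essentially Comment (e) specialized to $\gamma=1$) gives $f'(s) = a\,\psi(s)^{\theta+1}$; this is the cleanest starting point, because it shows $f'$ is (a constant times) a power of $\psi$, and powers of $\psi$ are closed under differentiation in a controlled way. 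First I would verify $p_0 = f(0) = 1 - (a+b)^{-1/\theta}$ and $p_1 = f'(0) = a(a+b)^{-(1+1/\theta)}$ directly, establishing \eqref{eq1:formula pn}.

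The main step is to show that the higher derivatives of $f$ have the form $f^{(n)}(s) = p_1 \cdot n! \cdot \text{(something)} / \dots$ — more precisely, I would prove by induction on $n \ge 2$ that
\begin{equation*}
f^{(n)}(s) \;=\; a\,(1-s)^{-n+1}\sum_{i=1}^{n-1} c_{n,i}\,\Big(b(1-s)^{\theta}\Big)^{?}\,\psi(s)^{?}
\end{equation*}
for suitable exponents; the correct bookkeeping is to track $f^{(n)}(s)$ as a linear combination, with coefficients $c_{n,i}$, of terms of the form $(1-s)^{-(n-1)+i\theta}\,\psi(s)^{1 + i\theta + \dots}$. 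Differentiating one such term produces exactly two new terms: differentiating the $(1-s)$ power contributes a factor proportional to $(n-2-i\theta)$, and differentiating $\psi(s)$ (using $\psi'(s) = b(1-s)^{\theta-1}\psi(s)^{\theta+1}$, which raises the index $i$ by one) contributes a factor proportional to $(1+i\theta)$. This reproduces precisely the recursion \eqref{eq2:c_n,i}, with base case $f''(s)$ giving $c_{2,1} = 1+\theta$ and the boundary coefficients $c_{n,0} = c_{n,n} = 0$ accounting for the fact that the sum runs only over $i = 1,\dots,n-1$. Evaluating at $s = 0$ and using $\psi(0) = (a+b)^{-1/\theta}$ collapses every term to the common factor $(a+b)^{-i - 1/\theta - 1}$ times $b^i$... wait — one must check the powers of $a$ and $b$ combine as $(a/(a+b))^i$; this falls out once $\psi(0)^{\text{index}} = (a+b)^{-(\text{index})/\theta}$ is substituted and the powers are matched against $p_1 = a(a+b)^{-1-1/\theta}$. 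Dividing by $n!$ yields \eqref{eq2:formula pn}.

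For the monotonicity claim $p_n \ge p_{n+1}$ for $n \ge 1$, I would argue from \eqref{eq2:formula pn} by comparing the two sums. Since all $c_{n,i} \ge 0$ (which itself needs the observation that the multiplier $n - 2 - i\theta$ in \eqref{eq2:c_n,i} is nonnegative whenever $c_{n-1,i} \ne 0$, i.e. for $i \le n-3$, because then $i\theta \le (n-3)\theta \le n-3 < n-2$; an induction delivers $c_{n,i} \ge 0$), the quantity $p_n$ is a nonnegative combination of the ratios $(a/(a+b))^i$. The cleanest route is probably to show directly that $n!\,p_n / p_1 \ge (n+1)!\,p_{n+1}/p_1 \cdot (\text{correction})$ is the wrong normalization, and instead to exhibit a termwise relation between $c_{n+1,i}/(n+1)!$ and $c_{n,i}/n!$ using \eqref{eq2:c_n,i} together with the inequality $a/(a+b) < 1$; concretely, $\frac{c_{n+1,i}}{n+1} = \frac{(n-1-i\theta)c_{n,i} + (1+i\theta)c_{n,i-1}}{n+1}$, and one bounds this against $c_{n,i}$ after summing against the weights $(a/(a+b))^i$ and shifting the index in the second term. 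The main obstacle I anticipate is precisely this last combinatorial inequality: making the index shift and the factor $a/(a+b) < 1$ interact correctly to dominate the "leakage" from the $(1+i\theta)c_{n,i-1}$ term. An alternative, and perhaps safer, fallback for monotonicity is analytic: show that $s \mapsto (1-s)f'(s) = a(1-s)\psi(s)^{\theta+1}$ has a power series with nonnegative coefficients dominated in an appropriate Abel/Tauberian sense, or more simply observe that $g(s) := \sum_{n\ge 1}(p_n - p_{n+1})s^n = (1-s)f'(s) - p_1$ wait, $\sum_{n \ge 1} p_n s^{n-1} = (f(s)-p_0)/s$ is awkward; cleaner is $\sum_{n\ge 1}(p_n-p_{n+1})s^n = \sum_{n\ge1}p_n s^n - s^{-1}\sum_{n \ge 2}p_n s^n = (1-s^{-1})(f(s)-p_0) + p_1 s$, and one shows this has nonnegative coefficients by checking $(1-s)f'(s) - p_1$ does — but the derivative route reduces to nonnegativity of the coefficients of $\psi^{\theta+1}$, which, since $\psi(s)^{\theta+1} = (a+b)^{-(\theta+1)/\theta}(1 + \frac{b}{a+b}((1-s)^\theta - 1))^{-(\theta+1)/\theta} \cdot$(a power of $(1-s)$)... this tangles up again. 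I would therefore present the combinatorial argument as the main proof and only invoke the analytic reformulation if the index-shift inequality proves stubborn.
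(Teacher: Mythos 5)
Your computational core is exactly the paper's argument: from $f'(s)=a\psi(s)^{\theta+1}$ and $\psi'(s)=b(1-s)^{\theta-1}\psi(s)^{\theta+1}$ one proves by induction that $f^{(n)}(s)=\sum_{i=1}^{n-1}ab^{i}c_{n,i}\,\psi(s)^{(i+1)\theta+1}(1-s)^{i\theta-n+1}$, the two differentiation channels producing precisely the factors $(n-1-i\theta)$ and $(1+i\theta)$ of \eqref{eq2:c_n,i}; this fixes the exponents you left as question marks, and your treatment of \eqref{eq1:formula pn} is fine. But the step you wave through (``the powers combine as $(a/(a+b))^{i}$'') is exactly the step you must carry out: evaluating at $s=0$ gives $f^{(n)}(0)=\sum_{i}ab^{i}c_{n,i}(a+b)^{-(i+1)-1/\theta}$, so the weights come out as $(b/(a+b))^{i}$, not $(a/(a+b))^{i}$. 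For $\theta=1$ one has $p_{2}=ab/(a+b)^{3}=p_{1}\,b/(a+b)$, whereas \eqref{eq2:formula pn} as printed would give $p_{1}\,a/(a+b)$; so this is a typo in the displayed formula rather than an obstacle, but a proof must state which weight it actually obtains rather than assert agreement.

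On the monotonicity $p_{n}\ge p_{n+1}$, which the paper's proof omits (the proposition is quoted from Sagitov--Lindo), the combinatorial route you sketch and then hesitate over goes through with no ``leakage'' problem. Writing $x\in(0,1)$ for the weight ratio and using \eqref{eq2:c_n,i} with the index shift in the second term, for $n\ge 2$,
\begin{equation*}
\sum_{i=1}^{n}x^{i}c_{n+1,i}\ =\ \sum_{i=1}^{n-1}\big[(n-1-i\theta)+x(1+(i+1)\theta)\big]x^{i}c_{n,i}\ \le\ (n+1)\sum_{i=1}^{n-1}x^{i}c_{n,i},
\end{equation*}
since $x(1+(i+1)\theta)\le 1+(i+1)\theta\le 2+i\theta$ for $\theta\le 1$ and the $c_{n,i}$ are nonnegative; dividing by $(n+1)!$ gives $p_{n+1}\le p_{n}$, and $p_{2}=\tfrac12 p_{1}x(1+\theta)\le p_{1}$ settles $n=1$. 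This is the same manipulation the paper performs in \eqref{eq:r_n estimated} for the stronger monotonicity of $n(n-1)p_{n}$, so your ``anticipated obstacle'' is illusory and the analytic fallback can be dropped. One small repair in your nonnegativity remark: $c_{n-1,i}$ can be nonzero up to $i=n-2$, and there $n-2-i\theta\ge (n-2)(1-\theta)\ge 0$, so the induction still delivers $c_{n,i}\ge 0$.
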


Note that, since $f(1)=1$ (as a consequence of \eqref{eq1:def PF gf}) and all $p_{n}$ are nonnegative, the proposition does indeed establish that $f$ is the pgf~of a probability distribution. Note also that the $c_{n,i}$ depend on the parameter $\theta$, but not on the parameters $a,b$.

\begin{proof}
Putting $\rho(s)=(a+b(1-s)^{\theta})^{-1/\theta}$, one can readily check that
$$ f(s)\ =\ 1-\rho(s)(1-s). $$
Since $\rho'(s)=b\rho(s)^{\theta+1}(1-s)^{1-\theta}$, it follows that
\begin{align}
f'(s)\,&=\,\rho(s)-b\rho(s)^{\theta+1}(1-s)^{\theta}\,=\,a\rho(s)^{\theta+1},
\label{eq:fprime}\\
f''(s)\,&=\,ab(\theta+1)\rho(s)^{2\theta+1}(1-s)^{\theta-1}.\label{eq:fdoubleprime}
\end{align}
We claim and prove by induction that, for all $n\ge 2$,
\begin{equation}\label{eq:formula derivative of f}
f^{(n)}(s)\ =\ \sum_{i=1}^{n-1}ab^{i}c_{n,i}\rho(s)^{(i+1)\theta+1}(1-s)^{i\theta-n+1}.
\end{equation}
By \eqref{eq:fdoubleprime}, the claim holds true for $n=2$. Assuming it holds for an arbitrary fixed $n\ge 2$ (inductive hypothesis), we differentiate to obtain:
\begin{align*}
f^{(n+1)}(s)\ &=\ \sum_{i=1}^{n-1}ab^{i}c_{n,i}b\big((i+1)\theta+1\big)\rho(s)^{(i+2)\theta+1}(1-s)^{(i+1)\theta-n}\\
&\quad+\ \sum_{i=1}^{n-1}ab^{i}c_{n,i}(n-i\theta-1)\rho(s)^{(i+1)\theta+1}(1-s)^{i\theta-n}\\
&=\ \sum_{i=2}^{n}ab^{i-1}c_{n,i-1}b\big(i\theta+1\big)\rho(s)^{(i+1)\theta+1}(1-s)^{i\theta-n}\\
&\quad+\ \sum_{i=1}^{n-1}ab^{i}c_{n,i}(n-i\theta-1)\rho(s)^{(i+1)\theta+1}(1-s)^{i\theta-n}\\
=\ \sum_{i=1}^{n}&ab^{i}\big((i\theta+1)c_{n,i-1}+(n-i\theta-1)c_{n,i}\big)\rho(s)^{(i+1)\theta+1}(1-s)^{i\theta-n}\\
=\ \sum_{i=1}^{n}&ab^{i}c_{n+1,i}\rho(s)^{(i+1)\theta+1}(1-s)^{i\theta-n},
\end{align*}
where $c_{n,0}=c_{n,n}=0$ was used in the penultimate line, and the recursive definition of the $c_{n,i}$ applied in the final one. This completes the proof of \eqref{eq:formula derivative of f}, and the values of $p_{n}=f^{(n)}(0)/n!$ are now easily computed as stated in the lemma. We omit further details.\qed
\end{proof}

Given sequences $(a_{n})_{n\ge 0}$ and $(b_{n})_{n\ge 0}$ of real numbers, we write $a_{n}\simeq b_{n}$ if $a_{n}/b_{n}\to 1$, and $a_{n}\asymp b_{n}$ if $c_{1}\le a_{n}/b_{n}\le c_{2}$ for some $c_{1},c_{2}>0$ and all sufficiently large $n$.

\begin{Theorem}\label{thm:asymptotics pn}
Let $(p_{n})_{n\ge 0}=\PF(\theta,a,b)$ for $\theta\in (0,1)$ and $a,b>0$ such that $a+b\ge 1$. Then
\begin{equation}\label{eq:magnitude behavior pn}
p_{n}\ \asymp\ n^{-(2+\theta)}\quad\text{as }n\to\infty.
\end{equation}
Furthermore, if $a(a+b)^{-1}\le\theta(1+2\theta)^{-1}$, then the sequence $(n(n-1)p_{n})_{n\ge 2}$ is nonincreasing and
\begin{equation}\label{eq:exact behavior pn}
p_{n}\ \simeq\ cn^{-(2+\theta)}\quad\text{as }n\to\infty,
\end{equation}
where $c=a^{-(\theta+1)/\theta}b(\theta+1)/\Gamma(1-\theta)$.
\end{Theorem}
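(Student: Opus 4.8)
The plan is to extract the asymptotics of $p_n$ from the explicit formula \eqref{eq2:formula pn} by controlling the inner sum $\sum_{i=1}^{n-1}(a/(a+b))^i c_{n,i}$ through a generating-function identity for the coefficients $c_{n,i}$. First I would exploit \eqref{eq:formula derivative of f}: evaluating $f^{(n)}$ via the Taylor coefficient $p_n = f^{(n)}(0)/n!$ gives $p_n = \tfrac{p_1}{n!}\sum_{i\ge 1}(a/(a+b))^i c_{n,i}$, so the whole question is the growth rate in $n$ of $Q_n(x) := \sum_{i\ge 1} c_{n,i} x^i$ at the fixed value $x = a/(a+b)\in(0,1)$. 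The recursion \eqref{eq2:c_n,i} translates into a linear ODE/recurrence for $Q_n$: multiplying \eqref{eq2:c_n,i} by $x^i$ and summing yields something of the shape $Q_n(x) = (n-2)\,Q_{n-1}(x) + \theta\,x(x-1)\,Q_{n-1}'(x) + \text{(boundary terms)}$, and since $c_{n,0}=c_{n,n}=0$ the boundary terms are controlled. Iterating the factor $(n-2)$ produces the combinatorial factor $(n-2)!$, so one expects $Q_n(x)\asymp (n-2)!\cdot r(x)$ for some function $r$, whence $p_n \asymp \tfrac{p_1 (n-2)!}{n!}\, r(a/(a+b)) \asymp n^{-2}\, r(a/(a+b))$ — but this only gives $n^{-2}$, so the $n^{-\theta}$ refinement must come from a subleading correction in the iteration, i.e. the product $\prod_{k=2}^{n}(k-2-\text{something}\cdot\theta)$ has a $\Gamma$-function correction of order $n^{-\theta}$ relative to $(n-2)!$. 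This is exactly where the constant $\Gamma(1-\theta)$ and the exponent $2+\theta$ should emerge.

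A cleaner route to the \emph{exact} asymptotics \eqref{eq:exact behavior pn} is to bypass the $c_{n,i}$ altogether and use a Tauberian/singularity-analysis argument on $f$ directly. From \eqref{eq:compact def of f(s)} with $\gamma=1$ we have $1 - f(s) = (1-s)\big[a + b(1-s)^\theta\big]^{-1/\theta}$. As $s\uparrow 1$, write $u = 1-s\downarrow 0$ and expand: $[a+bu^\theta]^{-1/\theta} = a^{-1/\theta}\big(1 + (b/a)u^\theta\big)^{-1/\theta} = a^{-1/\theta}\big(1 - \tfrac{b}{a\theta}u^\theta + O(u^{2\theta})\big)$, so
\begin{equation*}
1 - f(s)\ =\ a^{-1/\theta} u\ -\ \frac{b}{\theta}\,a^{-1-1/\theta}\, u^{1+\theta}\ +\ O(u^{1+2\theta}).
\end{equation*}
Since $\sum_n p_n = 1$ and $\sum_n n p_n = \sfm = a^{-1/\theta}$ (possibly $+\infty$; here $\theta<1$ makes $f''(1)=\infty$ already signalling heavy tails), the generating function $g(s) := 1 - (1-f(s))/a^{-1/\theta} \cdot$(const)$\ldots$ — more precisely, the tail generating function $\sum_{n\ge 1}(\sum_{k>n}p_k)s^n = (1-f(s))/(1-s) \cdot(\ldots)$ — has a singularity of the form $\mathrm{const}\cdot(1-s)^\theta$ at $s=1$. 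By the standard transfer theorem (Flajolet–Odlyzko singularity analysis, or a Karamata Tauberian theorem for power series), a term $C(1-s)^\theta$ in a function analytic in a slit disc contributes coefficients $\simeq \tfrac{C}{\Gamma(-\theta)} n^{-1-\theta}$; differencing once more (to pass from tail sums to $p_n$ itself) raises the exponent to $n^{-2-\theta}$ and converts $\Gamma(-\theta)$ into $\Gamma(-\theta)\cdot\tfrac{\Gamma(2+\theta)}{\Gamma(1+\theta)}$-type constants, which after simplification using $\Gamma(1-\theta)\Gamma(\theta) = \pi/\sin(\pi\theta)$ should collapse to the stated $c = a^{-(\theta+1)/\theta} b(\theta+1)/\Gamma(1-\theta)$. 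The monotonicity claim for $(n(n-1)p_n)_{n\ge 2}$ under the hypothesis $a/(a+b)\le \theta/(1+2\theta)$ I would get from the recursion \eqref{eq2:c_n,i}: $n(n-1)p_n = p_1 (n-2)!\,\big(\ldots\big)^{-1}\sum_i x^i c_{n,i}$ — rather, directly compare $n(n-1)p_n$ with $(n+1)n p_{n+1}$ using $p_{n+1} = \tfrac{p_1}{(n+1)!}\sum_i x^i c_{n+1,i}$ and \eqref{eq2:c_n,i}, reducing the desired inequality to $\sum_i x^i\big[(n-1-i\theta)c_{n,i} + (1+i\theta)x^{-1}\cdot\text{shift}\big] \le (\ldots)$, which holds term-by-term precisely when $x(1+2\theta)\le\theta$, i.e. the stated threshold; monotonicity of $n(n-1)p_n$ then upgrades the $\asymp$ in \eqref{eq:magnitude behavior pn} to the genuine limit \eqref{eq:exact behavior pn} via the fact that a monotone sequence asymptotic to a regularly varying sequence in the Cesàro/Abel sense must be genuinely asymptotic to it.

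The main obstacle is the bookkeeping in the singularity-analysis step: one must justify that $f$ extends analytically to a $\Delta$-domain (a disc slit along $[1,\infty)$) — this is clear from the closed form $1-f(s) = (1-s)(a+b(1-s)^\theta)^{-1/\theta}$ since the only branch point in $|s|\le 1+\delta$ is at $s=1$ (one should check $a + b(1-s)^\theta$ does not vanish near the slit, which follows from $a,b>0$ and the principal branch keeping $\mathrm{Re}$ controlled) — and then to track the multiplicative constants through two rounds of coefficient extraction without sign errors, so that the Gamma factors reduce to exactly $\Gamma(1-\theta)$. The fallback, purely real-variable route through the $c_{n,i}$ recursion, avoids complex analysis but demands a delicate asymptotic analysis of the product $\prod_{k}(k - 2 - i\theta)$-type expressions; either way the $\asymp$ bound in \eqref{eq:magnitude behavior pn} is the easy part (two-sided bounds on $Q_n$ from the recursion suffice and need no sharp constants), while pinning down $c$ is where the real work lies. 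Since Sagitov–Lindo already supply \eqref{eq2:formula pn}, I would present the proof via that formula to keep the paper self-contained, using the generating-function recursion for $Q_n$ and Karamata's Tauberian theorem, and invoking singularity analysis only as a remark.
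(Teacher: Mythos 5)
Your second (complex-analytic) route is sound and genuinely different from the paper's. The paper never leaves real variables: it applies Karamata's Tauberian theorem together with a monotone-density step to $f''(s)=ab(\theta+1)(1-s)^{\theta-1}\psi(s)^{2\theta+1}$, and the needed monotonicity of $n(n-1)p_{n}$ is obtained from \eqref{eq2:formula pn} and \eqref{eq2:c_n,i} by exactly the termwise estimate you sketch (each coefficient picks up the factor $1+\tfrac{\rho(1+\theta)}{n-1}-\tfrac{(1-\rho)i\theta}{n-1}$ with $\rho=a/(a+b)$, whence the threshold $\rho(1+2\theta)\le\theta$). Your singularity-analysis alternative would work: $1-f(s)=(1-s)\big[a+b(1-s)^{\theta}\big]^{-1/\theta}$ extends to a slit disc because $(1-s)^{\theta}$ (principal branch) has argument in $(-\theta\pi,\theta\pi)$ and so $a+b(1-s)^{\theta}$ never vanishes, and the singular term $\tfrac{b}{\theta}a^{-(\theta+1)/\theta}(1-s)^{1+\theta}$ transfers to $c\,n^{-(2+\theta)}$ since $1/\Gamma(-1-\theta)=\theta(1+\theta)/\Gamma(1-\theta)$ — so the constant does come out as stated, a check you defer ("should collapse"). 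If executed, this route would even yield \eqref{eq:exact behavior pn} for \emph{all} admissible $(a,b)$, subsuming \eqref{eq:magnitude behavior pn}; the price is the $\Delta$-domain and uniform-error bookkeeping that you only gesture at.

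The genuine gap is in the route you say you would actually present (the real-variable one via $Q_{n}$ and Karamata): there, \eqref{eq:magnitude behavior pn} outside the monotone regime is \emph{not} "the easy part". Karamata alone gives only Abel/Cesàro-type information, coefficientwise conclusions require the monotonicity you only have under the threshold, and your claim that "two-sided bounds on $Q_{n}$ from the recursion suffice" is unsupported — as your own first paragraph concedes, the raw recursion only produces the $n^{-2}$ scale, and nothing you write separates an $n^{-\theta}$ factor in both directions. The paper closes exactly this hole with a comparison argument: for the upper bound it embeds the law in a random sum, $Y=\sum_{k=1}^{N}X_{k}\eqdist\PF(\theta,a,b+\xi)$ with $N\eqdist\PF(\theta,1,\xi)$ and $\xi$ chosen so that $a/(a+b+\xi)=\theta$ falls in the monotone regime, whence $p_{n}\le\Prob(N=1)^{-1}\Prob(Y=n)$ by \eqref{eq1:formula pn}; for the lower bound it compares \eqref{eq2:formula pn} termwise with $\PF(\theta,a,\zeta)$, $a/(a+\zeta)=\theta$, using $\rho^{i}\ge\theta^{i}$ (one may assume $\rho>\theta$ there). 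You need either such a comparison or a full execution of the singularity analysis; likewise, the $Q_{n}$-recursion/ODE derivation of the $\Gamma(1-\theta)$ constant in your first paragraph is acknowledged incomplete and cannot stand as written.
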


\begin{proof}
Note that $f''(s)=\sum_{n\ge 2}n(n-1)p_{n}s^{n}$ and recall from Remark (e) in the Introduction that, in the given situation, $f''(s)$ satisfies
\begin{align*}
f''(s)\ =\ ab(\theta+1)(1-s)^{\theta-1}\psi(s)^{2\theta+1}
\end{align*}
with $\psi(s)=\big[a+b(1-s)^{\theta}]^{-1/\theta}$. Since $\psi(s)\simeq a^{-1/\theta}$ as $s\uparrow 1$, we see that
\begin{align*}
f''(s)\ \simeq\ a^{-(\theta+1)/\theta}b(\theta+1)(1-s)^{\theta-1}\quad\text{as }s\uparrow 1.
\end{align*}
Therefore, if the $n(n-1)p_{n}$ are nonincreasing, then
$$ n(n-1)p_{n}\ \simeq\ \frac{a^{-(\theta+1)/\theta}b(\theta+1)}{\Gamma(1-\theta)}\,n^{-\theta}\quad\text{as }n\to\infty $$
follows by a well-known Tauberian result by Hardy, Littlewood and Karamata (see e.g.~\cite[Cor.\,1.7.3 on p.\,40]{BingGolTeug:89}). This establishes \eqref{eq:exact behavior pn} with the given constant $c$. The required monotonicity of $n(n-1)p_{n}$ will be verified next under the assumption $\rho:=a(a+b)^{-1}\le\theta (1+2\theta)^{-1}$.

\vspace{.2cm}
It follows with the help of \eqref{eq2:formula pn} that
\begin{align*}
r_{n}\ :=\ \frac{n(n+1)p_{n+1}}{n(n-1)p_{n}}\ =\ \frac{\frac{1}{n-1}\sum_{i=1}^{n}\rho^{i}c_{n+1,i}}{\sum_{i=1}^{n-1}\rho^{i}c_{n,i}}
\end{align*}
for any $n\ge 2$. Use \eqref{eq2:c_n,i} to see that
\begin{gather}
\begin{split}\label{eq:r_n estimated}
\frac{1}{n-1}\sum_{i=1}^{n}\rho^{i}c_{n+1,i}\ &=\ \sum_{i=1}^{n-1}\frac{n-1-i\theta}{n-1}\,\rho^{i}c_{n,i}\,+\,\sum_{i=2}^{n}\frac{1+i\theta}{n-1}\,\rho^{i}c_{n,i-1}\\
&=\ \sum_{i=1}^{n-1}\left(\frac{n-1-i\theta}{n-1}+\frac{1+(i+1)\theta}{n-1}\rho\right)\rho^{i}c_{n,i}\\
&=\ \sum_{i=1}^{n-1}\left(1+\frac{\rho(1+\theta)}{n-1}-\frac{(1-\rho)i\theta}{n-1}\right)\rho^{i}c_{n,i}\\
&\le\ \left(1-\frac{\theta-\rho(1+2\theta)}{n-1}\right)\sum_{i=1}^{n-1}\rho^{i}c_{n,i},
\end{split}
\shortintertext{and therefore}
r_{n}\ \le\ 1-\frac{\theta-\rho(1+2\theta)}{n-1}\nonumber
\end{gather}
for $n\ge 2$, with strict inequality for $n\ge 3$.
This shows the asserted monotonicity of the sequence $(n(n-1)p_{n})_{n\ge 2}$ when $\rho\le\theta(1+2\theta)^{-1}$.

\vspace{.2cm}
Now suppose $\rho>\theta$ for the proof of \eqref{eq:magnitude behavior pn}, which may also be stated as
$$ 0\ <\ \liminf_{n\to\infty}\frac{p_{n}}{n^{2+\theta}}\ \le\ \limsup_{n\to\infty}\frac{p_{n}}{n^{2+\theta}}\ <\ \infty. $$
Let $N$ be a random variable with law $\PF(\theta,1,\xi)$, and let $X,X_{1},X_{2},\ldots$ be iid random variables independent of $N$ and with common law $\PF(\theta,a,b)$. Then the law of $Y:=\sum_{k=1}^{N}X_{k}$ is also power-fractional, specifically
$$ Y\ \eqdist\ \PF(\theta,a,b+\xi). $$
We fix $\xi>0$ such that $a/(a+b+\xi)=\theta$, thus $b+\xi=a(1-\theta)/\theta$. Then \eqref{eq:exact behavior pn} applies to give
$$ \Prob(Y=n)\ \simeq\ \frac{a^{-1/\theta}(1-\theta)(\theta+1)}{\theta\Gamma(1-\theta)}\,n^{-(2+\theta)}\quad\text{as }n\to\infty. $$
Using $\Prob(N=1)=(1+\xi)^{-1-1/\theta}$ by \eqref{eq1:formula pn} and
$$ \Prob(Y=n)\ \ge\ \Prob(N=1,X_{1}=n)\ =\ \Prob(N=1)\,p_{n}\ =\ \frac{p_{n}}{(1+\xi)^{1+1/\theta}}, $$
we conclude
$$ \limsup_{n\to\infty}\frac{p_{n}}{n^{2+\theta}}\ \le\ \frac{a^{-1/\theta}(1-\theta)(\theta+1)(1+\xi)^{1+1/\theta}}{\theta\Gamma(1-\theta)}\ <\ \infty. $$

\vspace{.1cm}
Towards a positive lower bound for $\liminf_{n\to\infty}n^{-(2+\theta)}p_{n}$, we observe that, if $(u_{n})_{n\ge 0}=\PF(\theta,a,\zeta)$ with $\zeta>b$ is chosen such that $a/(a+\zeta)=\theta$, then, by \eqref{eq2:formula pn},
\begin{align*}
p_{n}\ =\ \frac{p_{1}}{n!}\sum_{i=1}^{n-1}\rho^{i}c_{n,i}\ \ge\ \frac{p_{1}}{n!}\sum_{i=1}^{n-1}\theta^{i}c_{n,i}\ =\ \frac{p_{1}}{u_{1}}u_{n}
\end{align*}
for all $n\ge 2$, and $p_{1}/u_{1}=(\rho/\theta)^{1+1/\theta}$ by \eqref{eq1:formula pn}. Consequently,
\begin{align*}
\liminf_{n\to\infty}\frac{p_{n}}{n^{2+\theta}}\ &\ge\ \bigg(\frac{\rho}{\theta}\bigg)^{1+1/\theta}\lim_{n\to\infty}\frac{u_{n}}{n^{2+\theta}}\\
&=\ \bigg(\frac{\rho}{\theta}\bigg)^{1+1/\theta}\frac{a^{-(\theta+1)/\theta}\zeta(\theta+1)}{\Gamma(1-\theta)}\ >\ 0,
\end{align*}
and this completes the proof of the proposition.\qed
\end{proof}

\begin{Rem}\rm
Using \eqref{eq1:c_n,i} and \eqref{eq2:c_n,i}, one can derive estimates for the $c_{n,i}$ that, with the help of \eqref{eq:r_n estimated}, show that for $\rho<\theta$, the sequence $(n(n-1)p_{n})_{n\ge 1}$ remains \emph{ultimately} nonincreasing, and thus \eqref{eq:exact behavior pn} in Theorem \ref{thm:asymptotics pn} continues to hold. However, to keep the proof concise, we have omitted the details of the~argument.
\end{Rem}


\section{Varying environment}\label{sec:2}

The composition of pgf's of power-fractional laws remains stable even when the parameters $a$ and $b$ vary, as long as $\theta$ is kept fixed. Specifically, if $N,X_{1},X_{2},\ldots$ are independent random variables such that the law of $N$ is $\PF(\theta,a_{1},b_{1})$ (with pgf~$f_{1}$) and the common law of the $X_{k}$ is $\PF(\theta,a_{2},b_{2})$ (with pgf~$f_{2}$), then the random sum $Y:=\sum_{k=1}^{N}X_{k}$ has pgf
\begin{equation}\label{eq1:iteration stable general}
f_{1}\circ f_{2}(s)\ =\ 1\,-\,\left[\frac{a_{1}a_{2}}{(1-s)^{\theta}}+a_{1}b_{2}+b_{1}\right]^{-1/\theta},\quad s\in [0,1).
\end{equation}
and thus again a law of power-fractional type, namely
\begin{gather}\label{eq2:iteration stable general}
Y\ \eqdist\ \PF(\theta,a_{1}a_{2},a_{1}b_{2}+b_{1}).
\end{gather}
This follows from \eqref{eq2:def PF gf}, but can also be deduced with the help of the functional equation \eqref{eq:general functional equation}. By fixing $\theta$ and thus $H=H_{\theta}$, we obtain
\begin{align*}
H(f_{1}\circ f_{2}(s))\ &=\ a_{1}H(f_{2}(s))+H(f_{1}(0))\\
&=\ a_{1}a_{2}H(s)+a_{1}H(f_{2}(0))+H(f_{1}(0)).
\end{align*}
Here is a mnemonic -- though somewhat unconventional in notation -- for expressing \eqref{eq1:iteration stable general} and \eqref{eq2:iteration stable general}:
\begin{gather}\label{eq3:iteration stable general}
\sum_{k=1}^{\PF(\theta,a_{1},b_{1})}\PF(\theta,a_{2},b_{2})_{k}\ =\ \PF(\theta,a_{1}a_{2},a_{1}b_{2}+b_{1}),
\end{gather}
where the independence assumptions in the original formulation are implicitly assumed.  
Generalizing to any finite number of compositions, we arrive at the following result:

\begin{Lemma}
Fixing $\theta\in (0,1]$, let $f_{k}$ for $k=1,\ldots,n$ be the pgf~of $\PF(\theta,a_{k},b_{k})$ for arbitrary $(a_{k},b_{k})$ satisfying (A1). Then $f_{1}\circ\cdots\circ f_{n}$ is the pgf of the power-frac\-tional law with parameters $\theta,\prod_{k=1}^{n}a_{k}$, and $\sum_{k=1}^{n}b_{k}\prod_{i=1}^{k-1}a_{i}$, abbreviated hereafter as
\begin{equation}\label{eq:composition rule PF}
f_{1}\circ\ldots\circ f_{n}\ \sim\ \PF\left(\theta,\prod_{k=1}^{n}a_{k},\sum_{k=1}^{n}b_{k}\prod_{i=1}^{k-1}a_{i}\right).
\end{equation}
\end{Lemma}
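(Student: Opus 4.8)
The plan is to prove the statement by induction on $n$, using the two-fold composition rule \eqref{eq3:iteration stable general} as the base case and the inductive step. For $n=1$ there is nothing to prove, and $n=2$ is exactly \eqref{eq1:iteration stable general}--\eqref{eq2:iteration stable general}, which was already established from the functional equation \eqref{eq:general functional equation} (or directly from \eqref{eq2:def PF gf}). So the core of the argument is the passage from $n-1$ compositions to $n$.

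The key step: assume that $f_{1}\circ\cdots\circ f_{n-1}$ is the pgf of $\PF\!\left(\theta,\,\prod_{k=1}^{n-1}a_{k},\,\sum_{k=1}^{n-1}b_{k}\prod_{i=1}^{k-1}a_{i}\right)$, and write $A:=\prod_{k=1}^{n-1}a_{k}$, $B:=\sum_{k=1}^{n-1}b_{k}\prod_{i=1}^{k-1}a_{i}$. Then $f_{1}\circ\cdots\circ f_{n} = (f_{1}\circ\cdots\circ f_{n-1})\circ f_{n}$ is the composition of the pgf of $\PF(\theta,A,B)$ with the pgf of $\PF(\theta,a_{n},b_{n})$. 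Applying the two-fold rule \eqref{eq3:iteration stable general} with $(a_{1},b_{1})\rightsquigarrow(A,B)$ and $(a_{2},b_{2})\rightsquigarrow(a_{n},b_{n})$ gives that this is the pgf of $\PF\!\left(\theta,\,A\,a_{n},\,A\,b_{n}+B\right)$. It then remains to check the bookkeeping: $A\,a_{n}=\prod_{k=1}^{n}a_{k}$, and $A\,b_{n}+B = b_{n}\prod_{k=1}^{n-1}a_{k} + \sum_{k=1}^{n-1}b_{k}\prod_{i=1}^{k-1}a_{i} = \sum_{k=1}^{n}b_{k}\prod_{i=1}^{k-1}a_{i}$, which matches the claimed second parameter. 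One should also note that applying \eqref{eq3:iteration stable general} is legitimate because it only requires the intermediate parameters to satisfy (A1); since the $b$-parameters are all positive, $A,B>0$, and the sum and product of the $a_k$ stay positive, the pair $(A,B)$ lies in a case where \eqref{eq1:iteration stable general} is valid (the validity there rests on \eqref{eq2:def PF gf}, which needs no upper constraint). If one wants the resulting law to itself satisfy (A1) — i.e.\ to be a genuine pgf rather than merely a formal identity of functions — this follows from the fact that a composition of pgf's is again a pgf, so $f_1\circ\cdots\circ f_n$ is automatically the pgf of a probability law on $\N_0$, and by uniqueness of power-series coefficients it is the $\PF$ law identified above.

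There is no real obstacle here; the only point requiring a little care is making sure the two-fold rule \eqref{eq3:iteration stable general} is applied with admissible parameters at each stage of the induction — in particular that the "$a+b\ge 1$" type constraint of (A1) is not needed for the functional-equation derivation of \eqref{eq1:iteration stable general} (it is only needed to guarantee that $f$ is a pgf, which is automatic once we start from genuine pgf's $f_k$ and compose them). Given that, the induction closes immediately and the telescoping identity for the $b$-parameter is routine. I would present this as a short paragraph: state the induction, invoke \eqref{eq3:iteration stable general} in the inductive step, and verify the two-line identity $A b_n + B = \sum_{k=1}^{n}b_k\prod_{i=1}^{k-1}a_i$.
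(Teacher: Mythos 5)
Your proof is correct and matches the paper's (implicit) argument: the paper states the lemma as an immediate consequence of iterating the two-fold composition rule \eqref{eq3:iteration stable general}, which is exactly your induction, and your telescoping check of the $b$-parameter together with the observation that the composition of proper pgf's is again a proper pgf (so the resulting parameters automatically satisfy (A1), e.g.\ via $a_{1}(a_{2}+b_{2})+b_{1}\ge a_{1}+b_{1}\ge 1$) settles the admissibility point. Nothing is missing.
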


For linear-fractional distributions $(\theta=1)$, Eq.\,\eqref{eq:composition rule PF} has already been stated in \cite[Eq.\,(1.7)]{Alsmeyer:21} and then used to derive results for linear-fractional \GWP's in iid random environment. The latter means to interpret $(a_{n},b_{n})_{n\ge 1}$ as the outcome of a sequence $\bfe=(\bfe_{n})_{n\ge 1}$ of iid random vectors $\bfe_{n}=(A_{n},B_{n})$ (the random environment). In the following, the approach described in \cite{Alsmeyer:21} is extended to the power-fractional case. More specifically, we will study the behavior of a Galton-Watson process in random environment (\GWPRE) with iid power-fractional offspring laws $\PF(\theta,A_{n},B_{n})$ for $\theta$ fixed. As \eqref{eq:composition rule PF} shows further, the sequence describing the parameter evolution, namely
\begin{equation*}
(\Pi_{n},R_{n})\ :=\ \left(\prod_{k=1}^{n}A_{k},\sum_{k=1}^{n}B_{k}\prod_{i=1}^{k-1}A_{i}\right),\quad n\ge 0
\end{equation*}
does not depend on the choice of $\theta$, apart from the constraints imposed by (A1) on the range of the $\bfe_{n}=(A_{n},B_{n})$. It is therefore exactly the same as in \cite{Alsmeyer:21} and in fact the sequence of backward iterations of the random affine linear functions
\begin{equation}\label{eqdef:gn(x)}
g_{n}(x)\,=\,g(\bfe_{n},x)\,:=\,A_{n}x+B_{n},\quad n\ge 1.
\end{equation}
which have been extensively studied in the literature, see e.g.~\cite{Kesten:73,Vervaat:79,GolMal:00,AlsIksRoe:09} and especially the recent monographs \cite{BurDamMik:16,Iksanov:16}. A review of its most important properties for the present work will be given in the next section after some further background and notation needed throughout.

\vspace{.1cm}
As usual, let $(Z_{n})_{n\ge 0}$ denote the power-fractional \GWPRE\ to be considered hereafter, and let $(A,B)$ a generic copy of the environmental variables $(A_{n},B_{n})$ which, according to (A1), must satisfy
\begin{equation}\label{eq:parameter settings}
\Prob(A>0,\,B>0,\,A+B\ge 1)\,=\,1.
\end{equation}
Define $\bfe_{1:n}:=(\bfe_{1},\ldots,\bfe_{n})$ and $\bfe_{\ssy\geq n}:=(\bfe_{n},\bfe_{n+1},\ldots)$, so that $\bfe=\bfe_{\ssy\geq 1}$.
Given $\bfe_{n}$, let $f_{n}=f(\bfe_{n},\cdot)$ denote the \emph{quenched pgf} of the power-fractional distribution $\PF(\theta,A_{n},B_{n})=(P_{n,k})_{k\ge 0}$. From \eqref{eq1:def PF gf}, we have
$$ \frac{1}{(1-f_{n}(s))^{\theta}}\ =\ \frac{A_{n}}{(1-s)^{\theta}}\,+\,B_{n}\quad\text{a.s.} $$
Let $(P_{k})_{k\ge 0}=\PF(\theta,A,B)$ be a generic copy of the $(P_{n,k})_{k\ge 0}$. 

\vspace{.1cm}
Now, conditioned $\bfe_{1:n}$, the individuals of generation $n-1$ produce offspring according to the power-fractional distribution $\PF(\theta,\gamma,A_{n},B_{n})$, which has the pgf $f_{n}$. Therefore 
$$ f_{1:n}\ :=\ f_{1}\circ\ldots\circ f_{n} $$
equals the random pgf of (the quenched law of) $Z_{n}$ given both $\bfe_{1:n}$ and also the full environment $\bfe$. Equivalently, by \eqref{eq:composition rule PF},
\begin{gather}\label{eq:quenched law Z_n}
\Prob(Z_{n}\in\cdot|\bfe_{1:n})\ =\ \Prob(Z_{n}\in\cdot|\bfe)\ =\ \PF(\theta,\Pi_{n},R_{n})
\shortintertext{and}
f_{1:n}(s) =\ 1\ -\ \left[\frac{\Pi_{n}}{(1-s)^{\theta}}\,+\,R_{n}\right]^{-1/\theta}\label{eq:quenched gf Z_n}
\end{gather}
for each $n\in\N$, where
$$ \Pi_{n}:=\prod_{k=1}^{n}A_{k}\quad\text{and}\quad R_{n}:=\sum_{k=1}^{n}\Pi_{k-1}B_{k}. $$ 
Since, with $g_{n}$ defined in \eqref{eqdef:gn(x)} and $\psi(s):=(1-s)^{-\theta}$,
\begin{equation}\label{eq:basic identity for f_1:n}
\psi\circ f_{1:n}(s)\ =\ \frac{1}{(1-f_{1:n}(s))^{\theta}}\ =\ \frac{\Pi_{n}}{(1-s)^{\theta}}\,+\,R_{n}\ =\ g_{1:n}\circ\psi(s)
\end{equation}
for $s\in [0,1)$, we see that, up to conjugation, $(f_{1:n})_{n\ge 0}$ equals the sequence of backward iterations of the iid~random affine linear maps $g_{1},g_{2},\ldots$ already mentioned. The corresponding forward iterations $g_{n:1}(x):=g_{n}\circ\ldots\circ g_{1}(x)$ form a Markov chain on $[0,\infty)$ with initial state $x$ and also a so-called \emph{iterated function system (IFS)}. Based on these observations and similar to \cite{Alsmeyer:21} for the linear-fractional case, we are able to derive properties of \GWPRE's with power-fractional offspring laws by drawing on results about iterations of the $g_{n}$. For instance, when defining 
$$ q_{n}(\bfe_{1:n})\,:=\,\Prob(Z_{n}=0|\bfe_{1:n})\,=\,f_{1:n}(0), $$
we infer as an immediate consequence of \eqref{eq:basic identity for f_1:n} that
\begin{equation}\label{eq:survival probab quenched}
q_{n}(\bfe_{1:n})\ =\ 1\,-\,\big(\Pi_{n}+R_{n}\big)^{-1/\theta}\quad\text{a.s.}
\end{equation}
for all $n\ge 1$. Let
$$ q(\bfe)\ :=\ \lim_{n\to\infty}q_{n}(\bfe_{1:n}) $$
denote its a.s.~limit, which is the quenched extinction probability of $(Z_{n})_{n\ge 0}$ given $\bfe$.

\vspace{.2cm}
\emph{Reversing the environment}. As in \cite{Alsmeyer:21}, it will be preferable for the presentation of some of our results to consider the given \GWPRE\ up to time $n$ under the time-reversed environment $\bfe_{n:1}=(\bfe_{n},\ldots,\bfe_{1})$ rather than $\bfe_{1:n}$ which means to use the random offspring laws $\PF(\theta,A_{k},B_{k})$ in reverse order. This does not change the (annealed) law of $Z_{0:n}:=(Z_{0},\ldots,Z_{n})$, but the quenched laws are naturally different. On the other hand, they share the same distribution as random measures, see \eqref{eq:equal quenched laws} below, and this will sometimes be used hereafter to formulate assertions about quenched asymptotic behavior in a more tangible form than they would appear without time-reversal. To be more specific, we define
\begin{gather}
\bfP\,:=\,\Prob(\cdot|\bfe),\quad\bfP^{(1:n)}\,:=\,\Prob(\cdot|\bfe_{1:n})\quad\text{and}\quad\bfP^{(n:1)}\,:=\,\Prob(\cdot|\bfe_{n:1}),\nonumber
\intertext{with corresponding expectations $\bfE,\,\bfE^{(1:n)}$ and $\bfE^{(n:1)}$, and note that}
\begin{split}
\bfP(Z_{0:n}\in\cdot)\ &=\ \bfP^{(1:n)}(Z_{0:n}\in\cdot)\ \eqdist\ \bfP^{(n:1)}(Z_{0:n}\in\cdot)
\end{split}
\label{eq:equal quenched laws}
\end{gather}
for each $n\in\N$. Then $\bfP^{(n:1)}(Z_{n}\in\cdot)$ has pgf~$f_{n:1}:=f_{n}\circ\ldots\circ f_{1}$, giving
\begin{align}
\bfP^{(n:1)}(Z_{n}\in\cdot)\ =\ \PF\left(\theta,\Pi_{n},\Pi_{n}\sum_{k=1}^{n}\Pi_{k}^{-1}B_{k}\right)
\label{eq:quenched law Z_n bw}
\end{align}
upon using \eqref{eq:composition rule PF} for $f_{n:1}$. We thus see that, when opposed to $\bfP^{(1:n)}(Z_{n}\in\cdot)$, the random parameter $R_{n}$ is exchanged by $\Pi_{n}R_{n}^{(-1)}$, where
\begin{equation}\label{eq:def Rn dual}
R_{n}^{(-1)}\,:=\,\sum_{k=1}^{n}\Pi_{k}^{-1}B_{k}
\end{equation}
and $(\Pi_{n},R_{n})\eqdist (\Pi_{n},R_{n}^{(-1)})$ holds for all $n$ by \eqref{eq:equal quenched laws}.

\vspace{.2cm}
\emph{Classification}. For a \GWPRE, the distinction between subcritical, critical and supercritical type is based on the behavior of the logarithm of the quenched mean $\log\bfE Z_{n}$ as $n\to\infty$. Provided this quantity is a.s.~finite for all $n$, it constitutes an ordinary random walk with generic increment $\log f'(1)$ which in the present situation equals $-\theta^{-1}\log A$.
In fact, if $Z_{0}=1$, then
$$ \theta\log\bfE Z_{n}\ =\ -\log\Pi_{n}\ =\ -\sum_{k=1}^{n}\log A_{k}\ =:\ S_{n}\quad\text{a.s.} $$
for all $n\ge 0$. Depending on the fluctuation-type of the walk $(S_{n})_{n\ge 0}$ (the factor $\theta^{-1}$ does not matter), namely
\begin{itemize}\itemsep2pt
\item[$\bullet$] positive divergence, i.e., $S_{n}\to\infty\text{ a.s.}$,
\item[$\bullet$] negative divergence, i.e., $S_{n}\to-\infty\text{ a.s.}$,
\item[$\bullet$] oscillation, i.e. $\textstyle\limsup_{n\to\infty}S_{n}=+\infty\text{ and }\liminf_{n\to\infty}S_{n}=-\infty\text{ a.s.}$,
\item[$\bullet$] degeneracy, i.e., $S_{n}=0\text{ a.s~for all }n\ge 0$,
\end{itemize}
the process $(Z_{n})_{n\ge 0}$ is called subcritical, supercritical, critical or strongly critical, respectively \cite[Def.~2.3]{KerstingVatutin:17}. If $\Erw\log A$ exists, this means that
\begin{align*}
(Z_{n})_{n\ge 0}\text{ is }
\begin{cases}
\hfill\text{subcritical}&\text{if }\Erw\log A>0,\\
\hfill\text{critical}&\text{if }\Erw\log A=0\text{ and }\Prob(A\ne 1)>0,\\
\text{strongly critical}&\text{if }A=1\text{ a.s.},\\
\hfill\text{supercritical}&\text{if }\Erw\log A<0.
\end{cases}
\end{align*}
Based on the very explicit knowledge of the $f_{1:n}$ in the given situation, a precise description of when each of these cases occurs is possible and was given in some detail in \cite[Sect.\,2]{Alsmeyer:21}. In the following section, we therefore limit ourselves to a summary of the most important facts, in particular with regard to the asymptotic behavior of the variables $\Pi_{n},R_{n}$ and $R_{n}^{(-1)}$, which play a role in the development of the random parameters of the deleted laws of $Z_{n}$.


\section{Prerequisites about iterations of random affine linear functions}\label{sec:random affine recursions}

Recall that $g_{n}(x)=A_{n}x+B_{n}$ and Eq.\,\eqref{eq:basic identity for f_1:n} for the backward iterations $g_{1:n}$ of the IFS and Markov chain  $(g_{n:1}(x))_{n\ge 0}$ generated by the $g_{n}$. The latter shows that
\begin{equation*}
g_{1:n}(x)\ =\ \psi\circ f_{1:n}\circ\psi^{-1}(x)\ =\ \Pi_{n}x\,+\,R_{n},\quad n\ge 1,
\end{equation*}
for any $n\ge 1$, which does not depend on $\theta$. It also explains the appearance of the $(\Pi_{n},R_{n})$ mentioned above in connection with the quenched laws of the given power-fractional \GWPRE. Goldie and Maller \cite[Thm.~2.1]{GolMal:00} have provided necessary and sufficient conditions for the stability (positive recurrence) of $(g_{n:1}(x))_{n\ge 0}$ that goes along with the almost sure convergence of the $g_{1:n}(x)$ as $n\to\infty$. Before  summarizing below their result for the case of positive $A,B$ satisfying an additional nondegeneracy condition, we further introduce a second sequence of random affine linear maps that is of interest here and in dual relation to the $g_{n}$. Indeed, defining
\begin{gather*}
g_{n}^{(-1)}(x)\,:=\,A_{n}^{-1}x+A_{n}^{-1}B_{n}\quad\text{for }n\in\N,
\intertext{the associated backward iterations take the form}
g_{1:n}^{(-1)}\,=\,\Pi_{n}^{-1}x\,+\,R_{n}^{(-1)},\quad n\in\N,
\intertext{with $R_{n}^{(-1)}$ defined by \eqref{eq:def Rn dual}, and the duality relation}
\frac{R_{n}}{\Pi_{n}}\ =\ \frac{g_{1:n}(0)}{\Pi_{n}}\ =\ g_{n:1}^{(-1)}(0)\ \eqdist\ g_{1:n}^{(-1)}(0)\ =\ R_{n}^{(-1)}
\end{gather*}
holds for all $n\in\N$.

\begin{Prop}\label{prop:GoldieMaller}
Suppose that $A,B$ are a.s. positive and
\begin{equation}\label{eq:nondegeneracy}
\Prob(Ax+B=x)\,<\,1\quad\text{for all }x\in\R.
\end{equation}
Define $J^{\pm}(x):=\Erw(x\wedge\log^{\pm}A)$, and let
\begin{equation*}
R_{\infty}\,:=\,\sum_{k\ge 1}\Pi_{k-1}B_{k}\quad\text{and}\quad R_{\infty}^{(-1)}\,:=\,\sum_{k\ge 1}\Pi_{k}^{-1}B_{k},
\end{equation*}
denote the increasing limits of $R_{n}$ and $R_{n}^{(-1)}$, respectively, called perpetuities. Then the following assertions hold:
\begin{itemize}\itemsep3pt
\item[(a)] The condition
\end{itemize}
\begin{equation}\label{eq:GolMal cond}
\Pi_{n}\,\to\,0\text{ a.s.}\quad\text{and}\quad
I_{-}\,:=\,\int_{[1,\infty)}\frac{\log x}{J^{-}(\log x)}\,\Prob(B\in dx)\,<\,\infty
\end{equation}
\begin{itemize}\itemsep3pt
\item[] is necessary and sufficient for $R_{\infty}<\infty$, the almost sure convergence of $g_{1:n}(x)$ to $R_{\infty}$, and the convergence in law of the forward iterations $g_{n:1}(x)$ to the same limit.
\item[(b)] The condition
\end{itemize}
\begin{equation}\label{eq:GolMal cond dual}
\Pi_{n}\,\to\,\infty\text{ a.s.}\quad\text{and}\quad
I_{+}\,:=\,\int_{[1,\infty)}\frac{\log x}{J^{+}(\log x)}\,\Prob\bigg(\frac{B}{A}\in dx\bigg)\,<\,\infty
\end{equation}
\begin{itemize}\itemsep3pt
\item[] is necessary and sufficient for $R_{\infty}^{(-1)}<\infty$, the almost sure convergence of $g_{1:n}^{(-1)}(x)$ to $R_{\infty}^{(-1)}$, and the convergence in law of the forward iterations $g_{n:1}(x)$ to the same limit.
\end{itemize}
In the case when \eqref{eq:nondegeneracy} fails, that is, $Ax+B=x$ a.s.~for some $x$, the violation of \eqref{eq:GolMal cond}, respectively \eqref{eq:GolMal cond dual} entails $R_{\infty}=\infty$ a.s., respectively $R_{\infty}^{(-1)}=\infty$ a.s.
\end{Prop}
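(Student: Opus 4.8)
The plan is to deduce assertions (a) and (b) from \cite[Thm.~2.1]{GolMal:00} by applying that theorem to two different iid driving sequences, and to settle the concluding degenerate case by a short telescoping computation. For part (a), recall that $R_n=g_{1:n}(0)=\sum_{k=1}^{n}\Pi_{k-1}B_k$, so $(R_n)_{n\ge0}$ is the sequence of partial sums of the perpetuity generated by $(A_n,B_n)_{n\ge1}$; since $B>0$ a.s.\ it is nondecreasing, so $R_\infty$ is exactly its monotone limit in $(0,\infty]$. Goldie and Maller's theorem, applied to $(A_n,B_n)$ under \eqref{eq:nondegeneracy}, gives that \eqref{eq:GolMal cond} is equivalent to $R_\infty<\infty$ a.s.; the one substantive ingredient I would simply quote is their renewal-theoretic integral test, namely that in the contractive regime $\Pi_n\to0$ one has $R_\infty<\infty$ a.s.\ iff $I_-<\infty$. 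The remaining two equivalent statements are then soft: if \eqref{eq:GolMal cond} holds, then $g_{1:n}(x)=\Pi_n x+R_n\to R_\infty$ a.s.\ because $\Pi_n x\to0$, and since the reversal identity $(g_1,\dots,g_n)\eqdist(g_n,\dots,g_1)$ yields $g_{n:1}(x)\eqdist g_{1:n}(x)$ for each fixed $n$, the forward iterations $g_{n:1}(x)$ converge in law to $R_\infty$; conversely, a.s.\ convergence of $g_{1:n}(x)$ for some $x\ge0$, or convergence in law of $g_{n:1}(x)$, makes $(R_n)$ tight and hence — being monotone — a.s.\ convergent to a finite limit, while $R_\infty<\infty$ forces $\Pi_{k-1}B_k\to0$ a.s., which by a conditional Borel--Cantelli argument forces $\Pi_n\to0$ a.s., so \eqref{eq:GolMal cond} is recovered via the integral test.

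For part (b) the same theorem is applied to the iid pairs $(A_n^{-1},A_n^{-1}B_n)$, whose backward iterations are $g_{1:n}^{(-1)}(x)=\Pi_n^{-1}x+R_n^{(-1)}$; again $R_n^{(-1)}$ is nondecreasing because $B/A>0$, and by the duality relation $R_n/\Pi_n\eqdist R_n^{(-1)}$ noted just before the proposition its monotone limit is the dual perpetuity $R_\infty^{(-1)}$. Three bookkeeping points must be matched against \cite[Thm.~2.1]{GolMal:00}: the multiplicative walk of the dual system is $\Pi_n^{-1}$, so ``$\Pi_n^{-1}\to0$'' reads ``$\Pi_n\to\infty$''; the additive variable is $B/A$ and $\log^{-}(A^{-1})=\log^{+}A$, so the dual integral test becomes $\int_{[1,\infty)}\frac{\log x}{J^{+}(\log x)}\,\Prob(B/A\in dx)=I_+<\infty$, i.e.\ exactly the second half of \eqref{eq:GolMal cond dual}; and the dual nondegeneracy requirement $\Prob(A^{-1}x+A^{-1}B=x)<1$ for all $x$ is just \eqref{eq:nondegeneracy} evaluated at $-x$, so no extra hypothesis is needed. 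The three equivalent consequences then follow verbatim as in part (a) with $g^{(-1)}$ replacing $g$.

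It remains to treat the case where \eqref{eq:nondegeneracy} fails, say $Ax_0+B=x_0$ a.s.\ for some $x_0\in\R$. Since $B>0$ a.s.\ we must have $x_0\ne0$, and $B=(1-A)x_0>0$ a.s.\ forces $A<1$ a.s.\ when $x_0>0$ and $A>1$ a.s.\ when $x_0<0$; the telescoping identities $\Pi_{k-1}B_k=x_0(\Pi_{k-1}-\Pi_k)$ and $\Pi_k^{-1}B_k=x_0(\Pi_k^{-1}-\Pi_{k-1}^{-1})$ give $R_n=x_0(1-\Pi_n)$ and $R_n^{(-1)}=x_0(\Pi_n^{-1}-1)$. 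If $x_0>0$ then $\log A<0$ a.s., so $\Pi_n\to0$ a.s., and $B\le|x_0|$ is bounded while $J^{-}(y)>0$ for $y>0$, so $I_-<\infty$; thus \eqref{eq:GolMal cond} holds, and ``failure of \eqref{eq:GolMal cond}'' can occur only in the remaining case $x_0<0$, where $A>1$ a.s., hence $\Pi_n\to\infty$ a.s.\ and $R_n=x_0(1-\Pi_n)\to+\infty$, i.e.\ $R_\infty=\infty$ a.s. The dual statement is symmetric: for $x_0<0$ one has $\Pi_n\to\infty$ and $B/A\le|x_0|$ bounded, so \eqref{eq:GolMal cond dual} holds, whence failure of \eqref{eq:GolMal cond dual} forces $x_0>0$, so $\Pi_n^{-1}\to\infty$ a.s.\ and $R_n^{(-1)}=x_0(\Pi_n^{-1}-1)\to+\infty$, i.e.\ $R_\infty^{(-1)}=\infty$ a.s.

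The real work sits inside \cite[Thm.~2.1]{GolMal:00} — the integral test equating finiteness of the perpetuity with $I_-<\infty$ in the contractive regime, whose proof rests on Erickson/Kesten--Maller-type estimates linking $1/J^{-}(\cdot)$ to expected occupation measures of the walk $-\log\Pi_n$; everything added above is either routine or the purely bookkeeping task of making the two integral conditions $I_{\pm}$ and the two nondegeneracy conditions line up under $A\mapsto A^{-1}$. One caveat worth stating in the proof is that \eqref{eq:GolMal cond} and \eqref{eq:GolMal cond dual} are \emph{not} complementary — both fail when $\Pi_n$ oscillates and when $A\equiv1$ — so the two halves of the proposition must be run independently rather than deduced from one another.
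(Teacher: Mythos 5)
Your proposal is correct and takes essentially the same route as the paper: the proposition is stated there as a specialization of \cite[Thm.~2.1]{GolMal:00} (applied to $(A,B)$ and to the dual pairs $(A^{-1},A^{-1}B)$) and is given no independent proof, so the substance is exactly the bookkeeping you carry out — matching $\Pi_n^{-1}\to0$ with $\Pi_n\to\infty$, $\log^-(A^{-1})=\log^+A$ so that the dual integral test becomes $I_+$, and the dual nondegeneracy condition being \eqref{eq:nondegeneracy} at $-x$. Your telescoping treatment of the degenerate case ($R_n=x_0(1-\Pi_n)$, $R_n^{(-1)}=x_0(\Pi_n^{-1}-1)$) is a sound, self-contained way to obtain the final assertion.
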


We note that we always have $R_{\infty}\ge 1$ a.s.~as a consequence of the basic constraint \eqref{eq:parameter settings} on $(A,B)$ (see \cite[Eq.\,(16)]{Alsmeyer:21}). It is also easily verified that $R_{\infty}$ and $R_{\infty}^{(-1)}$ cannot be a.s. finite at the same time. Therefore, the trichotomy
\begin{itemize}\itemsep=2pt
\item[(C1)] $R_{\infty}<\infty=R_{\infty}^{(-1)}$ a.s.
\item[(C2)] $R_{\infty}^{(-1)}<\infty=R_{\infty}$ a.s.
\item[(C3)] $R_{\infty}=R_{\infty}^{(-1)}=\infty$ a.s.
\end{itemize}
holds. A complete characterization of the three cases in terms of $(A,B)$ is possible with the help of the previous proposition and has been provided in \cite{Alsmeyer:21}. We refrain from repeating it here and only note that, as an outcome of this characterization, the \GWPRE\ $(Z_{n})_{n\ge 0}$ is
\begin{itemize}\itemsep2pt
\item[] supercritical if (C1) holds;
\item[] subcritical if (C2) holds;
\item[] critical if (C3) holds and $\Prob(A\ne 1)>0$;
\item[] strongly critical if (C3) holds and $A=1$ a.s..
\end{itemize}

\section{Basic results for power-fractional \GWP's in a constant environment}\label{sec:fixed env}

Before proceeding with the random environment case, we devote the present section to a survey of the most basic results for power-fractional \GWP's in a constant environment, which in view of our standing assumption means that the offspring law is $\PF(\theta,a,b)$ for some $\theta\in (0,1]$ and $a,b>0$ such that $a+b\ge 1$. 

\vspace{.2cm}\noindent
\textbf{\emph{1. Supercritical case.}} Recall from the discussion in Section \ref{sec:1} that this case occurs iff $a\in (0,1)$ in which case the mean offspring equals $\sfm=a^{-1/\theta}$ (see also Table \ref{tab:1.3}). The two basic results, collected in the subsequent theorem, are about the extinction probability $q$ and the limiting behavior of the normalization of the process which is well-known to be a nonnegative martingale.

\begin{Theorem}\label{thm:supercritical fixed env}
Let $(Z_{n})_{n\ge 0}$ be a supercritical power-fractional \GWP\ with one ancestor, offspring distribution $\PF(\theta,a,b)$ and normalization $W_{n}=\sfm^{-n}Z_{n}$ for $n\ge 0$. Denote by $f$ the offspring pgf Then the following assertions hold:
\begin{itemize}\itemsep2pt
\item[(a)] The extinction probability $q$ is given by
\begin{equation}\label{eq:ext probab fixed environment}
q\ =\ 1\,-\,\bigg(\frac{1-a}{b}\bigg)^{1/\theta}.
\end{equation}
\item[(b)] As $n\to\infty$, the normalization $W_{n}$ converges a.s.~and in $L^{1}$ to a random variable $W_{\infty}$ with Laplace transform
\begin{equation}\label{eq:LT of W}
\vph(u)\ =\ \Erw e^{-uW_{\infty}}\ =\ 1\,-\,\left[\frac{1}{u^{\theta}}+\frac{b}{1-a}\right]^{-1/\theta},\quad u\ge 0,
\end{equation}
which in turn satisfies the functional equation $\vph(u)=f\circ\vph(a^{1/\theta}u)$ for all $u\ge 0$, known as (the multiplicative form of) Abel's equation.
\end{itemize}
\end{Theorem}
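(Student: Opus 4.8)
The plan is to treat the two parts separately, since (a) is essentially an algebraic identification of the minimal fixed point of $f$, while (b) follows from the general Kesten–Stigum theory for supercritical Galton-Watson processes once we compute the relevant Laplace transform explicitly using the iteration formula \eqref{eq:iteration of f(s)}.

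For part (a): the extinction probability $q$ is the smallest root in $[0,1]$ of $f(s)=s$. Using \eqref{eq2:def PF gf} with $\gamma=1$, the fixed point equation $(1-f(s))^{\theta}=(1-s)^{\theta}$ together with $f(s)=s$ gives $(1-s)^{-\theta}=a(1-s)^{-\theta}+b$, i.e. $(1-a)(1-s)^{-\theta}=b$, hence $1-s=((1-a)/b)^{1/\theta}$. Since $a\in(0,1)$ in the supercritical case, this is a genuine value in $(0,1)$, and it is the minimal such fixed point (the other being $s=1$). This yields \eqref{eq:ext probab fixed environment} directly; alternatively one can quote the identity $b=(1-a)(1-q)^{-\theta}$ already recorded after \eqref{eq2:def PF gf} (the restated constraint with $\gamma=1$) and solve for $q$. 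This part is routine.

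For part (b): the a.s.\ and $L^1$ convergence of $W_n=\sfm^{-n}Z_n$ to a limit $W_\infty$ with $\Erw W_\infty=1$ is the Kesten–Stigum theorem, which applies here because the offspring law $\PF(\theta,a,b)$ has $\Erw Z_1\log^+ Z_1<\infty$ — indeed, by Theorem \ref{thm:asymptotics pn} (and the $\theta=1$ linear-fractional case, where all moments are finite), $p_n\asymp n^{-(2+\theta)}$, so $Z_1$ has finite moments of every order $<1+\theta$, in particular a finite $(1+\delta)$-moment for small $\delta>0$, which gives the $L\log L$ condition. To identify the Laplace transform, I would use that $\vph(u):=\Erw e^{-uW_\infty}$ is the limit of $\Erw e^{-u\sfm^{-n}Z_n}=f^n(e^{-u\sfm^{-n}})=f^n(e^{-ua^{n/\theta}})$ since $\sfm^{-1}=a^{1/\theta}$. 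Applying the first line of \eqref{eq:iteration of f(s)} with $\gamma=1$, $s=e^{-ua^{n/\theta}}$, and writing $1-s\simeq ua^{n/\theta}$ as $n\to\infty$ (so $(1-s)^\theta\simeq u^\theta a^n$), we get
\begin{align*}
f^n\big(e^{-ua^{n/\theta}}\big)\ &=\ 1-\frac{1-s}{\big[a^n+b_n(1-s)^\theta\big]^{1/\theta}}\\
&\simeq\ 1-\frac{ua^{n/\theta}}{\big[a^n+b_n u^\theta a^n\big]^{1/\theta}}\ =\ 1-\frac{u}{\big[1+(b_n/a^n)\,u^\theta\,a^n\cdot a^{-n}\big]^{1/\theta}},
\end{align*}
and since $b_n=b\sum_{k=0}^{n-1}a^k\to b/(1-a)$ as $n\to\infty$ (recall $a<1$), the bracket converges to $1+\frac{b}{1-a}u^\theta$, giving $\vph(u)=1-\big[1+\frac{b}{1-a}u^\theta\big]^{-1/\theta}$, which is \eqref{eq:LT of W} after pulling $u^{-\theta}$ inside. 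Finally, Abel's functional equation $\vph(u)=f\circ\vph(a^{1/\theta}u)$ is immediate either from the branching property in the limit ($W_\infty\eqdist\sfm^{-1}\sum_{k=1}^{Z_1}W_\infty^{(k)}$ with iid copies) or by directly verifying that the formula for $\vph$ satisfies it, using \eqref{eq2:def PF gf}: replacing $u$ by $a^{1/\theta}u$ multiplies $u^{-\theta}$ by $a^{-1}$, and then the relation $(1-f(\vph))^{-\theta}=a(1-\vph)^{-\theta}+b$ matches up the two sides.

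The main obstacle is the interchange of limits needed to justify $\vph(u)=\lim_n f^n(e^{-ua^{n/\theta}})$ rigorously and the asymptotic replacement $1-e^{-ua^{n/\theta}}\simeq ua^{n/\theta}$ inside the (continuous) functions in \eqref{eq:iteration of f(s)}; this is standard but should be stated carefully, e.g.\ by noting $f^n$ and $\psi$ are continuous and monotone and squeezing $1-e^{-x}$ between $x(1-x/2)$ and $x$ for small $x>0$. Everything else — the moment check for Kesten–Stigum and the verification of Abel's equation — is mechanical given the explicit formulas already in the paper.
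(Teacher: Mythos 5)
Your argument is correct, and its engine for (b) --- computing $\lim_{n}\Erw e^{-uW_{n}}=\lim_{n}f^{n}(e^{-ua^{n/\theta}})$ from the explicit iteration formula \eqref{eq:iteration of f(s)} --- is exactly the paper's computation. The differences lie in the subsidiary steps. For (a) you solve the fixed-point equation $(1-a)(1-s)^{-\theta}=b$ coming from \eqref{eq2:def PF gf}, whereas the paper takes $q=\lim_{n}f^{n}(0)=\lim_{n}\big(1-[a^{n}+b_{n}]^{-1/\theta}\big)$; both are one-line arguments. For (b) you secure the a.s.\ and $L^{1}$ convergence up front by checking the $L\log L$ (indeed finite $(1+\delta)$-moment) condition from the tail behavior $p_{n}\asymp n^{-(2+\theta)}$ of Theorem \ref{thm:asymptotics pn}, handling $\theta=1$ separately; the paper instead reads $\Prob(W_{\infty}=0)=\lim_{u\to\infty}\vph(u)=q<1$ off the limiting Laplace transform and then invokes the Kesten--Stigum dichotomy, which is self-contained (no appeal to Theorem \ref{thm:asymptotics pn}) and covers $\theta=1$ without a separate case --- a slightly leaner route, though yours buys an explicit moment statement. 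Your direct verification of Abel's equation via \eqref{eq2:def PF gf} is fine; the paper simply cites Athreya and Ney. One cosmetic slip: after factoring $a^{n}$ out of the bracket, the display should read $1+b_{n}u^{\theta}$; as written, $1+(b_{n}/a^{n})u^{\theta}a^{n}\cdot a^{-n}$ equals $1+b_{n}a^{-n}u^{\theta}$, which diverges. Since you then pass to the correct limit $1+\tfrac{b}{1-a}u^{\theta}$, this is clearly a typo rather than a gap.
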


\begin{proof}
(a) Recalling that $b_{n}=b\sum_{k=0}^{n-1}a^{k}=b(1-a)^{-1}(1-a^{n})$, \eqref{eq:ext probab fixed environment} follows from
\begin{align*}
q\ =\ \lim_{n\to\infty}\Prob(Z_{n}=0)\ =\ \lim_{n\to\infty}f^{n}(0)\ =\ \lim_{n\to\infty}1\,-\,[a^{n}+b_{n}]^{-1/\theta}.
\end{align*}

(b) Since $\Erw e^{-uW_{n}}=\Erw e^{-uZ_{n}/\sfm^{n}}=f^{n}(e^{-u/\sfm^{n}})$ and $\sfm^{n}=a^{-n/\theta}$, we obtain by another use of \eqref{eq:ext probab fixed environment} that
\begin{align*}
\frac{1}{(1-\Erw e^{-uW_{n}})^{\theta}}\ &=\ \frac{1}{(1-f^{n}(e^{-u/\sfm^{n}}))^{\theta}}\ =\ \frac{a^{n}}{(1-e^{-u/\sfm^{n}})^{\theta}}\,+\,b_{n}\\
&=\ \left[\frac{a^{n/\theta}}{1-e^{-ua^{n/\theta}}}\right]^{\theta}\,+\,b_{n}.
\end{align*}
Now the last expression is easily seen to converge to $u^{-\theta}+\frac{b}{1-a}$ as $n\to\infty$, and this proves \eqref{eq:LT of W}. Moreover,
$$ \Prob(W_{\infty}=0)\,=\,\lim_{u\to\infty}\vph(u)\,=\,1\,-\,\bigg(\frac{1-a}{b}\bigg)^{1/\theta}\,=\,q $$
so that $W_{n}$ must also converge in $L^{1}$ to $W_{\infty}$ by the Kesten-Stigum theorem.
The asserted functional equation is a well-known fact that can be found, e.g., in \cite[Eq.\,(5) on p.\,10]{Athreya+Ney:72}.\qed
\end{proof}

In the linear-fractional case $\theta=1$, we obtain $q=\frac{a+b-1}{b}$ and then
\begin{equation}\label{eq:LT Winfty linear-fractional case}
\vph(u)\ =\ q\,+\,(1-q)\,\frac{1-q}{1-q+u}.
\end{equation}
This shows that the law of the normalized limit $W_{\infty}$ equals a mixture of the Dirac measure at 0 and the exponential law with parameter $1-q$, here called \emph{continuous linear-fractional law} and abbreviated as $\CLF$. Its precise definition will be given below within a wider class of distributions.

\vspace{.1cm}
For general $\theta\in (0,1]$ and with $q$ given by \eqref{eq:ext probab fixed environment}, $\vph(u)$ can be rewritten as
\begin{equation}\label{eq:LT Winfty power-fractional case}
\vph(u)\ =\ q\,+\,(1-q)\left[1-\left(1-\frac{(1-q)^{\theta}}{u^{\theta}+(1-q)^{\theta}}\right)^{1/\theta}\right]
\end{equation}
Thus the law of $W_{\infty}$ is also a mixture of the Dirac measure at 0 with a law that has Laplace transform appearing in square brackets in \eqref{eq:LT Winfty power-fractional case}. Both the latter law and the mixture are referred to as the \emph{continuous power-fractional law} and abbreviated as $\CPF$. To refine the parametrization, we return to Eq.\,\eqref{eq:LT of W} and rewrite it as:
\begin{align*}
\frac{1}{(1-\vph(u))^{\theta}}\ =\ \frac{1}{u^{\theta}}\,+\,\frac{b}{1-a},
\end{align*}
thus in a form very similar to Eq.\,\eqref{eq2:def PF gf} for power-fractional laws. This suggests to define $\CPF(\theta,\alpha,\beta)$ for $\theta\in (0,1],\,\alpha>0$ and $\beta\ge 1$ to be the law with Laplace transform $\vph(u)=1-\big[\alpha u^{-\theta}+\beta\big]^{-1/\theta}$, giving
\begin{gather}
\frac{1}{(1-\vph(u))^{\theta}}\ =\ \frac{\alpha}{u^{\theta}}\,+\,\beta.
\label{eq:CPF functional equation}
\shortintertext{and}
\vph(u)\ =\ (1-\beta^{-1/\theta})\,+\,\beta^{-1/\theta}\left[1-\left(1-\frac{\alpha/\beta}{u^{\theta}+\alpha/\beta}\right)^{1/\theta}\right].\label{eq:CPF mixing form LT}
\end{gather}
If we now let $\CPF_{+}(\theta,\alpha):=\CPF(\theta,\alpha,1)$ be the continuous power-fractional law on the \emph{positive} halfline with Laplace transform
\begin{gather}
\vph_{\theta,\alpha}(u)\,:=\,1-\left(1-\frac{\alpha}{u^{\theta}+\alpha}\right)^{1/\theta}\ =\ 1-\left(\frac{u^{\theta}}{u^{\theta}+\alpha}\right)^{1/\theta},
\label{eq:CPFplus LT}
\intertext{then \eqref{eq:CPF mixing form LT} means that}
\CPF(\theta,\alpha,\beta)\,:=\,(1-\beta^{-1/\theta})\delta_{0}\,+\,\beta^{-1/\theta}\,\CPF_{+}(\theta,\alpha/\beta).\label{eq:CPF mixing form}
\end{gather}
The continuous linear-fractional laws occur if $\theta=1$, and we therefore define
\begin{equation}\label{eq:def CLF} 
\CLF(\alpha,\beta)\,:=\,\CPF(1,\alpha,\beta)\ =\ (1-\beta)\delta_{0}\,+\,\beta\,\textit{Exp}(\alpha).
\end{equation}
Finally, \eqref{eq:LT Winfty power-fractional case} can now be restated as
\begin{equation}\label{eq:kaw of Winfty general}
W_{\infty}\,\eqdist\,\CPF(\theta,1,(1-q)^{-\theta}).
\end{equation}
\indent We finish our discussion about the law of $W_{\infty}$ with two lemmata. The first one provides an extension of the well-known fact that a geometric sum of iid exponentials is again an exponential random variable and, more generally, a linear-fractional sum of continuous linear-fractionals is again of the latter type.

\begin{Lemma}\label{lem:random sum law CPF}
Given $\theta\in (0,1]$ and positive $\alpha,\beta,a,b$ with $\beta\ge 1$ and $b\ge 1-a$, let $N,Y_{1},Y_{2},\ldots$ be independent random variables such that
\begin{gather}
N\,\eqdist\,\PF(\theta,1,a,b)\quad\text{and}\quad Y_{n}\,\eqdist\,\CPF(\theta,\alpha,\beta)\quad\text{for each }n.\nonumber
\shortintertext{Then}
\label{eq:random sum law CPF}
\sum_{k=1}^{N}Y_{k}\ \eqdist\ \CPF(\theta,a\alpha,a\beta+b)
\end{gather}
\end{Lemma}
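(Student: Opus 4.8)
The plan is to compute the Laplace transform of the random sum $\sum_{k=1}^{N}Y_{k}$ directly by conditioning on $N$, using the fact that for independent $Y_k$ with common Laplace transform $\vph$, one has $\Erw e^{-u\sum_{k=1}^N Y_k} = f(\vph(u))$, where $f$ is the pgf of $N$. So I would write $\Psi(u) := \Erw\exp(-u\sum_{k=1}^N Y_k) = f(\vph(u))$ with $f$ the pgf of $\PF(\theta,1,a,b)$ and $\vph$ the Laplace transform of $\CPF(\theta,\alpha,\beta)$, then show $\Psi$ is the Laplace transform of $\CPF(\theta,a\alpha,a\beta+b)$.

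The key step is to exploit the parallel between the defining functional equation \eqref{eq2:def PF gf} for power-fractional pgf's, namely $(1-f(s))^{-\theta} = a(1-s)^{-\theta}+b$, and the defining relation \eqref{eq:CPF functional equation} for $\CPF$, namely $(1-\vph(u))^{-\theta} = \alpha u^{-\theta}+\beta$. I would set $s = \vph(u)$ and substitute: then $(1-f(\vph(u)))^{-\theta} = a(1-\vph(u))^{-\theta} + b = a(\alpha u^{-\theta}+\beta) + b = a\alpha\, u^{-\theta} + (a\beta+b)$. Reading off, $\Psi(u) = f(\vph(u))$ satisfies $(1-\Psi(u))^{-\theta} = (a\alpha)u^{-\theta} + (a\beta+b)$, which is precisely \eqref{eq:CPF functional equation} with parameters $(\theta, a\alpha, a\beta+b)$; hence $\Psi = \vph_{\theta,a\alpha,a\beta+b}$ and the claim follows by uniqueness of Laplace transforms, via \eqref{eq:CPF mixing form}.

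The main thing to check, rather than an obstacle, is that the new parameters satisfy the admissibility constraints of $\CPF(\theta,\alpha',\beta')$, i.e. $\alpha' = a\alpha > 0$ and $\beta' = a\beta + b \ge 1$. Positivity of $a\alpha$ is immediate. For $\beta'\ge 1$, I would use $\beta \ge 1$ and $a+b\ge 1$ (equivalently $b\ge 1-a$): if $a\le 1$ then $a\beta + b \ge a + b \ge 1$, while if $a\ge 1$ then $a\beta + b \ge \beta + b \ge 1$. This confirms the right-hand side of \eqref{eq:random sum law CPF} is a well-defined $\CPF$ law. One should also note that all terms above are finite for $u>0$ (the Laplace transforms take values in $[0,1]$), so the substitution $s=\vph(u)\in[0,1)$ stays in the domain of the functional equation \eqref{eq2:def PF gf}, and the case $\beta = 1$, $b = 1-a$ (where the $\CPF$ law has no atom at $0$) is covered without change.
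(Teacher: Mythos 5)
Your proposal is correct and follows essentially the same route as the paper: the random sum has Laplace transform $f\circ\vph$, and combining the functional equations \eqref{eq2:def PF gf} and \eqref{eq:CPF functional equation} immediately gives $(1-f(\vph(u)))^{-\theta}=a\alpha\,u^{-\theta}+a\beta+b$, identifying the law as $\CPF(\theta,a\alpha,a\beta+b)$. Your explicit check that $a\beta+b\ge 1$ is a nice touch the paper leaves implicit, but it does not change the argument.
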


\begin{proof}
With $f$ and $\vph$ denoting the pgf~of $N$ and the Laplace transform of the $Y_{n}$, respectively, the random sum in \eqref{eq:random sum law CPF} has Laplace transform $f\circ\vph$. A combination of the functional equations \eqref{eq2:def PF gf} and \eqref{eq:CPF mixing form LT} now yields the assertion.\qed
\end{proof}

The second lemma reveals a connection between continuous power-fract\-ional, Sibuya and Mittag-Leffler distributions of order $\theta$. The latter have the Laplace transform $\psi_{\theta}(u)=(1+u^{\theta})^{-1}$ for some $\theta\in (0,1]$ and are abbreviated as $\ML(\theta)$ here. When $\theta=1$, this reduces to the standard exponential distribution.

\begin{Lemma}
Let $\theta\in (0,1]$ and $S,X,Y_{1},Y_{2},\ldots$ be independent random variables such that $S\eqdist\Sib(\theta)$, $X\eqdist\ML(\theta)$, and the $Y_{n}$ have common law $\CPF_{+}(\theta,1)$. Then
\begin{equation}
X\ \eqdist\ \sum_{n=1}^{S}Y_{n},
\end{equation}
that is, the Mittag-Leffler distribution can be represented as a Sibuya sum of  continuous power-fractional variables on $(0,\infty)$.
\end{Lemma}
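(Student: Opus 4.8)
The plan is to prove the identity at the level of Laplace transforms, exploiting the Sibuya pgf from \eqref{eq:pgf Sib} and the continuous power-fractional Laplace transform $\vph_{\theta,1}$ from \eqref{eq:CPFplus LT}. If $\zeta(u)$ denotes the common Laplace transform of the $Y_n$, i.e.\ $\zeta(u)=\vph_{\theta,1}(u)=1-\bigl(u^{\theta}/(u^{\theta}+1)\bigr)^{1/\theta}$, and $g(s)=1-(1-s)^{\theta}$ is the pgf of $\Sib(\theta)$, then the random sum $\sum_{n=1}^{S}Y_n$ has Laplace transform $g\circ\zeta$. The goal is therefore to show $g\circ\zeta(u)=\psi_{\theta}(u)=(1+u^{\theta})^{-1}$.

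The computation is direct: from the definition of $\zeta$ we get $1-\zeta(u)=\bigl(u^{\theta}/(u^{\theta}+1)\bigr)^{1/\theta}$, hence $(1-\zeta(u))^{\theta}=u^{\theta}/(u^{\theta}+1)$, and consequently
\begin{equation*}
g(\zeta(u))\ =\ 1-(1-\zeta(u))^{\theta}\ =\ 1-\frac{u^{\theta}}{u^{\theta}+1}\ =\ \frac{1}{1+u^{\theta}}\ =\ \psi_{\theta}(u).
\end{equation*}
Since $\psi_{\theta}$ is the Laplace transform of $\ML(\theta)$ and Laplace transforms determine laws on $[0,\infty)$, the asserted distributional identity follows. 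First I would recall the relevant formulas, then observe that $g\circ\zeta$ is the Laplace transform of the Sibuya sum (using independence of $S$ from $(Y_n)$ and the standard random-sum formula), then perform the one-line simplification above, and finally invoke uniqueness of Laplace transforms.

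There is essentially no obstacle here; the only point that deserves a word is the well-definedness of the random sum, i.e.\ that $S\eqdist\Sib(\theta)$ takes values in $\N$ so that $\sum_{n=1}^{S}Y_n$ is an (a.s.\ positive) finite sum of the positive variables $Y_n$, and that the resulting mixed transform is indeed $g\circ\zeta$ evaluated at $u\ge 0$ with $g$ the pgf of $S$. One might also remark that this identity is the natural Laplace-transform shadow of the conjugation rule \eqref{eq:conjugation rule with Sibuya}: applying a Sibuya sum to the ``elementary'' continuous power-fractional law $\CPF_{+}(\theta,1)$ collapses the power-fractional structure back to the Mittag-Leffler law, just as a Sibuya sum of $\PF(\theta,a,b)$ variables reduces to an $\LF(a,b)$ sum in the discrete setting. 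Since the verification is a short algebraic manipulation, I would keep the proof to the few lines indicated above.
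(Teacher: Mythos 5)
Your proof is correct and follows exactly the route the paper intends: its proof simply says the identity "follows again by a look at Laplace transforms," and your computation $g\circ\zeta(u)=1-(1-\zeta(u))^{\theta}=1-u^{\theta}/(u^{\theta}+1)=(1+u^{\theta})^{-1}=\psi_{\theta}(u)$ supplies precisely the omitted details. Nothing further is needed.
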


\begin{proof}
This follows again by a look at Laplace transforms. The simple details can be omitted.\qed
\end{proof}

\vspace{.2cm}\noindent
\textbf{\emph{2. Subcritical case.}} This case arises when $a>1$, and the following theorem presents the well-known results regarding the behavior of the survival probability and the asymptotic law of the process conditioned on survival, often referred to as the Yaglom law.

\begin{Theorem}\label{thm:subcritical fixed env}
Let $(Z_{n})_{n\ge 0}$ be a subcritical power-fractional \GWP\ with one ancestor, offspring distribution $\PF(\theta,a,b)$ and offspring pgf~$f$. Then the following assertions hold:
\begin{itemize}\itemsep2pt
\item[(a)] The survival probability $\Prob(Z_{n}>0)$ satisfies
\begin{equation}\label{eq:surv probab fixed environment}
\lim_{n\to\infty}a^{n/\theta}\,\Prob(Z_{n}>0)\ =\ \left(\frac{a-1}{a+b-1}\right)^{1/\theta}.
\end{equation}
\item[(b)] The quasi-stationary limit law of the process when conditioned upon survival is again power-fractional, namely
\begin{equation}\label{eq:Yaglom law fixed environment}
\lim_{n\to\infty}\Prob(Z_{n}\in\cdot|Z_{n}>0)\ =\ \PF\left(\theta,\frac{a-1}{a+b-1},\frac{b}{a+b-1}\right)
\end{equation}
in the sense of total variation convergence.
\end{itemize}
\end{Theorem}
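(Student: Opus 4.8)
The plan is to exploit the explicit iteration formula \eqref{eq:iteration of f(s)} for $\theta\in(0,1]$ and $\gamma=1$, which reads $f^{n}(s)=1-(1-s)\big[a^{n}+b_{n}(1-s)^{\theta}\big]^{-1/\theta}$ with $b_{n}=b(1-a^{n})/(1-a)$. Since $a>1$, we have $a^{n}\to\infty$ and $b_{n}\to b/(a-1)$ (from above, since $b_{n}=b(a^{n}-1)/(a-1)$; note $b_{n}\uparrow$ but here with $a>1$ it is increasing to $+\infty$ — more carefully, $b_{n}=b\sum_{k=0}^{n-1}a^{k}\sim b\,a^{n-1}/(a-1)$, so the relevant combination is $a^{-n}b_{n}\to b/(a(a-1))$; I will keep the bookkeeping in terms of $a^{-n}$ throughout). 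For part (a), write $\Prob(Z_{n}>0)=1-f^{n}(0)=\big[a^{n}+b_{n}\big]^{-1/\theta}$, multiply by $a^{n/\theta}$ to get $\big[1+a^{-n}b_{n}\big]^{-1/\theta}$, and let $n\to\infty$; since $a^{-n}b_{n}=b(1-a^{-n})/(a-1)\to b/(a-1)$, the limit is $\big[1+b/(a-1)\big]^{-1/\theta}=\big[(a+b-1)/(a-1)\big]^{-1/\theta}=\big((a-1)/(a+b-1)\big)^{1/\theta}$, which is \eqref{eq:surv probab fixed environment}.

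For part (b), I would compute the pgf of the conditioned law directly. The conditional pgf is
\[
\Erw\big(s^{Z_{n}}\,\big|\,Z_{n}>0\big)\ =\ \frac{f^{n}(s)-f^{n}(0)}{1-f^{n}(0)}\ =\ 1-\frac{1-f^{n}(s)}{1-f^{n}(0)}.
\]
Using \eqref{eq:iteration of f(s)}, $\dfrac{1-f^{n}(s)}{1-f^{n}(0)}=(1-s)\,\dfrac{\big[a^{n}+b_{n}\big]^{1/\theta}}{\big[a^{n}+b_{n}(1-s)^{\theta}\big]^{1/\theta}}=(1-s)\,\dfrac{\big[1+a^{-n}b_{n}\big]^{1/\theta}}{\big[1+a^{-n}b_{n}(1-s)^{\theta}\big]^{1/\theta}}$. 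Letting $n\to\infty$ and writing $\beta:=b/(a-1)$, this converges pointwise on $[0,1)$ (indeed uniformly on compacts) to
\[
(1-s)\,\frac{(1+\beta)^{1/\theta}}{\big[1+\beta(1-s)^{\theta}\big]^{1/\theta}}\ =\ \frac{1-s}{\big[(1+\beta)^{-1}+(1+\beta)^{-1}\beta(1-s)^{\theta}\big]^{1/\theta}}.
\]
Hence the limiting conditional pgf is $\widetilde f(s)=1-(1-s)\big[\widetilde a+\widetilde b(1-s)^{\theta}\big]^{-1/\theta}$ with $\widetilde a=(1+\beta)^{-1}=(a-1)/(a+b-1)$ and $\widetilde b=\beta(1+\beta)^{-1}=b/(a+b-1)$; comparing with the compact form \eqref{eq:compact def of f(s)} for $\theta\in(0,1]$, $\gamma=1$, this is exactly the pgf of $\PF\big(\theta,(a-1)/(a+b-1),b/(a+b-1)\big)$. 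One checks $\widetilde a+\widetilde b=(a+b-1)/(a+b-1)=1\ge 1$, so (A1) holds and the limit is a genuine (non-defective) probability law on $\N_{0}$, and $\widetilde f(0)=1-\widetilde{a}^{-1/\theta}<1$ shows it puts no mass at $0$, as it should for a Yaglom limit.

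The only remaining point is to upgrade pointwise convergence of pgf's on $[0,1)$ — which gives convergence in distribution on $\N_{0}$ and hence, by Scheffé's lemma, convergence in total variation — to the total-variation statement as written; this is the standard fact that for $\N_{0}$-valued laws, vague convergence of the pgf's on $[0,1)$ together with the limit being a proper probability law forces pointwise convergence of the mass functions and therefore total-variation convergence. This last step is routine and I expect no obstacle; the only place one must be slightly careful is confirming that the limiting law has total mass $1$ (no escape of mass to $\infty$), which is precisely the computation $\widetilde a+\widetilde b=1$ above, guaranteeing $\widetilde f(1^-)=1$. I do not anticipate a genuine difficulty anywhere — the subcritical theory here is a direct consequence of the closed-form iterates — so the "hard part" is merely organizing the algebra of the $a^{-n}b_{n}$ limits cleanly.
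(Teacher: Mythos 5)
Your proof is correct and follows essentially the same route as the paper: part (a) is the identical computation via $a^{-n}b_{n}\to b/(a-1)$, and in part (b) you simply inline the algebra that the paper packages as Lemma \ref{lem:conditional law PF} (the conditional law of a $\PF$ law given positivity is again $\PF$) before passing to the limit, upgrading pointwise pgf convergence to total variation by Scheff\'e where the paper instead notes the exact $\PF$ form of $\Prob(Z_n\in\cdot\,|\,Z_n>0)$ with converging parameters. Two harmless slips in asides do not affect the argument: in the opening parenthetical it should be $b_{n}\sim b\,a^{n}/(a-1)$, consistent with the limit $a^{-n}b_{n}\to b/(a-1)$ you actually use, and the value at zero of the limiting pgf is $\widetilde f(0)=1-(\widetilde a+\widetilde b)^{-1/\theta}=0$ rather than $1-\widetilde a^{-1/\theta}$.
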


\begin{proof}
(a) With $b_{n}$ as in \eqref{eq:def LF gf} and $Z_{n}\eqdist\PF(\theta,a^{n},b_{n})$, we obtain
\begin{align*}
a^{n/\theta}\,\Prob(Z_{n}>0)\ &=\ a^{n/\theta}\,(1-f^{n}(0))\ =\ \left(\frac{a^{n}}{a^{n}+b_{n}}\right)^{1/\theta}\\
&=\ \left(1+b\sum_{k=0}^{n-1}a^{k-n}\right)^{-1/\theta}\ =\ \left(1+\frac{b(1-a^{-n})}{a-1}\right)^{-1/\theta}
\end{align*}
and thereby \eqref{eq:surv probab fixed environment} upon letting $n\to\infty$.

\vspace{.1cm}
(b) Here we invoke Lemma \ref{lem:conditional law PF} to infer
$$ \Prob(Z_{n}\in\cdot|Z_{n}>0)\ =\ \PF\left(\theta,\frac{a^{n}}{a^{n}+b_{n}},\frac{b_{n}}{a^{n}+b_{n}}\right), $$
and we have just shown for (a) that $a^{n}/(a^{n}+b_{n})\to (a-1)/(a+b-1)$. The asserted convergence is now immediate.\qed
\end{proof}

Note that the above formulae for the survival probability $\Prob(Z_{n}>0)$ and the conditional law $\Prob(Z_{n}\in\cdot|Z_{n}>0)$ hold regardless of the value of $a$.

\vspace{.2cm}\noindent
\textbf{\emph{2. Critical case.}} Finally turning to the critical case when $a=1$, we recall from Table \ref{tab:1.3} that the offspring variance is infinite unless the law is linear-fractional and thus $\theta=1$. By drawing on the proof of Theorem \ref{thm:subcritical fixed env}, but with $a=1$ and thus $b_{n}=bn$, we obtain:

\begin{Theorem}\label{thm:critical fixed env}
Let $(Z_{n})_{n\ge 0}$ be a critical power-fractional \GWP\ with one ancestor, offspring distribution $\PF(\theta,1,b)$ and offspring pgf~$f$. Then the following assertions hold:
\begin{itemize}\itemsep2pt
\item[(a)] The survival probability $\Prob(Z_{n}>0)$ satisfies
\begin{gather}\label{eq:surv probab fixed environment}
\lim_{n\to\infty}n^{1/\theta}\,\Prob(Z_{n}>0)\ =\ b^{-1/\theta}.
\shortintertext{Furthermore,}
\lim_{n\to\infty}n^{-1/\theta}\,\Erw(Z_{n}|Z_{n}>0)\ =\ b^{1/\theta}.\label{eq:cond mean fixed environment}
\end{gather}
\item[(b)] The conditional law of $(bn)^{-1/\theta}Z_{n}$ given $Z_{n}>0$ converges weakly as $n\to\infty$, namely
\begin{equation}\label{eq:Kolmogorov-Yaglom limit fixed env}
\Prob\left(\frac{Z_{n}}{(bn)^{1/\theta}}\in\cdot\bigg|Z_{n}>0\right)\ \weakly\ \CPF_{+}(\theta,1).
\end{equation}
\end{itemize}
\end{Theorem}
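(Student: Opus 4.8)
The plan is to leverage the explicit form of the iterated pgf $f^{n}$ for the critical case $a=1$, where $b_{n}=bn$, exactly as in the proof of Theorem~\ref{thm:subcritical fixed env}, and then pass to the continuous limit. Since $f^{n}$ is the pgf of $\PF(\theta,1,bn)$, we have from \eqref{eq:iteration of f(s)} (in the case $\theta\in(0,1]$ with $\gamma=1$, $a=1$) that
\begin{equation*}
\frac{1}{(1-f^{n}(s))^{\theta}}\ =\ \frac{1}{(1-s)^{\theta}}\,+\,bn,\quad s\in[0,1).
\end{equation*}
Part~(a): The first assertion is obtained by setting $s=0$, giving $\Prob(Z_{n}>0)=1-f^{n}(0)=(1+bn)^{-1/\theta}$, so that $n^{1/\theta}\Prob(Z_{n}>0)\to b^{-1/\theta}$. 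For the conditional mean, I would use $\Erw(Z_{n}\mathbf{1}_{\{Z_{n}>0\}})=\Erw Z_{n}=\sfm^{n}=1$ (since $a=1$ forces $\sfm=1$; see Table~\ref{tab:1.3}), hence $\Erw(Z_{n}|Z_{n}>0)=1/\Prob(Z_{n}>0)=(1+bn)^{1/\theta}$, and therefore $n^{-1/\theta}\Erw(Z_{n}|Z_{n}>0)\to b^{1/\theta}$.

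Part~(b): This is the substantive part. I would compute the Laplace transform of the conditional law of $W_{n}:=(bn)^{-1/\theta}Z_{n}$ given $Z_{n}>0$. Writing $\lambda_{n}(u):=\Erw(e^{-uW_{n}}\,|\,Z_{n}>0)$, the standard conditioning identity gives
\begin{equation*}
1-\lambda_{n}(u)\ =\ \frac{1-\Erw e^{-uW_{n}}}{\Prob(Z_{n}>0)}\ =\ \frac{1-f^{n}\big(e^{-u/(bn)^{1/\theta}}\big)}{(1+bn)^{-1/\theta}}.
\end{equation*}
Applying the functional identity above with $s=e^{-u/(bn)^{1/\theta}}$ yields
\begin{equation*}
\frac{1}{\big(1-f^{n}(s)\big)^{\theta}}\ =\ \frac{1}{\big(1-e^{-u/(bn)^{1/\theta}}\big)^{\theta}}\,+\,bn.
\end{equation*}
Since $1-e^{-u/(bn)^{1/\theta}}\simeq u/(bn)^{1/\theta}$ as $n\to\infty$, the first term on the right behaves like $(bn)/u^{\theta}$, so the whole right-hand side is $\simeq bn\,(u^{-\theta}+1)$. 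Hence $(1-f^{n}(s))^{-\theta}\simeq bn\,(u^{-\theta}+1)$, i.e.\ $1-f^{n}(s)\simeq (bn)^{-1/\theta}(u^{-\theta}+1)^{-1/\theta}$, and therefore
\begin{equation*}
1-\lambda_{n}(u)\ \longrightarrow\ (1+bn)^{1/\theta}(bn)^{-1/\theta}\big(u^{-\theta}+1\big)^{-1/\theta}\ \longrightarrow\ \big(u^{-\theta}+1\big)^{-1/\theta}\ =\ \bigg(\frac{u^{\theta}}{u^{\theta}+1}\bigg)^{1/\theta},
\end{equation*}
so $\lambda_{n}(u)\to 1-(u^{\theta}/(u^{\theta}+1))^{1/\theta}=\vph_{\theta,1}(u)$, which by \eqref{eq:CPFplus LT} is precisely the Laplace transform of $\CPF_{+}(\theta,1)$. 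Convergence of Laplace transforms on $[0,\infty)$ to the Laplace transform of a probability law on $[0,\infty)$ gives the asserted weak convergence.

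The only genuine obstacle is bookkeeping the asymptotic equivalences carefully — in particular confirming that the error in $1-e^{-u/(bn)^{1/\theta}}\simeq u/(bn)^{1/\theta}$ is negligible after raising to the power $\theta$ and adding $bn$, which is immediate since both the main term and the additive $bn$ are of order $n$ while the correction is of lower order; and checking the $u=0$ case separately (where $\lambda_{n}(0)=1$ trivially and $\vph_{\theta,1}(0)=1$), which the display above handles by continuity. Everything else is a direct substitution into the explicit formula for $f^{n}$, exactly parallel to the subcritical computation, with $a^{n}$ replaced by $1$ and $b_{n}$ by $bn$.
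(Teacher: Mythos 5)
Your proof is correct and follows essentially the same route as the paper: both rest on the explicit functional equation $\bigl(1-f^{n}(s)\bigr)^{-\theta}=(1-s)^{-\theta}+bn$, the identity $\Prob(Z_{n}>0)=(1+bn)^{-1/\theta}$ together with $\Erw Z_{n}=1$ for part (a), and convergence of the Laplace transform of the rescaled conditional law to $1-\bigl[u^{-\theta}+1\bigr]^{-1/\theta}$, recognized via \eqref{eq:CPFplus LT} as that of $\CPF_{+}(\theta,1)$. The only (cosmetic) difference is that the paper first conditions, using Lemma \ref{lem:conditional law PF} to write $\Prob(Z_{n}\in\cdot\,|\,Z_{n}>0)=\PF\bigl(\theta,(1+bn)^{-1},1-(1+bn)^{-1}\bigr)$, and then transforms, whereas you transform first and divide by the survival probability — the computations are equivalent.
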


In the linear-fractional case $\theta=1$, Eq.\,\eqref{eq:Kolmogorov-Yaglom limit fixed env} reduces to the familiar Kolmo\-gorov-Yaglom exponential limit theorem after pointing out that $\CPF_{+}(1,1)$ equals the standard exponential distribution and $2b=\Var Z_{1}$.

\begin{proof}
(a) By drawing on the formulae pointed out in the proof of Theorem \ref{thm:subcritical fixed env}, but now with $a=1$ and $b_{n}=bn$, we obtain
$$ n^{1/\theta}\,\Prob(Z_{n}>0)\ =\ \frac{n^{1/\theta}}{\Erw(Z_{n}|Z_{n}>0)}\ =\ \left(\frac{n}{1+bn}\right)^{1/\theta} $$
and then obviously the asserted limits in \eqref{eq:surv probab fixed environment} and \eqref{eq:cond mean fixed environment}.

\vspace{.1cm}
(b) Put $Y_{n}=(bn)^{-1/\theta}Z_{n}$ and $h_{n}(s)=\Erw(s^{Z_{n}}|Z_{n}>0)$ for $n\in\N_{0}$.
Since $\Prob(Z_{n}\in\cdot|Z_{n}>0)=\PF(\theta,(1+bn)^{-1},1-(1+bn)^{-1})$, we infer
\begin{gather*}
\begin{split}
\frac{1}{(1-\Erw(e^{-uY_{n}}|Z_{n}>0))^{\theta}}\ &=\ \frac{1}{(1-h_{n}(e^{-u/(bn)^{1/\theta}})^{\theta}}\\
&=\ \frac{(1+bn)^{-1}}{(1-e^{-u/(bn)^{1/\theta}})^{\theta}}\,+\,\frac{bn}{1+bn}
\end{split}
\shortintertext{or, equivalently,}
\Erw(e^{-uY_{n}}|Z_{n}>0)\ =\ 1\,-\,\left[\frac{(1+bn)^{-1}}{(1-e^{-u/(bn)^{1/\theta}})^{\theta}}\,+\,\frac{bn}{1+bn}\right]^{-1/\theta}
\end{gather*}
for any $u\ge 0$. But the last expression converges to $1-[u^{-\theta}+1]^{-1/\theta}$ which, by \eqref{eq:CPFplus LT}, equals the Laplace transform of $\CPF_{+}(\theta,1)$.\qed
\end{proof}


\section{Basic results for power-fractional \GWP's in random environment}

In the following, let $(Z_{n})_{n\ge 0}$ be a power-fractional GWP\ with one ancestor in the iid random environment $\bfe=(A_{n},B_{n})_{n\ge 0}$ as described earlier, i.e., for individuals in generation $n-1$ the conditional offspring law given $\bfe$ is $\PF(\theta,A_{n},B_{n})$ with pgf~$f_{n}=f(A_{n},B_{n},\cdot)$ for each $n\ge 1$, where $\theta\in (0,1]$ and
\begin{equation}\label{eq:basic parameter condition}
\Prob(A_{n}>0,B_{n}>0\text{ and }A_{n}+B_{n}\ge 1)\,=\,1.
\end{equation}
The next three theorems can be viewed as the counterparts to those in Section for fixed environment.

\begin{Theorem}\label{thm:supercritical iid env}
Let $(Z_{n})_{n\ge 0}$ be supercritical and thus $R_{\infty}<\infty=R_{\infty}^{(-1)}$ a.s. Then the following assertions hold:
\begin{itemize}\itemsep2pt
\item[(a)] The quenched extinction probability $q(\bfe)$ is given by
\begin{equation}\label{eq:ext probab varying environment}
q(\bfe)\ =\ 1\,-\,R_{\infty}^{-1/\theta}\quad\text{a.s.}
\end{equation}
and $\Prob(q(\bfe)<1)=1$. Moreover,
\begin{equation}\label{eq:functional eq q(bfe)}
\frac{1}{(1-q(\bfe))^{\theta}}\ =\ \frac{A_{1}}{(1-q(\bfe_{\ssy\geq 2}))^{\theta}}\,+\,B_{1}\quad\text{a.s.}
\end{equation}
\item[(b)] For almost all realizations of $\bfe$, the sequence $W_{n}=\Pi_{n}^{1/\theta}Z_{n}$, $n\ge 0$,  forms a nonnegative martingale with mean 1 under the quenched probability measure $\bfP$ and, as $n\to\infty$, it converges a.s.~and in $L^{1}$ to a random variable $W_{\infty}$ with quenched law $\CPF(\theta,1,R_{\infty})$ having Laplace transform
\begin{gather}\label{eq:LT of W VE}
\vph(\bfe,u)\ =\ \bfE e^{-uW_{\infty}}\ =\ 1\,-\,\left[\frac{1}{u^{\theta}}+R_{\infty}\right]^{-1/\theta},\quad u\ge 0,
\shortintertext{which in turn satisfies}
\vph(\bfe,u)\,=\,f_{1}\circ\vph\big(\bfe_{\ssy\geq 2},A_{1}^{1/\theta}u\big)\quad\text{a.s.}
\end{gather}
for all $u\ge 0$ and $n\in\N$.
\end{itemize}
\end{Theorem}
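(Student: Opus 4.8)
The plan is to extract everything from the explicit quenched formulas already at our disposal — \eqref{eq:survival probab quenched} for the extinction probability and \eqref{eq:basic identity for f_1:n} for the quenched pgf of $Z_{n}$ — together with the Goldie--Maller dichotomy of Proposition \ref{prop:GoldieMaller}. Since supercriticality here \emph{means} that case (C1) holds, Proposition \ref{prop:GoldieMaller}(a) supplies the two asymptotic facts we will lean on throughout: $\Pi_{n}\to 0$ a.s.\ and $R_{n}\uparrow R_{\infty}<\infty$ a.s.; recall also $R_{\infty}\ge 1$ a.s.

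For part (a) I would simply let $n\to\infty$ in \eqref{eq:survival probab quenched}, which gives $q(\bfe)=\lim_{n}\big(1-(\Pi_{n}+R_{n})^{-1/\theta}\big)=1-R_{\infty}^{-1/\theta}$ a.s.; because $1\le R_{\infty}<\infty$ a.s., the right-hand side lies in $[0,1)$, so $\Prob(q(\bfe)<1)=1$. For \eqref{eq:functional eq q(bfe)} the quickest route is to split off the first generation: $q_{n}(\bfe_{1:n})=f_{1:n}(0)=f_{1}\big(f_{2:n}(0)\big)=f_{1}\big(q_{n-1}(\bfe_{2:n})\big)$, and letting $n\to\infty$ (using continuity of the pgf $f_{1}$ on $[0,1]$ and $q_{n-1}(\bfe_{2:n})\to q(\bfe_{\ssy\geq 2})$) yields $q(\bfe)=f_{1}\big(q(\bfe_{\ssy\geq 2})\big)$ a.s.; inserting this into the functional equation \eqref{eq2:def PF gf} for $f_{1}$ produces exactly \eqref{eq:functional eq q(bfe)}. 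Equivalently, one may use the perpetuity recursion $R_{\infty}=B_{1}+A_{1}R_{\infty}(\bfe_{\ssy\geq 2})$ together with $(1-q(\bfe))^{-\theta}=R_{\infty}$.

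For part (b), the martingale property is the usual branching argument: conditionally on $\bfe$ and on generation $n$, $Z_{n+1}$ is a sum of $Z_{n}$ independent copies with pgf $f_{n+1}$, whose mean is $f_{n+1}'(1)=A_{n+1}^{-1/\theta}$ by \eqref{eq:fprime}; hence $\bfE(Z_{n}\mid\bfe)=\Pi_{n}^{-1/\theta}$ and $(W_{n})_{n\ge 0}$ is a nonnegative $\bfP$-martingale of mean $1$, so for a.e.\ $\bfe$ it converges $\bfP$-a.s.\ to some $W_{\infty}$. To identify the quenched limit law I would compute the quenched Laplace transform via \eqref{eq:basic identity for f_1:n}: since $\bfE(e^{-uW_{n}}\mid\bfe)=f_{1:n}\big(e^{-u\Pi_{n}^{1/\theta}}\big)$,
\[
\big(1-\bfE(e^{-uW_{n}}\mid\bfe)\big)^{-\theta}\ =\ \frac{\Pi_{n}}{\big(1-e^{-u\Pi_{n}^{1/\theta}}\big)^{\theta}}\,+\,R_{n}.
\]
Because $\Pi_{n}\to 0$ a.s., one has $1-e^{-u\Pi_{n}^{1/\theta}}\simeq u\Pi_{n}^{1/\theta}$, so the first term tends to $u^{-\theta}$ and the right-hand side to $u^{-\theta}+R_{\infty}$ a.s.; thus $\bfE(e^{-uW_{n}}\mid\bfe)\to 1-(u^{-\theta}+R_{\infty})^{-1/\theta}$ a.s. Since $e^{-uW_{n}}\to e^{-uW_{\infty}}$ a.s.\ and is bounded by $1$, dominated convergence under $\bfP$ gives $\vph(\bfe,u)=\bfE(e^{-uW_{\infty}}\mid\bfe)=1-(u^{-\theta}+R_{\infty})^{-1/\theta}$, which by \eqref{eq:CPF functional equation} is the Laplace transform of $\CPF(\theta,1,R_{\infty})$; note $\vph(\bfe,0+)=1$, so this is a proper law. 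The $L^{1}$-convergence then follows by Scheffé's lemma once $\bfE(W_{\infty}\mid\bfe)=1$ is checked, and this comes from the expansion $\vph(\bfe,u)=1-u(1+R_{\infty}u^{\theta})^{-1/\theta}=1-u+O(u^{1+\theta})$ as $u\downarrow 0$, giving $-\vph'(\bfe,0+)=1=\bfE(W_{n}\mid\bfe)$. Finally, Abel's equation $\vph(\bfe,u)=f_{1}\circ\vph\big(\bfe_{\ssy\geq 2},A_{1}^{1/\theta}u\big)$ is verified directly: applying \eqref{eq2:def PF gf} to $f_{1}$ with $s=\vph\big(\bfe_{\ssy\geq 2},A_{1}^{1/\theta}u\big)$, for which $(1-s)^{-\theta}=A_{1}^{-1}u^{-\theta}+R_{\infty}(\bfe_{\ssy\geq 2})$, and using $R_{\infty}=B_{1}+A_{1}R_{\infty}(\bfe_{\ssy\geq 2})$, turns the right-hand side into $1-(u^{-\theta}+R_{\infty})^{-1/\theta}=\vph(\bfe,u)$.

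The only genuinely delicate point is the supercritical asymptotics $\Pi_{n}\to 0$ a.s.\ and the attendant interchange of limits in the Laplace-transform computation; both are handled by invoking Proposition \ref{prop:GoldieMaller}(a) (equivalently, the characterization of case (C1)), after which everything reduces to elementary asymptotics and the functional equation \eqref{eq2:def PF gf}. The remaining ingredients — the branching martingale identity, Scheffé's lemma, and the algebraic check of Abel's equation — are routine.
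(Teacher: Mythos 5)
Your proposal is correct and follows essentially the same route as the paper: take the $n\to\infty$ limit in the explicit quenched formula \eqref{eq:survival probab quenched} for part (a), and compute the quenched Laplace transform of $W_{n}$ via \eqref{eq:basic identity for f_1:n} using $\Pi_{n}\to 0$ and $R_{n}\uparrow R_{\infty}<\infty$ for part (b). The only differences are cosmetic: you prove the recursion $q(\bfe)=f_{1}(q(\bfe_{\ssy\geq 2}))$ directly instead of citing Athreya--Karlin (the paper also adds a shift-invariance/ergodicity remark that your explicit-limit argument makes unnecessary), and you spell out the $L^{1}$-convergence via Scheff\'e and the algebraic check of Abel's equation, which the paper leaves as ``easily verified.''
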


\begin{proof}
(a) Going back to Athreya and Karlin \cite{AthreyaKarlin:71a}, it is known that
\begin{equation}\label{eq2:functional eq q(bfe)}
q(\bfe_{\geq 1})\ =\ f_{1}(q(\bfe_{\geq 2}))\quad\text{a.s.}
\end{equation}
and that $\{q(\bfe_{\geq 1})=1\}$ is a.s. shift-invariant, i.e.
$$ \{q(\bfe_{\geq 1})=1\}=\{q(\bfe_{\geq 2})=1\}\quad\text{a.s.} $$
By ergodicity of $\bfe$, it has probability 0 or 1, and since, by \eqref{eq:survival probab quenched},
\begin{equation*}
q(\bfe)\ =\ \lim_{n\to\infty}q_{n}(\bfe_{1:n})\ =\ 1\,-\,\lim_{n\to\infty}(\Pi_{n}+R_{n})^{-\theta}\ =\ 1\,-\,R_{\infty}^{-\theta}\quad\text{a.s.}
\end{equation*}
under the given assumptions, we see that \eqref{eq:ext probab varying environment} and in particular $0<q(\bfe)<1$ a.s. holds. The latter allows us to rewrite \eqref{eq2:functional eq q(bfe)} in the asserted form \eqref{eq:functional eq q(bfe)}.

\vspace{.1cm}
(b) Since $\Pi_{n}^{-1/\theta}$ equals the quenched mean of $Z_{n}$, it is a well-known fact that $(W_{n})_{n\ge 0}$ as the quenched normalization sequence constitutes a $\bfP$-martingale. To derive the Laplace transform of its a.s.~and $L^{1}$-limit $W_{\infty}$, one can proceed in a similar manner as in the proof of Theorem \ref{thm:supercritical fixed env}(b), namely
\begin{align*}
\frac{1}{(1-\bfE e^{-uW_{n}})^{\theta}}\ &=\ \frac{1}{\big(1-f_{1:n}(e^{-u\Pi_{n}^{1/\theta}})\big)^{\theta}}\ =\ \left[\frac{\Pi_{n}^{1/\theta}}{1-e^{-u\Pi_{n}^{1/\theta}}}\right]^{\theta}\,+\,R_{n}\\
&\xrightarrow{n\to\infty}\ \frac{1}{u^{\theta}}\,+\,R_{\infty}\quad\text{a.s.}
\end{align*}
for any $u\ge 0$. All remaining assertions are now easily verified.\qed
\end{proof}

Let us write $\PF_{+}(\theta,a)$ as shorthand for $\PF(\theta,a,1-a)$, the power-fractional law on the positive integers $\N$ with parameters $\theta\in (0,1]$ and $a\in (0,1)$. In the case $\theta=1$, this law equals the familiar geometric law on $\N$, denoted as $\Geom_{+}(a)$. For the two next results about the subcritical and the critical case we point out beforehand that, by Lemma \ref{lem:conditional law PF} and \eqref{eq:quenched law Z_n}, the conditional law under $\bfP$ of $Z_{n}$ given $Z_{n}>0$ equals a.s.
\begin{equation}
\bfP(Z_{n}\in\cdot|Z_{n}>0)\ =\ \PF_{+}\left(\theta,\frac{1}{1+R_{n}/\Pi_{n}}\right)
\end{equation}
for each $n\in\N$, and under the reversed environment $\bfe_{n:1}$, i.e., under $\bfP^{(n:1)}$ instead of $\bfP^{(1:n)}$ (or $\bfP$, which makes no difference),
the same result holds with $R_{n}^{(-1)}$ in the place of $R_{n}/\Pi_{n}$.
Let $h_{n}(s)=h(\bfe_{1:n},s)$ and $h_{n}^{*}(s)=h(\bfe_{n:1},s)$ denote the associated pgf's.

\begin{Theorem}\label{thm:subcritical iid env}
Let $(Z_{n})_{n\ge 0}$ be subcritical and thus $R_{\infty}^{(-1)}<\infty=R_{\infty}$ a.s. Then the following assertions hold as $n\to\infty$ and for almost all realizations of the environment $\bfe$:
\begin{align}
&\Pi_{n}^{1/\theta}\,\bfP(Z_{n}>0)\ =\ \frac{1}{(1+R_{n}/\Pi_{n})^{1/\theta}}\ \idist\ \frac{1}{(1+R_{\infty}^{(-1)})^{1/\theta}},\label{eq:surv probab subcritical}\\
&\hspace{.2cm}\bfE(Z_{n}|Z_{n}>0)\ =\ (1+R_{n}/\Pi_{n})^{1/\theta}\ \idist\ (1+R_{\infty}^{(-1)})^{1/\theta},
\shortintertext{and}
&\hspace{.3cm}\left\|\bfP^{(n:1)}(Z_{n}\in\cdot|Z_{n}>0)-\PF_{+}\left(\theta,\frac{1}{1+R_{\infty}^{(-1)}}\right)\right\|\ \to\ 0.\label{eq:tv convergence subcritical}
\end{align}
\end{Theorem}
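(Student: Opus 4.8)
\textbf{Plan for the proof of Theorem \ref{thm:subcritical iid env}.}
The strategy is to reduce all three assertions to properties of the affine recursions $g_{1:n}$ summarized in Proposition \ref{prop:GoldieMaller}, exploiting the exact formulae for the quenched laws of $Z_n$ together with the duality $R_n/\Pi_n = g_{n:1}^{(-1)}(0) \eqdist g_{1:n}^{(-1)}(0) = R_n^{(-1)}$. The subcritical assumption means (C2) holds, i.e.\ $R_\infty^{(-1)} < \infty = R_\infty$ a.s., so by part (b) of Proposition \ref{prop:GoldieMaller} (applied with $\Pi_n \to \infty$) the backward iterations $g_{1:n}^{(-1)}(0) = R_n^{(-1)}$ converge a.s.\ to the finite perpetuity $R_\infty^{(-1)}$, while the forward iterations $g_{n:1}^{(-1)}(0) = R_n/\Pi_n$ converge \emph{in law} to the same limit $R_\infty^{(-1)}$. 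This is precisely why the first two displays are stated as convergence in distribution rather than a.s.\ convergence: the unreversed ratio $R_n/\Pi_n$ does not converge pathwise, only in law.

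\textbf{First steps.}
For \eqref{eq:surv probab subcritical}, I would start from the exact quenched survival probability \eqref{eq:survival probab quenched}, $q_n(\bfe_{1:n}) = 1 - (\Pi_n + R_n)^{-1/\theta}$, so that
\begin{equation*}
\Pi_n^{1/\theta}\,\bfP(Z_n>0)\ =\ \Pi_n^{1/\theta}\big(\Pi_n+R_n\big)^{-1/\theta}\ =\ \big(1 + R_n/\Pi_n\big)^{-1/\theta}.
\end{equation*}
Since $x \mapsto (1+x)^{-1/\theta}$ is bounded and continuous on $[0,\infty)$ and $R_n/\Pi_n \idist R_\infty^{(-1)}$ by Proposition \ref{prop:GoldieMaller}(b), the continuous mapping theorem gives the claimed convergence in law. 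The same computation with $\bfE(Z_n\mid Z_n>0) = \bfP(Z_n>0)^{-1} = (1 + R_n/\Pi_n)^{1/\theta}$ (using that the quenched mean of $Z_n$ on $\{Z_n>0\}$ is the reciprocal of the survival probability, which holds here because $\Pi_n^{-1/\theta}$ is the quenched mean of $Z_n$ and $Z_n/\Pi_n^{-1/\theta}$ has a $\CPF$-type limit) yields the second display, again by continuous mapping applied to $x \mapsto (1+x)^{1/\theta}$.

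\textbf{The total-variation statement.}
For \eqref{eq:tv convergence subcritical} the key point is the switch to the reversed environment: by the discussion preceding the theorem, $\bfP^{(n:1)}(Z_n \in \cdot \mid Z_n > 0) = \PF_+(\theta, (1+R_n^{(-1)})^{-1})$ a.s., so now the random parameter is $R_n^{(-1)}$, which \emph{does} converge a.s.\ to $R_\infty^{(-1)}$ by Proposition \ref{prop:GoldieMaller}(b). It then suffices to show that the total-variation distance between $\PF_+(\theta,(1+R_n^{(-1)})^{-1})$ and $\PF_+(\theta,(1+R_\infty^{(-1)})^{-1})$ tends to $0$ whenever the parameters converge. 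Since $\PF_+(\theta,a) = \PF(\theta,a,1-a)$ has probability masses $p_k$ that depend continuously on $a\in(0,1)$ (e.g.\ via the explicit formulae \eqref{eq1:formula pn}--\eqref{eq2:formula pn}, with $a+b=1$ so $a/(a+b)=a$), and $a_n := (1+R_n^{(-1)})^{-1} \to a_\infty := (1+R_\infty^{(-1)})^{-1}$ a.s.\ with all values in a compact subinterval of $(0,1)$ eventually (because $R_\infty^{(-1)} \ge 0$ is finite), Scheff\'e's lemma upgrades pointwise convergence of the masses to $L^1$, i.e.\ total-variation, convergence. I expect this last step — verifying that the $\PF_+(\theta,\cdot)$ masses vary continuously and uniformly enough to apply Scheff\'e — to be the only mildly technical point; it is handled cleanly by noting the masses are rational functions of $a$ times $(a+b)^{-1/\theta}$-type factors, hence continuous, and then invoking Scheff\'e. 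Everything else is a direct consequence of Proposition \ref{prop:GoldieMaller} and the continuous mapping theorem.
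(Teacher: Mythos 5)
Your argument follows the paper's proof in essentially the same way: the first two displays rest on the exact quenched formulae combined with the duality $R_{n}/\Pi_{n}\eqdist R_{n}^{(-1)}$ and the a.s.\ (monotone) convergence $R_{n}^{(-1)}\to R_{\infty}^{(-1)}$ under (C2), which is exactly why only convergence in law can be claimed for the unreversed quantities; and the total-variation statement uses the reversed-environment representation $\bfP^{(n:1)}(Z_{n}\in\cdot\,|\,Z_{n}>0)=\PF_{+}\big(\theta,(1+R_{n}^{(-1)})^{-1}\big)$ together with the fact that all these laws live on the same countable set, so convergence of the parameter gives convergence of the point masses and hence, by Scheff\'e, total-variation convergence. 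The paper treats this last step as immediate; your extra detail via \eqref{eq1:formula pn}--\eqref{eq2:formula pn} is harmless but not needed, since pointwise convergence of the pgf's already yields convergence of the masses.

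The one flaw is the intermediate identity $\bfE(Z_{n}\mid Z_{n}>0)=\bfP(Z_{n}>0)^{-1}$. This holds only when the quenched mean of $Z_{n}$ equals $1$ (as in the critical fixed-environment computation you may have had in mind), whereas here $\bfE Z_{n}=\Pi_{n}^{-1/\theta}$, so $\bfP(Z_{n}>0)^{-1}=(\Pi_{n}+R_{n})^{1/\theta}\ne(1+R_{n}/\Pi_{n})^{1/\theta}$ in general. The conclusion you want is nevertheless correct and follows in one line from ingredients you already list: $\bfE(Z_{n}\mid Z_{n}>0)=\bfE Z_{n}/\bfP(Z_{n}>0)=\Pi_{n}^{-1/\theta}(\Pi_{n}+R_{n})^{1/\theta}=(1+R_{n}/\Pi_{n})^{1/\theta}$, or equivalently from the fact that the quenched conditional law is $\PF_{+}\big(\theta,(1+R_{n}/\Pi_{n})^{-1}\big)$, whose mean is $(1+R_{n}/\Pi_{n})^{1/\theta}$ by Table \ref{tab:1.3}. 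With that repair, your proposal matches the paper's proof.
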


\begin{proof}
For the first two assertions it suffices to recall that $\Pi_{n}^{-1}R_{n}\eqdist R_{n}^{(-1)}$ for each $n$, and regarding \eqref{eq:tv convergence subcritical}, we have already pointed out above that
\begin{align*}
\bfP^{(n:1)}(Z_{n}\in\cdot|Z_{n}>0)\ =\ \PF_{+}\left(\theta,\frac{1}{1+R_{n}^{(-1)}}\right)\quad\text{a.s.}
\end{align*}
Since $R_{n}^{(-1)}\to R_{\infty}^{(-1)}$ a.s. and the considered distributions are supported by the same discrete set, the total variation convergence is immediate.\qed
\end{proof}

\begin{Theorem}\label{thm:critical iid env}
Let $(Z_{n})_{n\ge 0}$ be critical and thus $R_{\infty}=R_{\infty}^{(-1)}=\infty$ a.s. Then the following assertions hold as $n\to\infty$:
\begin{gather}
\big(\Pi_{n}R_{n}^{(-1)}\big)^{1/\theta}\,\bfP^{(n:1)}(Z_{n}>0)\ \to\ 1\quad\text{a.s.}\label{eq:surv probab critical}\\
\big(R_{n}^{(-1)}\big)^{1/\theta}\bfE^{(n:1)}(Z_{n}|Z_{n}>0)\ \to\ 1\quad\text{a.s.}\label{eq:surv mean critical}
\intertext{and for almost all realizations of the environment $\bfe$}
{\cblue\bfP^{(n:1)}\left[\frac{Z_{n}}{\big(R_{n}^{(-1)}\big)^{1/\theta}}\in\cdot\Bigg|Z_{n}>0\right]\ \idist\ \CPF_{+}(\theta,1).}\label{eq:asymp law critical}
\end{gather}
\end{Theorem}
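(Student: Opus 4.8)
The plan is to read off all three assertions from the explicit form of the reversed‑environment quenched law, just as in the proof of the constant‑environment Theorem~\ref{thm:critical fixed env}; the feature special to the critical case is the almost sure divergence $R_n^{(-1)}\uparrow R_\infty^{(-1)}=\infty$. By \eqref{eq:quenched law Z_n bw}, under $\bfP^{(n:1)}$ the variable $Z_n$ has law $\PF(\theta,\Pi_n,\Pi_n R_n^{(-1)})$, so its quenched pgf $f_{n:1}$ satisfies
\begin{equation*}
\big(1-f_{n:1}(s)\big)^{-\theta}\ =\ \Pi_n\big[(1-s)^{-\theta}+R_n^{(-1)}\big],\qquad s\in[0,1).
\end{equation*}
Note that $R_n^{(-1)}$, and hence the normalization $c_n:=(R_n^{(-1)})^{1/\theta}$, is $\bfe_{n:1}$‑measurable and is therefore a constant under $\bfP^{(n:1)}$.

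First I would settle the two scalar limits. Putting $s=0$ above gives $\bfP^{(n:1)}(Z_n>0)=1-f_{n:1}(0)=\big[\Pi_n(1+R_n^{(-1)})\big]^{-1/\theta}$, whence $(\Pi_n R_n^{(-1)})^{1/\theta}\,\bfP^{(n:1)}(Z_n>0)=\big(R_n^{(-1)}/(1+R_n^{(-1)})\big)^{1/\theta}\to1$ a.s., which is \eqref{eq:surv probab critical}. Since the quenched mean of $Z_n$ under $\bfP^{(n:1)}$ equals $\prod_{k=1}^n A_k^{-1/\theta}=\Pi_n^{-1/\theta}$ (each factor being the offspring mean given in Table~\ref{tab:1.3}), dividing by the survival probability gives $\bfE^{(n:1)}(Z_n\mid Z_n>0)=(1+R_n^{(-1)})^{1/\theta}$, which together with $R_n^{(-1)}\to\infty$ a.s.\ yields \eqref{eq:surv mean critical}.

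For the limit law \eqref{eq:asymp law critical} the plan is to pass to Laplace transforms, as in the proof of Theorem~\ref{thm:critical fixed env}(b). The conditional pgf of $Z_n$ given $Z_n>0$ is $s\mapsto\big(f_{n:1}(s)-f_{n:1}(0)\big)/\big(1-f_{n:1}(0)\big)$; evaluating it at $s=e^{-u/c_n}$ and inserting the quenched‑law identity, one obtains for fixed $u>0$
\begin{equation*}
\bfE^{(n:1)}\big(e^{-uZ_n/c_n}\mid Z_n>0\big)\ =\ 1\,-\,\bigg(\frac{(1-e^{-u/c_n})^{-\theta}+R_n^{(-1)}}{1+R_n^{(-1)}}\bigg)^{-1/\theta}.
\end{equation*}
On the almost sure event $\{R_\infty^{(-1)}=\infty\}$ we have $c_n\to\infty$, hence $1-e^{-u/c_n}\simeq u/c_n$ and therefore $(1-e^{-u/c_n})^{-\theta}\simeq u^{-\theta}c_n^{\theta}=u^{-\theta}R_n^{(-1)}$; the fraction in the bracket thus tends to $1+u^{-\theta}$, and the whole right‑hand side to $1-(1+u^{-\theta})^{-1/\theta}=\vph_{\theta,1}(u)$, the Laplace transform of $\CPF_+(\theta,1)$ by \eqref{eq:CPFplus LT} (the case $u=0$ being trivial). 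The continuity theorem for Laplace transforms then gives \eqref{eq:asymp law critical} for every realization of $\bfe$ with $R_\infty^{(-1)}=\infty$, i.e.\ for almost all~$\bfe$.

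I do not foresee a real obstacle here: given the closed form \eqref{eq:quenched law Z_n bw}, the whole argument is the same elementary Laplace‑transform computation as in the fixed‑environment case, with its deterministic normalization $(bn)^{1/\theta}$ replaced by the random $c_n=(R_n^{(-1)})^{1/\theta}$. What does need a little care is the $\bfe_{n:1}$‑measurability of $c_n$, so that it may be pulled out of $\bfP^{(n:1)}$; the fact that the critical‑case hypothesis is used only through $R_n^{(-1)}\uparrow\infty$ a.s.; and the reading of the convergence in \eqref{eq:asymp law critical}, namely that for each fixed realization of $\bfe$ its left‑hand side is a deterministic sequence of conditional laws and the claim is that this sequence converges weakly — which is precisely what the Laplace‑transform limit delivers.
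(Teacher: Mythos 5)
Your proposal is correct and takes essentially the same route as the paper: read all three assertions off the explicit reversed-environment quenched law $\PF(\theta,\Pi_n,\Pi_n R_n^{(-1)})$, use $R_n^{(-1)}\uparrow\infty$ a.s., and obtain the limit law via the Laplace-transform computation and the continuity theorem — your working directly under $\bfP^{(n:1)}$ with $R_n^{(-1)}$, versus the paper's computation with $R_n/\Pi_n$ under $\bfP$, is the same calculation in view of $R_n/\Pi_n\eqdist R_n^{(-1)}$. Note only that your own identity $\bfE^{(n:1)}(Z_n\mid Z_n>0)=(1+R_n^{(-1)})^{1/\theta}$ (which agrees with the paper's proof) shows the exponent in \eqref{eq:surv mean critical} should read $-1/\theta$.
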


\begin{proof}
Assertion \eqref{eq:surv probab critical} follows because
\begin{align*}
\big(\Pi_{n}R_{n}^{(-1)}\big)^{1/\theta}\,\bfP^{(n:1)}(Z_{n}>0)\,&=\,\left(\frac{\Pi_{n}R_{n}^{(-1)}}{\Pi_{n}+\Pi_{n}R_{n}^{(-1)}}\right)^{1/\theta}\,=\,\left(\frac{R_{n}^{(-1)}}{1+R_{n}^{(-1)}}\right)^{1/\theta}
\end{align*}
and $R_{n}^{(-1)}\to\infty$ a.s. The latter also provides \eqref{eq:surv mean critical} when combining it with $\bfE^{(n:1)}(Z_{n}|Z_{n}>0)=(1+R_{n}^{(-1)})^{1/\theta}$. Finally, if $\vph_{n}$ denotes the random Laplace transform of $(\Pi_{n}/R_{n})^{1/\theta}Z_{n}$ given $Z_{n}>0$ under $\bfP$, then
\begin{align*}
\frac{1}{(1-\vph_{n}(u))^{\theta}}\ &=\ \frac{1}{(1-h_{n}(\exp(-u(\Pi_{n}/R_{n})^{1/\theta}))^{\theta}}\\
&=\ \frac{1}{1+R_{n}/\Pi_{n}}\,\frac{1}{(1-\exp(-u(\Pi_{n}/R_{n})^{1/\theta}))^{\theta}}\,+\,\frac{R_{n}/\Pi_{n}}{1+R_{n}/\Pi_{n}}\\
&\to\ \frac{1}{u^{\theta}}\,+\,1\quad\text{a.s.~as }n\to\infty,
\end{align*}
and this proves \eqref{eq:asymp law critical} (see also \eqref{eq:CPFplus LT}).\qed
\end{proof}

\begin{Rem}\rm
The counterparts of \eqref{eq:surv probab critical} and \eqref{eq:surv mean critical} under $\bfP^{(1:n)}$ or $\bfP$ do only assert convergence in probability, viz.
\begin{align*}
R_{n}^{1/\theta}\,\bfP(Z_{n}>0)\ \iprob\ 1\quad\text{and}\quad\bigg(\frac{\Pi_{n}}{R_{n}}\bigg)^{1/\theta}\bfE(Z_{n}|Z_{n}>0)\ \iprob\ 1
\end{align*}
\end{Rem}

\section{Decomposition of supercritical processes}

Every supercritical \GWP\ $\cZ=(Z_{n})_{n\ge 0}$ with one ancestor and extinction probability $0<q<1$ can be decomposed into two nontrivial parts $\cZ_{1}=(Z_{1,n})_{n\ge 0}$ and $\cZ_{2}=(Z_{2,n})_{n\ge 0}$ by dividing each generation $n\ge 1$ into their individuals with finite progeny and those having an infinite line of descent, respectively, see \cite[p.\,47ff]{Athreya+Ney:72}. When conditioning upon extinction of $\cZ$, the process $\cZ_{1}$ then forms a subcritical \GWP\ with offspring pgf~$g(s)=q^{-1}f(qs)$, whereas $\cZ_{2}$ constitutes a supercritical \GWP\ with offspring pgf
$$ h(s)\,=\,\frac{f(q+(1-q)s)-q}{1-q}, $$
when conditioning upon explosion of $\cZ$. With $P$ denoting the transition kernel of the Markov chain $\cZ$, the law of $\cZ_{1}$, known as the Harris-Sevastyanov transform, is actually nothing but the Doob $h$-transform of $\cZ$ under the positive $P$-harmonic function $\N_{0}\ni i\mapsto q^{i}$, see \cite[Thm.~3.1]{KleRosSag:07}. It is also stated there that, if $f$ is linear-fractional, then $g$ is linear-fractional, see \cite[Prop.~3.1]{KleRosSag:07}, and the same holds true for $h$ as pointed out in \cite[Sect.\,4]{Alsmeyer:21}. The extension of these findings to power-fractional \GWP's in fixed environment, which naturally includes the linear-fractional ones $(\theta=1)$, is provided by the next result which has the interesting feature that the offspring law of the subcritical part $\cZ_{1}$ is again power-fractional, but with $\gamma>1$.

\begin{Theorem}\label{thm:decomposition supercritical GWP}
Let $\cZ=(Z_{n})_{n\ge 0}$ be a supercritical \GWP\ with offspring law $\PF(\theta,a,b)$ for $a\in (0,1)$ and $b>1-a$, thus $0<q<1$ by \eqref{eq:ext probab fixed environment}. Then
the offspring laws of $\cZ_{1}$ and $\cZ_{2}$, the latter when conditioned under $Z_{n}\to\infty$, are
$$ \PF(\theta,q^{-1},a,bq^{\theta})\quad\text{and}\quad\PF_{+}(\theta,a), $$
respectively.
\end{Theorem}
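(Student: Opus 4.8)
The plan is to reduce everything to the characterizing functional equation \eqref{eq2:def PF gf} for power-fractional pgf's. First I would recall the classical decomposition of a supercritical \GWP\ (see \cite[p.\,47ff]{Athreya+Ney:72} and \cite[Thm.~3.1 and Prop.~3.1]{KleRosSag:07}): the finite-progeny part $\cZ_{1}$, conditioned on extinction of $\cZ$, is a \GWP\ with offspring pgf $g(s)=q^{-1}f(qs)$ (the Harris--Sevastyanov transform, i.e. the Doob $h$-transform under $i\mapsto q^{i}$), while the infinite-line-of-descent part $\cZ_{2}$, conditioned on $Z_{n}\to\infty$, is a \GWP\ with offspring pgf $h(s)=(1-q)^{-1}\big(f(q+(1-q)s)-q\big)$. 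So the only task is to identify these two pgf's, and for that I would use that $f$ satisfies $(1-f(s))^{-\theta}=a(1-s)^{-\theta}+b$ on $[0,1)$ together with the closed form $q=1-((1-a)/b)^{1/\theta}$ from \eqref{eq:ext probab fixed environment}, equivalently $(1-q)^{\theta}=(1-a)/b$.

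For $\cZ_{2}$, I would write $1-h(s)=(1-q)^{-1}\big(1-f(q+(1-q)s)\big)$ and feed the argument $q+(1-q)s\in[q,1)$ into the functional equation of $f$; using $1-q-(1-q)s=(1-q)(1-s)$ the powers of $1-q$ cancel except in the constant term, leaving $(1-h(s))^{-\theta}=a(1-s)^{-\theta}+b(1-q)^{\theta}=a(1-s)^{-\theta}+(1-a)$ after inserting $(1-q)^{\theta}=(1-a)/b$. By \eqref{eq2:def PF gf} this is exactly the functional equation of $\PF(\theta,1,a,1-a)=\PF_{+}(\theta,a)$. I would also note that $h(0)=(1-q)^{-1}(f(q)-q)=0$, so the law is supported by $\N$, consistent with the $\PF_{+}$ notation.

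For $\cZ_{1}$, I would write $q^{-1}-g(s)=q^{-1}\big(1-f(qs)\big)$, feed $qs\in[0,q)$ into the functional equation of $f$, and then rescale via $(1-qs)^{\theta}=q^{\theta}(q^{-1}-s)^{\theta}$ to obtain $(q^{-1}-g(s))^{-\theta}=a(q^{-1}-s)^{-\theta}+bq^{\theta}$. Again by \eqref{eq2:def PF gf}, this is precisely the functional equation of the power-fractional law with leading exponent $\theta$, shift parameter $\gamma=q^{-1}>1$, and offspring parameters $a'=a\in(0,1)$ and $b'=bq^{\theta}$, i.e. of $\PF(\theta,q^{-1},a,bq^{\theta})$.

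The one step requiring genuine care is checking that the tuple $(\theta,q^{-1},a,bq^{\theta})$ is \emph{admissible}, namely that it falls under Case (A2.1) of Table~\ref{tab:1.2}, so that $g$ is in particular a proper (non-defective) pgf with $g(1)=1$. Here I would observe that the minimal fixed point of $g$ in $[0,1]$ is $q'=1$, because $g(s)=s$ is equivalent to $f(qs)=qs$ and the only fixed point of $f$ of the form $qs$ with $s\le1$ is $qs=q$; then, using the restatement of the (A2)-constraint given after Table~\ref{tab:1.1} together with $(1-q)^{\theta}=(1-a)/b$, one checks $\dfrac{1-a'}{(\gamma-q')^{\theta}}=\dfrac{1-a}{(q^{-1}-1)^{\theta}}=(1-a)\big(\tfrac{q}{1-q}\big)^{\theta}=bq^{\theta}=b'$, which is exactly the Case (A2.1) relation $b'=(1-a')(\gamma-1)^{-\theta}$. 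This confirms $g(1)=1$ and completes the proof. I expect this bookkeeping --- tracking how $\gamma$ moves from $1$ to $q^{-1}$ while the tuple stays inside the admissible region --- to be the only real (if mild) obstacle; everything else is the one-line substitution described above.
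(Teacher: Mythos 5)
Your proposal is correct and follows essentially the same route as the paper: plug the pgf's $g(s)=q^{-1}f(qs)$ and $h(s)=(1-q)^{-1}(f(q+(1-q)s)-q)$ into the functional equation \eqref{eq2:def PF gf} and use $(1-q)^{\theta}=(1-a)/b$, yielding exactly the two displayed identities the paper computes. Your additional verification that $(\theta,q^{-1},a,bq^{\theta})$ satisfies the (A2.1) constraint (so $g(1)=1$) is a sensible extra check that the paper leaves implicit.
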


The extra notation for the power-fractional laws $\PF_{+}(\theta,a)$ on the \emph{positive} integers has been given here because they form the extension of their linear-fractional counterpart, i.e., the geometric laws $\Geom_{+}(a)$.

\begin{proof}
Put $\gamma=q^{-1}$. Using the definitions of $g$ and $h$ in terms of $f$ from above, the assertions follow from
\begin{align*}
\frac{1}{(\gamma-g(s))^{\theta}}\ &=\ \frac{q^{\theta}}{(1-f(qs))^{\theta}}\ =\ \frac{aq^{\theta}}{(1-qs)^{\theta}}\,+\,bq^{\theta}\ =\ \frac{a}{(\gamma-s)^{\theta}}\,+\,bq^{\theta}
\shortintertext{and}
\frac{1}{(1-h(s))^{\theta}}\ &=\ \frac{(1-q)^{\theta}}{(1-f(q+(1-q)s))^{\theta}}\ =\ \frac{a}{(1-s)^{\theta}}\,+\,b(1-q)^{\theta}\\
&=\ \frac{a}{(1-s)^{\theta}}\,+\,1-a,
\end{align*}
where $(1-q)^{\theta}=b^{-1}(1-a)$ has been utilized for the last equality.\qed
\end{proof}

In the random environment case, the situation becomes more complex. First, the extinction probabilities $q(\bfe_{\ssy\geq n}),\,n\ge 1,$ figuring in the pgf's of the quenched offspring laws of both $\cZ_{1}$ and $\cZ_{2}$ now depend on the entire future and vary with shifts in the environment. As a result, the environment of these processes changes from $\bfe$ to $(\bfe_{\ssy\geq n})_{n\ge 1}$ and is therefore no longer iid but only stationary and ergodic. This was previously noted in \cite{Alsmeyer:21} for the linear-fractional case. As a second complication, the quenched offspring laws of the subcritical part $\cZ_{1}$ are generally no longer of power-fractional type as described in (A1) or (A2). Instead, they fall under an extended interpretation, as will be discussed in Remark \ref{rem:ext PF}.

\begin{Theorem}\label{thm:decomposition supercritcal GWPRE}
Let $\cZ=(Z_{n})_{n\ge 0}$ be supercritical with $0<q(\bfe)<1$ a.s. Put also $q_{n}=q(\bfe_{\ssy\geq n})$ and $R_{n,\infty}=\Pi_{n}^{-1}(R_{\infty}-R_{n})$ for $n\ge 1$. Then the following assertions hold for $\cZ_{1}$ and $\cZ_{2}$ as introduced before:
\begin{itemize}\itemsep3pt
\item[(a)] Conditioned upon extinction of $\cZ$, i.e., under $\ovl\Prob:=\Prob(\cdot|Z_{n}\to 0)$, the process $\cZ_{1}$ is a subcritical \GWP\ in the stationary ergodic environment $(\bfe_{\ssy\geq n})_{n\ge 1}$ and with quenched offspring pgf's
\begin{gather}
g_{n}(s)\ =\ \frac{f_{n}(q_{n+1}s)}{q_{n}}\ =\ \frac{R_{n,\infty}^{1/\theta}}{R_{n,\infty}^{1/\theta}-1}f_{n}\left(\frac{(R_{n+1,\infty}^{1/\theta}-1)s}{R_{n+1,\infty}^{1/\theta}}\right)
\label{eq:pgf of Z_1}
\end{gather}
for $n\ge 1$.
\item[(b)] Conditioned upon survival of $\cZ$, i.e., under $\wh\Prob:=\Prob(\cdot|Z_{n}\to\infty)$, the process $\cZ_{2}$ is a nonextinctive power-fractional \GWP\ in the stationary ergodic environment $(\bfe_{\ssy\geq n})_{n\ge 1}$ and with quenched offspring laws
\begin{gather}
\PF_{+}\left(\theta,\frac{A_{n}R_{n+1,\infty}}{R_{n,\infty}}\right)\label{eq:laws of Z_2}
\shortintertext{and associated pgf's}
\begin{split}
h_{n}(s)\ &=\ \frac{f_{n}(q_{n+1}+(1-q_{n+1})s)-q_{n}}{1-q_{n}}\\
&=\ 1\,-\,\left[\frac{A_{n}R_{n+1,\infty}}{R_{n,\infty}(1-s)^{\theta}}+\frac{B_{n}}{R_{n,\infty}}\right]^{-1/\theta}
\end{split}
\label{eq:pgf of Z_2}
\end{gather}
for $n\ge 1$.
\end{itemize}
\end{Theorem}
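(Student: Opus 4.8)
The plan is to recover the two factor processes from the classical Harris--Sevastyanov/Athreya--Ney decomposition, carried out conditionally on the environment $\bfe$, and then to make the resulting quenched offspring pgf's explicit by means of the power-fractional functional equation $(1-f_n(s))^{-\theta}=A_n(1-s)^{-\theta}+B_n$ together with the extinction formula \eqref{eq:ext probab varying environment} applied along shifts of $\bfe$.

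First I would fix a realization of $\bfe$ and use that, conditionally on $\bfe$, the subtrees rooted at the individuals of generation $n$ are independent copies of a power-fractional \GWPRE\ in the shifted environment $\bfe_{\ssy\geq n+1}$, so that each such individual has an infinite line of descent with quenched probability $1-q_{n+1}$, where $q_{n+1}=q(\bfe_{\ssy\geq n+1})$. Splitting every generation into its finite- and infinite-progeny individuals therefore produces, conditionally on $\bfe$, two inhomogeneous Galton--Watson processes whose generation-$n$ offspring pgf's follow from the standard construction (cf.\ \cite[p.\,47ff]{Athreya+Ney:72}, \cite[Thm.\,3.1]{KleRosSag:07}): the Doob $h$-transform of the quenched Markov chain $(Z_n)$ under the harmonic function $i\mapsto q_{n+1}^{i}$ (which is space--time harmonic by the Athreya--Karlin identity $q_n=f_n(q_{n+1})$, cf.\ \eqref{eq2:functional eq q(bfe)}) yields that, under $\ovl\Prob$, $\cZ_1$ has generation-$n$ offspring pgf $g_n(s)=q_n^{-1}f_n(q_{n+1}s)$, while the analogous computation for the individuals with infinite line of descent gives that, under $\wh\Prob$, $\cZ_2$ has generation-$n$ offspring pgf $h_n(s)=(1-q_n)^{-1}\big(f_n(q_{n+1}+(1-q_{n+1})s)-q_n\big)$. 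The same identity $q_n=f_n(q_{n+1})$ shows $g_n(1)=h_n(1)=1$, and $h_n(0)=0$ confirms that $\cZ_2$ is nonextinctive. Since both $g_n$ and $h_n$ are measurable functions of the whole tail $\bfe_{\ssy\geq n}$ (through $f_n$ and through $q_n,q_{n+1}$), the driving environment of $\cZ_1$ and $\cZ_2$ is $(\bfe_{\ssy\geq n})_{n\ge 1}$, which is the orbit of $\bfe$ under the shift and hence stationary and ergodic; that $\cZ_1$ is in fact subcritical follows exactly as in the fixed-environment case (Theorem \ref{thm:decomposition supercritical GWP}) and as in \cite[Sect.\,4]{Alsmeyer:21}.

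It then remains to identify the laws explicitly. Applying \eqref{eq:ext probab varying environment} to the shifted environments gives $(1-q_n)^{-\theta}=R_{n,\infty}$ and $(1-q_{n+1})^{-\theta}=R_{n+1,\infty}$, where $R_{n,\infty}$ is the perpetuity attached to $\bfe_{\ssy\geq n}$, and the one-step perpetuity recursion $R_{n,\infty}=B_n+A_nR_{n+1,\infty}$ holds because $\bfe_{\ssy\geq n}=(\bfe_n,\bfe_{\ssy\geq n+1})$. For (b) I would substitute $u=q_{n+1}+(1-q_{n+1})s$ into the functional equation, use $1-u=(1-q_{n+1})(1-s)$, and obtain
\[
\frac{1}{(1-h_n(s))^{\theta}}\ =\ (1-q_n)^{\theta}\left[\frac{A_n}{(1-q_{n+1})^{\theta}(1-s)^{\theta}}+B_n\right]\ =\ \frac{A_nR_{n+1,\infty}}{R_{n,\infty}(1-s)^{\theta}}+\frac{B_n}{R_{n,\infty}},
\]
which is the second line of \eqref{eq:pgf of Z_2}; since the two coefficients sum to $\big(A_nR_{n+1,\infty}+B_n\big)/R_{n,\infty}=1$ by the recursion, this identifies the offspring law as $\PF_{+}\big(\theta,A_nR_{n+1,\infty}/R_{n,\infty}\big)$, i.e.\ \eqref{eq:laws of Z_2}. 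For (a), substituting $q_n=1-R_{n,\infty}^{-1/\theta}$ and $q_{n+1}=1-R_{n+1,\infty}^{-1/\theta}$ into $g_n(s)=q_n^{-1}f_n(q_{n+1}s)$ and rearranging produces the second expression for $g_n$ in \eqref{eq:pgf of Z_1}.

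The only genuine point requiring care is the correct bookkeeping of the shifted environments: the generation-$n$ offspring pgf is governed by the extinction probability $q_{n+1}$ of the environment started one step later, so the perpetuity entering $g_n$ and $h_n$ is $R_{n+1,\infty}$ and not $R_{n,\infty}$, and it is precisely the affine recursion $R_{n,\infty}=B_n+A_nR_{n+1,\infty}$ that forces the two parameters of the offspring law of $\cZ_2$ to sum to $1$ --- thereby making it of type $\PF_{+}$ rather than a defective or a proper power-fractional law on $\N_0$. Everything else is the same algebra of the power-fractional functional equation already used in the constant-environment proof of Theorem \ref{thm:decomposition supercritical GWP}.
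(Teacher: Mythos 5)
Your identification of the quenched offspring pgf's follows the paper's route essentially verbatim: the classical Harris--Sevastyanov splitting carried out conditionally on $\bfe$, the Athreya--Karlin identity $q_{n}=f_{n}(q_{n+1})$, and then the power-fractional functional equation combined with $(1-q_{n})^{-\theta}=R_{n,\infty}$, $(1-q_{n+1})^{-\theta}=R_{n+1,\infty}$ and the one-step perpetuity recursion, which makes the two coefficients in \eqref{eq:pgf of Z_2} sum to $1$ and so yields \eqref{eq:laws of Z_2}; this is exactly the algebra in the paper's proof (and you even adopt the same reading of $R_{n,\infty}$ as the perpetuity attached to $\bfe_{\ssy\geq n}$ that the paper's proof uses when it writes $q_{n}=(R_{n,\infty}^{1/\theta}-1)/R_{n,\infty}^{1/\theta}$).

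There is, however, a genuine gap in the environmental part. You justify the stationary-ergodic claim by saying that $(\bfe_{\ssy\geq n})_{n\ge 1}$ is the orbit of $\bfe$ under the shift ``and hence stationary and ergodic''. That is a statement about the law of $\bfe$ under $\Prob$, whereas the theorem asserts these properties under the conditioned measures $\ovl\Prob$ and $\wh\Prob$. Conditioning on $\{Z_{n}\to 0\}$ (resp.\ $\{Z_{n}\to\infty\}$) reweights the law of $\bfe$ on $\sigma(\bfe)$ by the density $q(\bfe)/\Prob(Z_{n}\to 0)$ (resp.\ $(1-q(\bfe))/\Prob(Z_{n}\to\infty)$), and this density is not shift-invariant, so neither stationarity nor ergodicity of the shifted environment under $\ovl\Prob$ and $\wh\Prob$ follows from the shift-orbit structure alone; it is an assertion that has to be proved. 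This is precisely what the first part of the paper's proof does: it establishes stationarity under $\ovl\Prob$ (and analogously under $\wh\Prob$) by an explicit computation with the density $q(\bfe)$, and deduces ergodicity from the equivalence of the laws of $\bfe$ under $\Prob$, $\ovl\Prob$ and $\wh\Prob$, which is exactly where the hypothesis $0<q(\bfe)<1$ a.s.\ is needed. Your proposal never invokes that hypothesis for this purpose and leaves the conditioned-measure claim unaddressed; apart from this point, the argument coincides with the paper's.
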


\begin{proof}
We first note that $\bfe$, and thus also $(\bfe_{\ssy\geq n})_{n\ge 1}$ (see \cite[Prop.\,6.31]{Breiman:68}), remains stationary ergodic under both $\ovl{\Prob}$ and $\wh\Prob$. Namely,
\begin{align*}
&\ovl\Prob\big(\bfe_{i}\in E_{i},\,1\le i\le n\big)\ 
=\ \frac{1}{\Prob(Z_{n}\to 0)}\,\Erw\left[q(\bfe)\prod_{i=1}^{n}\1_{E_{i}}(\bfe_{i})\right]\\
&\hspace{1cm}=\ \frac{1}{\Prob(Z_{n}\to 0)}\,\Erw\left[q_{i}\prod_{i=1}^{n}\1_{E_{i}}(\bfe_{i+k})\right]\ =\ \ovl\Prob\big(\bfe_{i+k}\in E_{i},\,1\le i\le n\big)
\end{align*}
for any choice of measurable $E_{1},\ldots,E_{n}$ and all $n,k\in\N$ shows stationarity of $\bfe$ under $\ovl{\Prob}$, and the argument for $\wh\Prob$ is similar. The ergodicity follows because $0<q(\bfe)<1$ a.s.~ensures that the laws of $\bfe$ under $\Prob,\,\ovl{\Prob}$ and $\wh\Prob$ are equivalent (sharing the same null sets).

\vspace{.1cm}
The arguments showing that $g_{n}$ and $h_{n}$ are the pgf's of $Z_{1,n}$ and $Z_{2,n}$ under the quenched measures $\ovl\bfP:=\Prob(\cdot|Z_{n}\to 0,\bfe)$ and $\wh\bfP:=\Prob(\cdot|Z_{n}\to \infty,\bfe)$, respectively, are the same as for an ordinary Galton-Watson process, when additionally using that $q_{n}=f_{n}(q_{n+1})$ a.s.~for each $n\ge 1$. We also note that $q_{n}=(R_{n,\infty}^{1/\theta}-1)/R_{n,\infty}^{1/\theta}$ by \eqref{eq:ext probab varying environment} and the fact that $R_{n,\infty}$ is the copy of $R_{\infty}$ for the shifted environment $\bfe_{\ssy\geq n}$. The second equality in \eqref{eq:pgf of Z_1} is now immediate.

\vspace{.1cm}
Left with a proof of (b), it suffices to point out that
\begin{align*}
\frac{1}{(1-h_{n}(s))^{\theta}}\ &=\ \frac{(1-q_{n})^{\theta}}{(1-f_{n}(q_{n+1}+(1-q_{n+1})s))^{\theta}}\\
&=\ \frac{A_{n}(1-q_{n})^{\theta}}{(1-q_{n+1})^{\theta}(1-s)^{\theta}}\,+\,B_{n}(1-q_{n})^{\theta}\\
&=\ \frac{A_{n}R_{n+1,\infty}}{R_{n,\infty}(1-s)^{\theta}}\,+\,\frac{B_{n}}{R_{n,\infty}}
\end{align*}
for any $n\in\N$.\qed
\end{proof}

\begin{Rem}\label{rem:ext PF}\rm
One can easily check that \eqref{eq:pgf of Z_1} implies
\begin{equation*}
\frac{1}{q_{n}^{-1}-g_{n}(s)}\ =\ \frac{A_{n}(q_{n}/q_{n+1})^{\theta}}{(q_{n+1}^{-1}-s)^{\theta}}\,+\,B_{n}q_{n}^{\theta}
\end{equation*}
which is an equation of type \eqref{eq2:def PF gf}, but with different $\gamma$'s figuring on the left-hand and the sright-hand side, namely $q_{n}^{-1}$ and $q_{n+1}^{-1}$, respectively. On the other hand, writing $g_{n}$ in the form
\begin{gather*}
g_{n}(s)\,=\,\frac{1}{q_{n}}-\left[\frac{A_{n}(q_{n}/q_{n+1})^{\theta}}{(q_{n+1}^{-1}-s)^{\theta}}\,+\,B_{n}q_{n}^{\theta}\right]^{-1/\theta}
\shortintertext{we see that it differs only by a constant from}
\wh{g}_{n}(s)\,=\,\frac{1}{q_{n+1}}-\left[\frac{A_{n}(q_{n}/q_{n+1})^{\theta}}{(q_{n+1}^{-1}-s)^{\theta}}\,+\,B_{n}q_{n}^{\theta}\right]^{-1/\theta}\quad\text{for }s\in [0,1].
\end{gather*}
The latter function equals the pgf~of $\PF(\theta,q_{n+1}^{-1},A_{n}(q_{n}/q_{n+1})^{\theta},B_{n}q_{n}^{\theta})$, assuming Case (A2) is extended to allow  total mass greater than 1. This extension effectively removes the constraint $\frac{b}{1-a}\le (\gamma-1)^{-\theta}$, as detailed in Table \ref{tab:1.1}. In other words, for $\theta\in (0,1],\,\gamma>1$ and $a\in (0,1)$, the power-fractional measure $\PF(\theta,\gamma,a,b)$ is now defined by having pgf
\begin{equation*}
f(s)\ =\ \gamma\,-\,\left[\frac{a}{(\gamma-s)^{\theta}}\,+\,b\right]^{-1/\theta}
\end{equation*}
for any $b\ge\gamma^{-\theta}(1-a)$. Under this extension, we conclude in Theorem \ref{thm:decomposition supercritcal GWPRE} that the quenched law of $Z_{1,n}$ given extinction of $\cZ$ equals
\begin{gather}\label{eq:quenched law of Z_{1,n}}
\left(\frac{1}{q_{n}}-\frac{1}{q_{n+1}}\right)\delta_{0}\,+\,\PF\left(\theta,\frac{1}{q_{n+1}},\frac{A_{n}q_{n}^{\theta}}{q_{n+1}^{\theta}},B_{n}q_{n}^{\theta}\right)\quad\text{a.s.}
\end{gather}
for each $n\in\N$, where the additional mass at 0 can be positive or negative, and also vanish.
\end{Rem}


\section{Power-fractional laws in continuous-time branching}

In this section, we briefly discuss the question of whether power-fractional branching processes in continuous time can be defined, but limit ourselves to the class of Markov branching processes. This has already been done in greater detail in \cite{LindoSagitZhum:23}, including the case of varying environment.

\vspace{.1cm}
The \emph{linear birth-death process} $(Y(t))_{t\ge 0}$ is not only a simple example of a Markov branching process, but also the one whose one-dimensional marginals are all linear-fractional. Specifically, assume that individuals, acting independently, give birth to a new individual at rate $\lambda>0$ and die at rate $\mu\ge 0$. This means that each individual has an exponential lifetime with parameter $\lambda+\mu$, and at the end of its life either gives birth to two children, which occurs with probability $\lambda/(\lambda+\mu)$, or dies with probability $\mu/(\lambda+\mu)$. Thus, the process has offspring distribution $\frac{\mu}{\lambda+\mu}\delta_{0}+\frac{\lambda}{\lambda+\mu}\delta_{2}$, with associated pgf
\begin{equation*}
g(s)\,:=\,\frac{\mu}{\lambda+\mu}\,+\,\frac{\lambda}{\lambda+\mu}s^{2},\quad s\in [0,1].
\end{equation*}
Put $\rho:=\lambda-\mu$, 
\begin{gather*}
\alpha(t)\ =\ \begin{cases}
\displaystyle\frac{\mu e^{\rho t}-\mu}{\lambda e^{\rho t}-\mu}&\text{if }\lambda\ne\mu\\[3mm]
\displaystyle\frac{\lambda t}{\lambda t+1}&\text{if }\lambda=\mu
\end{cases},\quad\text{and}\quad\beta(t)\ =\ \frac{\lambda}{\mu}\alpha(t)
\end{gather*}
for $t\ge 0$. Let 
$$ G(s,t)\,:=\,\Erw(s^{Y(t)}|Y(0)=1)\quad\text{for }(s,t)\in [0,1]\times [0,\infty) $$ 
denote the pgf of $Y(t)$ given $Y(0)=1$. Then the branching property implies
\begin{gather}\label{eq:branching property G}
G(s,t+u)\ =\ G(G(s,t),u)\quad\text{for all }(s,t,u)\in [0,1]\times [0,\infty).
\end{gather}
Moreover, with the boundary condition $G(s,0)=s$, the bivariate function $G(s,t)$ forms the unique solution to the forward and backward equation
\begin{gather*}
\frac{\partial}{\partial t}G(s,t)\ =\ (\lambda+\mu)(g(s)-s)\,\frac{\partial}{\partial s}G(s,t)
\shortintertext{and}
\frac{\partial}{\partial t}G(s,t)\ =\ g(G(s,t))-G(s,t)
\end{gather*}
respectively, in particular
\begin{equation}\label{eq:def pgf linear-fractional MBP}
(\lambda+\mu)(g(s)-s)\,=\,\frac{\partial}{\partial t}G(s,0+)\,:=\,\lim_{t\downarrow 0}\frac{G(s,t)-s}{t},\quad s\in[0,1].
\end{equation}
These equations can be solved explicitly, yielding
\begin{gather}\label{eq:pgf LDBP}
G(s,t)\ =\ \alpha(t)\ +\ (1-\alpha(t))\frac{(1-\beta(t))s}{1-\beta(t)s},
\intertext{where, setting $\rho:=\lambda-\mu$,}
\alpha(t)\ =\ \begin{cases}
\displaystyle\frac{\mu e^{\rho t}-\mu}{\lambda e^{\rho t}-\mu}&\text{if }\lambda\ne\mu\\[3mm]
\displaystyle\frac{\lambda t}{\lambda t+1}&\text{if }\lambda=\mu
\end{cases},\quad\text{and}\quad\beta(t)\ =\ \begin{cases}
\displaystyle\frac{\lambda e^{\rho t}-\lambda}{\lambda e^{\rho t}-\mu}&\text{if }\lambda\ne\mu\\[3mm]
\displaystyle\frac{\lambda t}{\lambda t+1}&\text{if }\lambda=\mu
\end{cases},\nonumber
\end{gather}
see e.g.~\cite{Tavare:18}. The uniqueness of the solution ensures that the linear birth-death process $(Y(t))_{t\ge 0}$ is the unique Markov branching process with pgf's $G(\cdot,t)$ for $t\ge 0$, see \cite[Thm.\,III.2.1]{Athreya+Ney:72}. From \eqref{eq:pgf LDBP}, it follows that all $Y(t)$ are linear-fractional, specifically
\begin{gather}
\begin{split}
Y(t)\ &\eqdist\ \alpha(t)\delta_{0}\,+\,(1-\alpha(t))\,\Geom_{+}(1-\beta(t))\\
&=\ \begin{cases}
\LF\bigg(e^{-\rho t},\frac{\lambda}{\rho}(1- e^{-\rho t})\bigg)&\text{if }\lambda\ne\mu,\\
\LF(1,\lambda t)&\text{if }\lambda=\mu.
\end{cases}
\end{split}
\end{gather}
In the pure birth case $\mu=0<\lambda$, the well-known \emph{Yule process} is obtained, where
\begin{equation*}
Y(t)\ \eqdist\ \LF(e^{-\lambda t},1-e^{-\lambda t})\ =\ \Geom_{+}(e^{-\lambda t}),\quad t\ge 0.
\end{equation*}

We will now demonstrate that Markov branching processes with power-frac\-tional marginals $\PF(\theta,a(t),b(t))$ for some $\theta$ less than $1$ -- specifically, their pgf's -- can be derived from $G(s,t)$ (which is representing the case $\theta=1$) by utilizing the connection between linear-fractional and power-fractional laws through conjugation with a Sibuya law, as described in (f) of the Introduction (see \eqref{eq:conjugation rule with Sibuya}). 

\vspace{.1cm}
To this end, let $h$ be the pgf of the Sibuya distribution $\Sib(\theta)$ for some fixed $\theta\in (0,1)$, and let $(Y(t))_{t\ge 0}$ be  the linear birth-death process defined above with individual birth and death rates $\mu$ and $\lambda$, respectively, and with pgf's $G(\cdot,t)$ given by \eqref{eq:pgf LDBP}. For each $t\ge 0$, define the pgf $F(\cdot,t)$ as the conjugation of $G(\cdot,t)$ with $h$, that is
\begin{equation}\label{eq:F conjugation}
F(s,t)\,;=\,h^{-1}(G(h(s),t)\quad\text{for }(s,t)\in [0,1]\times [0,\infty)
\end{equation}
where $h^{-1}$ denotes the inverse of $h$ on $[0,1]$ and can easily be computed as
$$ h^{-1}(s)\ =\ 1-(1-s)^{1/\theta},\quad s\in [0,1]. $$
It is straightforward to verify that \eqref{eq:branching property G} carries over to $F(s,t)$, thus
\begin{gather*}
F(s,t+u)\ =\ F(F(s,t),u)\quad\text{for all }(s,t,u)\in [0,1]\times [0,\infty).
\end{gather*}
Furthermore, with the initial condition $F(s,0)=s$ and for some $\nu>0$ and some pgf $f$, the function $F(s,t)$ must be the unique solution to the forward and backward equation
\begin{gather*}
\frac{\partial}{\partial t}F(s,t)\ =\ \nu(f(s)-s)\,\frac{\partial}{\partial s}F(s,t)
\shortintertext{and}
\frac{\partial}{\partial t}F(s,t)\ =\ f(F(s,t))-F(s,t)
\end{gather*}
respectively. The function $f$ is derived as follows using \eqref{eq:def pgf linear-fractional MBP}, the relation \eqref{eq:F conjugation}, and the constraint that $f'(0)$, which represents the probability for an individual to have exactly one child, must vanish:
\begin{gather*}
\nu(f(s)-s)\ =\ \frac{\partial}{\partial t}F(s,0+)\ =\ (h^{-1})'(G(h(s),0))\frac{\partial}{\partial t}G(h(s),0+)
\shortintertext{which simplifies to}
f(s)-s\ =\ \frac{(\lambda+\mu)(h^{-1})'(h(s))(g(h(s))-h(s))}{\nu}\ =\ \frac{(\lambda+\mu)(g(h(s))-h(s))}{\nu h'(s)}.
\intertext{With explicit expressions for $g$ and $h$, this yields}
f(s)\ =\ 1\,-\,\frac{\theta\nu+\rho}{\theta\nu}(1-s)\,+\,\frac{\lambda}{\theta\nu}(1-s)^{1+\theta}.
\end{gather*}
Finally, $f'(0)=(\theta\nu)^{-1}(\theta\nu+\rho-\lambda(1+\theta))=0$ provides
\begin{equation}\label{eq:parameter constraint}
\nu\ =\ \lambda+\mu\theta^{-1},
\end{equation}
in particular $\nu\ge\lambda$. Thus, with $\sfm:=(\theta\nu+\rho)/\theta\nu=\lambda(1+\theta)/\theta\nu$, we arrive at
\begin{equation}\label{eq:final form of f MBP}
f(s)\ =\ 1\,-\,\sfm(1-s)\,+\,\frac{\sfm}{1+\theta}(1-s)^{1+\theta},
\end{equation}
which is the same form of $f$ as stated in \cite[Eq.\,(3)]{LindoSagitZhum:23}. We note that $\sfm=f'(1)$, so it represents the mean of the distribution associated with $f$.

\vspace{.1cm}
After these considerations, we can state the following result:

\begin{Theorem}\label{thm:power-fractional MBP}
Given any $\theta\in (0,1)$ and $\nu>0$, let $\lambda>0$ and $\mu\ge 0$ be such that \eqref{eq:parameter constraint} holds. Let $(Y(t))_{t\ge 0}$ be a linear birth-death process with individual birth rate~$\lambda$, death rate $\mu$ and pgf's $G(\cdot,t)$, $t\ge 0$. Then there exists a Markov~branching process $(Z(t))_{t\ge 0}$ with pgf's $F(\cdot,t)$ for $t\ge 0$, which has the following properties: 
\begin{itemize}\itemsep2pt
\item[(i)] Individuals in the population described by the process have exponential lifetimes with parameter $\nu$ and offspring law with pgf $f$ defined in \eqref{eq:final form of f MBP}.
\item[(ii)] $F(s,t)$ is the conjugation of $G(s,t)$ stated in \eqref{eq:F conjugation} with the pgf of the Sibuya law $\Sib(\theta)$.
\item[(iii)] For each $t>0$, the law of $Z(t)$ is power-fractional, specifically
\begin{equation}\label{eq:law of power-fractional MBP Z(t)}
Z(t)\ \eqdist\ \PF\big(\theta,e^{-\rho t},\lambda\rho^{-1}(1-e^{-\rho t}\big)
\end{equation}
\end{itemize}
\end{Theorem}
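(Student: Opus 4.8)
The plan is to extract all three properties from the single observation that, by its definition \eqref{eq:F conjugation}, $F(\cdot,t)$ is the conjugation of a \emph{linear}-fractional pgf by the Sibuya pgf~$h$, hence a \emph{power}-fractional pgf through the relation \eqref{eq:conjugation rule with Sibuya}. To make this precise, first recall from the discussion around \eqref{eq:pgf LDBP} that $Y(t)$ has law $\LF(a(t),b(t))$ with $a(t)=e^{-\rho t}$ and $b(t)=\lambda\rho^{-1}(1-e^{-\rho t})$ when $\rho=\lambda-\mu\ne0$ (and $\LF(1,\lambda t)$ when $\rho=0$), so that $G(\cdot,t)$ is its pgf. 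For $t>0$ one has $a(t),b(t)>0$ and $a(t)+b(t)-1=(1-e^{-\rho t})\mu\rho^{-1}\ge0$, regardless of the sign of $\rho$; hence \eqref{eq:conjugation rule with Sibuya} applies with $g=G(\cdot,t)$ and gives $h\circ f_{t}=G(\cdot,t)\circ h$, where $f_{t}$ denotes the pgf of $\PF(\theta,a(t),b(t))$. Equivalently $F(\cdot,t)=h^{-1}\circ G(\cdot,t)\circ h=f_{t}$, and this one identity simultaneously yields (ii), shows that each $F(\cdot,t)$ is the pgf of a proper distribution on $\N_{0}$, and establishes the marginal law \eqref{eq:law of power-fractional MBP Z(t)} asserted in (iii); the case $\rho=0$ follows in the same manner, or by letting $\rho\to0$.

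Second, I would check that $(F(\cdot,t))_{t\ge0}$ is a continuous one-parameter semigroup of probability generating functions. The branching property transfers from \eqref{eq:branching property G} by conjugation: since $F(s,t)\in[0,1]$ and $h^{-1}$ is a genuine inverse of $h$ on $[0,1]$, one has $h(F(s,t))=G(h(s),t)$, and therefore $F(F(s,t),u)=h^{-1}(G(G(h(s),t),u))=h^{-1}(G(h(s),t+u))=F(s,t+u)$. Together with $F(s,0)=h^{-1}(h(s))=s$ and the joint continuity of $F$ inherited from that of $G$, $h$ and $h^{-1}$, this gives the desired semigroup.

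Third, to exhibit the process itself and obtain property (i), I would appeal to the standard construction of Markov branching processes from their transition semigroups (see e.g.~\cite[Sect.\,III.2]{Athreya+Ney:72}): a conservative continuous semigroup of pgf's is the transition semigroup of a Markov branching process, unique in law, whose infinitesimal structure is encoded by $\partial_{t}F(s,0+)$; if this limit equals $\nu(f(s)-s)$ with $\nu>0$ a constant and $f$ a pgf, then individuals in the population have independent exponential lifetimes with parameter~$\nu$ and offspring pgf~$f$. The semigroup here is conservative because $F(1,t)=1$ for all $t$ (each $\PF(\theta,a,b)$ with $a+b\ge1$ being proper). The required derivative is exactly the one already computed in the paragraph preceding the theorem: the chain rule applied to \eqref{eq:F conjugation}, together with \eqref{eq:def pgf linear-fractional MBP} and the explicit forms of $g$ and $h$, gives $\partial_{t}F(s,0+)=\nu(f(s)-s)$ with $f$ as in \eqref{eq:final form of f MBP}, once $\nu$ is pinned down by the requirement $f'(0)=0$ to the value~\eqref{eq:parameter constraint}. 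This delivers (i) and finishes the proof.

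The only step that calls for a short genuine computation, and the one I would be most careful about, is the verification that the function $f$ of \eqref{eq:final form of f MBP} is indeed a pgf, so that the appeal to the Markov branching construction in the previous paragraph is legitimate. Since $\nu=\lambda+\mu\theta^{-1}\ge\lambda$, we get $\sfm=\lambda(1+\theta)/(\theta\nu)\le(1+\theta)/\theta$, whence the constant coefficient $1-\sfm\theta/(1+\theta)$ of $f$ is nonnegative; the coefficient of $s$ vanishes by design; and for $k\ge2$ the coefficient of $s^{k}$ equals $\tfrac{\sfm}{1+\theta}\binom{1+\theta}{k}(-1)^{k}$, which is strictly positive because for $\theta\in(0,1)$ the binomial coefficient $\binom{1+\theta}{k}$ has sign $(-1)^{k}$. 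Thus $f$ is a pgf (with total mass $f(1)=1$), and the argument goes through.
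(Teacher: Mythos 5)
Your argument is correct, and for part (iii) it takes a genuinely different (and more self-contained) route than the paper. The paper disposes of (i) and (ii) by referring to the derivation of $f$ and $\nu$ preceding the theorem, exactly as you do via $\partial_{t}F(s,0+)=\nu(f(s)-s)$ and the standard Athreya--Ney construction; but for (iii) it quotes the explicit closed formula for $F(s,t)$ from \cite{LindoSagitZhum:23} and then matches parameters using $\sfm\nu\theta=\lambda(1+\theta)$ and $\sfm-1=\rho/(\theta\nu)$. You instead observe that $Y(t)\eqdist\LF(e^{-\rho t},\lambda\rho^{-1}(1-e^{-\rho t}))$, check $a(t),b(t)>0$ and $a(t)+b(t)-1=(1-e^{-\rho t})\mu\rho^{-1}\ge 0$, and then read off from the conjugation rule \eqref{eq:conjugation rule with Sibuya} that $F(\cdot,t)=h^{-1}\circ G(h(\cdot),t)$ is precisely the pgf of $\PF(\theta,e^{-\rho t},\lambda\rho^{-1}(1-e^{-\rho t}))$; this yields (ii), properness of the marginals, and \eqref{eq:law of power-fractional MBP Z(t)} in one stroke, with the $\rho=0$ case handled as $\LF(1,\lambda t)\mapsto\PF(\theta,1,\lambda t)$. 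The trade-off: the paper's citation gives the whole formula for $F(s,t)$ ready-made, while your derivation uses only facts established inside the paper (the conjugation identity of item (f) of the Introduction and the linear-fractional marginals of the birth--death process), which is arguably closer in spirit to how the theorem is motivated. Your additional verification that $f$ of \eqref{eq:final form of f MBP} is a genuine pgf (nonnegative constant coefficient since $\nu\ge\lambda$ forces $\sfm\le(1+\theta)/\theta$, vanishing linear coefficient, and positivity of $(-1)^{k}\binom{1+\theta}{k}$ for $k\ge 2$) fills in a point the paper leaves implicit until the final remark on the $\GSib$ tail measure, and it legitimizes the appeal to the Markov branching construction; the semigroup and conservativeness checks you give are routine and correct.
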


\begin{proof}
For (i) and (ii), it suffices to refer to the discussion preceding the theorem. For (iii), we refer to
\cite{LindoSagitZhum:23}, where it is shown that 
\begin{align*}
F(s,t)\ =\ 1\,-\,\bigg(e^{-(\sfm-1)\nu\theta t}(1-s)^{-\theta}\,+\,\frac{\sfm\theta\nu}{1+\theta}\int_{0}^{t}e^{-(\sfm-1)\nu u}\,du\bigg)^{-1/\theta}
\end{align*}
for all $(s,t)\in [0,1]\times [0,\infty)$. Consequently, the law of each $Z(t)$, $t>0$, is power-fractional, namely
\begin{equation*}
Z(t)\ \eqdist\ \PF\bigg(\theta,e^{-(\sfm-1)\theta\nu t},\frac{\sfm\nu\theta}{\theta+1}\int_{0}^{t}e^{-(\sfm-1)\theta\nu u}\,du\bigg).
\end{equation*}
Finally, this is the same as \eqref{eq:law of power-fractional MBP Z(t)} because, by \eqref{eq:parameter constraint} and the definition of $\sfm$, we have $\sfm\nu\theta=\lambda(\theta+1)$ and $\sfm-1=\rho/\theta\nu$.\qed
\end{proof}

\begin{Rem}\label{rem:power-fractional Yule}\rm
If $(Y(t))_{t\ge 0}$ is chosen as a Yule process, meaning the individual death rate $\mu=0$, it is easily verified that $f(0)=0$. This can be understood intuitively because, in this case, the Markov branching process $(Z(t))_{t\ge 0}$ (in terms of its pgf's $F(\cdot,t)$) is the conjugation of a pure birth process with a distribution that assigns no probability mass at zero. As an immediate  consequence $F(0,t)=0$, which in turn entails that the individual offspring distribution also has no mass at 0. Since $\mu=0$ implies $\rho=\lambda$, we see that \eqref{eq:law of power-fractional MBP Z(t)} simplifies to
\begin{equation}\label{eq:law of power-fractional Yule Z(t)}
Z(t)\ \eqdist\ \PF(\theta,e^{-\lambda t},1-e^{-\lambda t})\ =\ \PF_{+}(\theta,e^{-\lambda t})
\end{equation}
for each $t>0$. We have thus obtained a power-fractional analog of the Yule process.
\end{Rem}

\begin{Rem}\rm
Let $(p_{n})_{n\ge 0}$ denote the probability distribution associated with $f$. Then it follows directly from \eqref{eq:final form of f MBP} that
\begin{equation*}
\frac{1-f(s)}{\sfm(1-s)}\ =\ 1\ -\ \frac{1}{1+\theta}(1-s)^{\theta}\ =\ \frac{1}{\sfm}\sum_{n\ge 0}\Bigg(\sum_{k>n}p_{k}\Bigg)s^{n}.
\end{equation*}
This shows that the normalized \emph{tail measure} associated with $(p_{n})_{n\ge 0}$, given by $(\sfm^{-1}\sum_{k>n}p_{k})_{n\ge 0}$, is the generalized Sibuya distribution $\GSib(\theta,(1+\theta)^{-1})$. Since the probability mass function of this law is known (see \eqref{eq:pmf Sib}), the $p_{n}$'s can be derived from it, resulting in
\begin{equation*}
p_{n}\ =\ \begin{cases} 1-\sfm\theta(1+\theta)^{-1}&\text{if }n=0,\\ 0&\text{if }n=1,\\ \sfm\theta/2&\text{if }n=2,\\
\sfm\theta(1-\theta)\cdots (n-2-\theta)/n!&\text{if }n\ge 3.
\end{cases}
\end{equation*}
Again, this is already stated in \cite{LindoSagitZhum:23}.
\end{Rem}


\section{Auxiliary lemmata about power-fractional laws}

\begin{Lemma}
Let $f$ be the pgf~of $\PF(\theta,\gamma,a,b)$ for $\theta\in [-1,1]\backslash\{0\}$, $\gamma>1$, and $a\in (0,1)$ (see Table \ref{tab:1.1}). Then $f_{\gamma}(s):=f(\gamma s)/\gamma$ is the pgf~of the proper supercritical power-fractional distribution $\PF(\theta,1,a,b\gamma^{\theta})$, which has a unique fixed point at $1/\gamma$ in the interval $(0,1)$.
\end{Lemma}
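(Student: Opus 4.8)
The plan is to use the characterisation of a power-fractional pgf with a fixed exponent by its multiplicative functional equation \eqref{eq2:def PF gf}: it suffices to show that $f_{\gamma}$ satisfies this equation with parameters $(\theta,1,a,b\gamma^{\theta})$, to check admissibility of those parameters, and then to read off properness, supercriticality and the fixed point. First I would note that $1-f_{\gamma}(s)=\gamma^{-1}\big(\gamma-f(\gamma s)\big)$ for $s\in[0,1)$, so that
\begin{equation*}
\frac{1}{(1-f_{\gamma}(s))^{\theta}}\ =\ \frac{\gamma^{\theta}}{(\gamma-f(\gamma s))^{\theta}}.
\end{equation*}
Applying \eqref{eq2:def PF gf} to $f$ at the point $\gamma s\in[0,\gamma)$ and using $(\gamma-\gamma s)^{\theta}=\gamma^{\theta}(1-s)^{\theta}$ then gives
\begin{equation*}
\frac{1}{(1-f_{\gamma}(s))^{\theta}}\ =\ \gamma^{\theta}\left[\frac{a}{\gamma^{\theta}(1-s)^{\theta}}+b\right]\ =\ \frac{a}{(1-s)^{\theta}}+b\gamma^{\theta},
\end{equation*}
which is exactly \eqref{eq2:def PF gf} for $\PF(\theta,1,a,b\gamma^{\theta})$; so, up to admissibility, $f_{\gamma}$ is the pgf of that law.

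For admissibility I would translate the constraint on $b$ in Table \ref{tab:1.1}: for $\theta\in(0,1]$ the left bound $\gamma^{-\theta}\le b/(1-a)$ of (A2) rearranges to $a+b\gamma^{\theta}\ge1$, which is the (A1) constraint for the pair $(a,b\gamma^{\theta})$; for $\theta\in[-1,0)$ the bound $b/(1-a)\le\gamma^{|\theta|}$ of (A3) rearranges to $b\gamma^{\theta}\le1-a$, i.e.\ the (A3) constraint with $\gamma=1$. Moreover, for $\theta>0$ the right-hand side of \eqref{eq2:def PF gf} written for $f_{\gamma}$ diverges as $s\uparrow1$ (since $a/(1-s)^{\theta}\to\infty$), hence $f_{\gamma}(1)=1$ and the law is proper; and since $a\in(0,1)$ its mean is $\sfm=a^{-1/\theta}>1$ by Table \ref{tab:1.3}, so it is supercritical.

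Finally, for the fixed point I would observe that $f_{\gamma}(s)=s$ iff $f(\gamma s)=\gamma s$, and that \eqref{eq2:def PF gf} turns $f(u)=u$ into $(1-a)(\gamma-u)^{-\theta}=b$, whose only solution in $[0,\gamma)$ is $u=\gamma-\big((1-a)/b\big)^{1/\theta}$ — the minimal fixed point $q$ of $f$ in $[0,1]$. In the situation relevant here (Table \ref{tab:1.2}), $b=(1-a)(\gamma-1)^{-\theta}$ (for $\theta<0$: $b=(1-a)(\gamma-1)^{|\theta|}$), so $u=1$, whence $f_{\gamma}$ has the unique fixed point $1/\gamma$, which lies in $(0,1)$ because $\gamma>1$; equivalently one may apply Theorem \ref{thm:supercritical fixed env}(a) to $\PF(\theta,1,a,b\gamma^{\theta})$, whose extinction probability — the unique fixed point of a supercritical pgf in $[0,1)$ — comes out as $1-\big((1-a)/(b\gamma^{\theta})\big)^{1/\theta}=1/\gamma$. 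I do not expect a genuine obstacle here: the substantive step is the one-line identity of the first paragraph, and the only care needed is the sign-of-$\theta$ case split in the admissibility check and in the boundary behaviour at $s=1$, together with keeping track of which value of $b$ from Table \ref{tab:1.1} is in force.
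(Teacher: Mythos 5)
Your proof is correct and follows essentially the same route as the paper: the paper's entire argument is your first display, obtained by multiplying \eqref{eq2:def PF gf} by $\gamma^{\theta}$ (equivalently, evaluating at $\gamma s$), after which it simply declares that the assertions follow. The admissibility, properness/supercriticality and fixed-point checks you spell out (including the sign-of-$\theta$ case split and the role of the boundary value of $b$) are exactly the details the paper leaves implicit.
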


\begin{proof}
It follows from Eq.\,\eqref{eq2:def PF gf} upon multiplication with $\gamma^{\theta}$ that
\begin{align*}
\frac{1}{(1-\gamma^{-1}f(\gamma s))^{\theta}}\ =\ \frac{a}{(1-s)^{\theta}}\,+\,b\gamma^{\theta}
\end{align*}
and this obviously implies the assertions.\qed
\end{proof}

\begin{Lemma}\label{lem:conditional law PF}
Let $X$ be a random variable with law $\PF(\theta,a,b)$ for $\theta\in (0,1]$. Then the conditional law of $X$ given $X>0$ is again power-fractional, namely
\begin{equation*}
\Prob(X\in\cdot|X>0)\ =\ \PF\left(\theta,\frac{a}{a+b},\frac{b}{a+b}\right).
\end{equation*}
\end{Lemma}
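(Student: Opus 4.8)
The plan is to compute the probability generating function of the conditional law directly and recognise it as a power-fractional pgf via the functional equation \eqref{eq2:def PF gf}. Write $f$ for the pgf of $\PF(\theta,a,b)$, so that, with $\gamma=1$, $f(s)=1-[a(1-s)^{-\theta}+b]^{-1/\theta}$ and in particular $p_{0}=f(0)=1-(a+b)^{-1/\theta}$, whence $\Prob(X>0)=(a+b)^{-1/\theta}$. The conditional pgf is then
$$h(s)\,:=\,\Erw(s^{X}\mid X>0)\,=\,\frac{f(s)-f(0)}{1-f(0)},\quad s\in[0,1],$$
so that $1-h(s)=(1-f(s))/(1-f(0))$.

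Next I would raise this to the power $-\theta$ and use \eqref{eq2:def PF gf} for $f$: since $(1-f(s))^{-\theta}=a(1-s)^{-\theta}+b$ and $(1-f(0))^{\theta}=(a+b)^{-1}$, we obtain
$$\frac{1}{(1-h(s))^{\theta}}\,=\,(1-f(0))^{\theta}\,\frac{1}{(1-f(s))^{\theta}}\,=\,\frac{1}{a+b}\left(\frac{a}{(1-s)^{\theta}}+b\right)\,=\,\frac{a/(a+b)}{(1-s)^{\theta}}+\frac{b}{a+b}.$$
This is precisely equation \eqref{eq2:def PF gf} with $\gamma=1$ and parameters $(a/(a+b),\,b/(a+b))$, so $h$ is the pgf of $\PF(\theta,a/(a+b),b/(a+b))$, which is the asserted identity.

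Finally, for completeness I would note that these parameters satisfy the constraints of case (A1): both are positive and their sum equals $1$, so the resulting law is a proper (indeed critical) power-fractional distribution, and the identity $a/(a+b)+b/(a+b)=1$ correctly forces $p_{0}=1-1=0$, consistent with conditioning on $\{X>0\}$. There is no genuine obstacle here; the only point requiring a little care is the elementary manipulation of pulling the constant $(a+b)^{-1/\theta}$ through the $(\,\cdot\,)^{-1/\theta}$, which is exactly why passing through the functional equation \eqref{eq2:def PF gf} rather than the explicit form of $f$ gives the cleanest route.
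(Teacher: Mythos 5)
Your proposal is correct and follows essentially the same route as the paper: form the conditional pgf $h(s)=(f(s)-f(0))/(1-f(0))$ and pass to $(1-h(s))^{-\theta}$ via the functional equation \eqref{eq2:def PF gf} to read off the parameters $(a/(a+b),\,b/(a+b))$. The extra check that these parameters sum to $1$ (so the conditional law is proper with no mass at $0$) is a harmless addition not present in the paper's proof.
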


\begin{proof}
Let $f$ denote the pgf of $X$. Then the pgf~of the conditional law in question is given by
\begin{gather*}
h(s)\ =\ \frac{f(s)-f(0)}{1-f(0)}\ =\ 1\,-\,\frac{1-f(s)}{1-f(0)},
\shortintertext{hence}
\frac{1}{(1-h(s))^{\theta}}\ =\ \left(\frac{1-f(0)}{1-f(s)}\right)^{\theta}\ =\ \frac{1}{a+b}\left[\frac{a}{(1-s)^{\theta}}+b\right],
\end{gather*}
which provides the asserted result.\qed
\end{proof}

It is a well-known fact that when \emph{thinning} a Poisson-distributed set of points by independently tossing a $p$-coin for each point, the number of remaining points follows again a Poisson distribution. More generally, this also holds for the number of points with a given label when each point is \emph{tagged} independently by assigning a label $i$ with probability $p_{i}$, where $i$ belongs to a finite or countably infinite set of labels $\cL$ and $\sum_{i\in\cL}p_{i}=1$. The following lemma shows that this property extends to power-fractional distributions.

\begin{Lemma}[Thinning/Tagging]\label{lem:PF sum of Bernoullis}
Let $X,I_{1},I_{2},\ldots$ be independent random va\-riables, where $X\eqdist\PF(\theta,a,b)$ for $\theta\in (0,1]$, and the $I_{n}$ take values in a finite or countably infinite set of labels $\cL$, with common law $(\gamma_{i})_{i\in\cL}$. Then the law of the random sum $Z(i):=\sum_{k=1}^{X}\1_{\{I_{k}=i\}}$ is also power-fractional, namely $\PF(\theta,a\gamma_{i}^{-\theta},b)$. for each $i\in\cL$.
\end{Lemma}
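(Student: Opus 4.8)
The plan is to compute the probability generating function of $Z(i) = \sum_{k=1}^{X}\1_{\{I_k=i\}}$ via conditioning on $X$ and then recognize the result through the functional equation \eqref{eq2:def PF gf}. First I would note that each indicator $\1_{\{I_k=i\}}$ is a Bernoulli random variable with success probability $\gamma_i$, and these are independent of one another and of $X$. Hence, writing $g$ for the pgf of $\1_{\{I_1=i\}}$, namely $g(s) = 1-\gamma_i+\gamma_i s$, the pgf of $Z(i)$ is the composition $f\circ g$, where $f$ is the pgf of $\PF(\theta,a,b)$. Concretely,
\begin{equation*}
\Erw s^{Z(i)}\ =\ \Erw\big(\Erw[s^{\1_{\{I_1=i\}}}]\big)^{X}\ =\ f\big(1-\gamma_i+\gamma_i s\big),\quad s\in[0,1].
\end{equation*}

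Next I would apply the functional equation \eqref{eq2:def PF gf} (with $\gamma=1$, per the standing assumption), which characterizes $f$ through
\begin{equation*}
\frac{1}{(1-f(t))^{\theta}}\ =\ \frac{a}{(1-t)^{\theta}}+b,\quad t\in[0,1).
\end{equation*}
Substituting $t = 1-\gamma_i+\gamma_i s = 1-\gamma_i(1-s)$, so that $1-t = \gamma_i(1-s)$, I get
\begin{equation*}
\frac{1}{(1-f(1-\gamma_i(1-s)))^{\theta}}\ =\ \frac{a}{\gamma_i^{\theta}(1-s)^{\theta}}+b\ =\ \frac{a\gamma_i^{-\theta}}{(1-s)^{\theta}}+b.
\end{equation*}
This is exactly the functional equation \eqref{eq2:def PF gf} defining the pgf of $\PF(\theta,1,a\gamma_i^{-\theta},b)=\PF(\theta,a\gamma_i^{-\theta},b)$, which yields the claim. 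A small bookkeeping point is to check that the parameters $(a\gamma_i^{-\theta},b)$ satisfy the constraint of case (A1): since $\gamma_i\le 1$ we have $a\gamma_i^{-\theta}\ge a$, hence $a\gamma_i^{-\theta}+b\ge a+b\ge 1$, and both parameters are positive, so we remain within the admissible range (allowing $\gamma_i=1$ trivially, and if some $\gamma_i=0$ the statement is vacuous since then $Z(i)=0$).

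I do not anticipate a genuine obstacle here; the argument is a direct manipulation of the defining functional equation, entirely parallel to the composition computations already carried out in \eqref{eq1:iteration stable general}–\eqref{eq2:iteration stable general} and in the proof of Lemma \ref{lem:conditional law PF}. The only mild subtlety worth a sentence is the interchange giving $\Erw s^{Z(i)} = f(\Erw s^{\1_{\{I_1=i\}}})$, which is the standard pgf-of-a-random-sum identity valid because $X$ and the $I_k$ are independent and the $I_k$ are iid; this is already used repeatedly in the paper (e.g.\ in \eqref{eq3:iteration stable general}) and needs no further comment. The proof can therefore be written in just a few lines.
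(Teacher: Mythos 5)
Your proposal is correct and follows essentially the same route as the paper: compose the pgf $f$ of $\PF(\theta,a,b)$ with the Bernoulli pgf $g_{i}(s)=\gamma_{i}s+1-\gamma_{i}$ and read off the result from the functional equation \eqref{eq2:def PF gf}. The brief check that $(a\gamma_{i}^{-\theta},b)$ stays within case (A1) is a harmless extra the paper leaves implicit.
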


\begin{proof}
Since the law of $\1_{\{I_{k}=i\}}$ is Bernoulli with parameter $\gamma_{i}$ and has the pgf $g_{i}(s)=\gamma_{i}s+1-\gamma_{i}$, we infer that $Z(i)$ has the pgf $f\circ g_{i}$, which satisfies
\begin{align*}
\frac{1}{(1-f(g_{i}(s)))^{\theta}}\ =\ \frac{a}{(1-g_{i}(s))^{\theta}}\,+\,b\ =\ \frac{a\gamma_{i}^{-\theta}}{(1-s)^{\theta}}\,+\,b.
\end{align*}
Hence, $Z(i)$ has the asserted power-fractional law.\qed
\end{proof}

The lemma gives rise to some additional observations that finally result in the definition of \emph{$m$-dimensional power-fractional distributions} for $m\in\N\cup\{\infty\}$. 
\begin{itemize}\itemsep3pt
\item[(a)] One can combine any subset $B$ of labels from the set $\cL$ into a single label, yielding
$$ Z(B)\,:=\,\sum_{i\in B}Z(i)\,=\,\sum_{k=1}^{X}\1_{B}(I_{k}), $$
and it can be readily verified that
\begin{equation}\label{eq:law of Z(B)}
Z(B)\ \eqdist\ \PF\bigg(\theta,\frac{a}{\gamma(B)^{\theta}},b\bigg),
\end{equation}
where $\gamma(B):=\sum_{i\in B}\gamma_{i}$.
\item[(b)] One can interpret $Z$ as a random point measure, or equivalently, as a point process on the label set $\cL$, by writing $Z=\sum_{k=1}^{X}\delta_{I_{k}}$. As stated in \eqref{eq:law of Z(B)}, $Z(B)$ has a power-fractional distribution for every $B\subset\cL$. However, unlike the Poisson case, the random variables $Z(B_{1}),\ldots,Z(B_{n})$ for pairwise disjoint subsets $B_{i}$ are generally not independent. In particular, the $Z(i)$ for $i\in\cL$ are typically not independent.
\item[(c)] Finally, we note that the conditional law of $(Z(i))_{i\in\cL}$ given $Z(\cL)=z\in\N$ is multinomial with parameters $z$ and $\gamma_{i},\,i\in\cL$, i.e.,
\begin{equation*}
\Prob(Z(i)=z_{i},\,i\in\cL|Z(\cL)=z)\ =\ \frac{z!}{\prod_{i\in\cL}z_{i}!}\prod_{i\in\cL}\gamma_{i}^{z_{i}}
\end{equation*}
for each $(z_{i})_{i\in\cL}\in\N_{0}^{\cL}$ such that $\sum_{i\in\cL}z_{i}=z$.
\end{itemize}

After these observations, the unique law of the random vector $(Z(i))_{i\in\cL}$ can be referred to as the \emph{$m$-dimensional power-fractional distribution with parameters $\theta,a,b$ and $(\gamma_{i})_{i\in\cL}$}, abbreviated as $\PF_{m}(\theta,a,b,(\gamma_{i})_{i\in\cL})$, where $m=|\cL|$. It is of particular interest in connection with multitype power-fractional \GWP's that will be discussed in a future paper.


\nocite{LindoSagitZhum:23}
\nocite{SagitovZhumayev:24}

\section*{\ackname}
Gerold Alsmeyer acknowledges funding from the German Science Foundation (DFG) under Germany's Excellence Strategy EXC 2044--390685587, Mathematics M\"unster: Dynamics--Geometry--Structure. He also wishes to thank the organizers of the conference on branching processes and their applications in Badajoz (Spain) in April 2024 for providing the opportunity to present a significant portion of this work. Last but not least, he would like to express his gratitude to Bastien Mallein and Serik Sagitov for insightful comments and discussions on the subject.
Viet Hung Hoang gratefully acknowledges funding from the European Research Council (ERC) under the European Union's Horizon 2020 research and innovation programme (Grant Agreement No. 759702) and from the University of M\"unster. He also thanks the Industrial University of Ho Chi Minh City for providing a supportive environment to complete the project.

%
%

\bibliographystyle{abbrv}
\bibliography{StoPro}

\end{document}